\newcommand{\BA}{{\mathbb{A}}}
\newcommand{\BC}{{\mathbb {C}}}
\newcommand{\BQ}{{\mathbb {Q}}}
\newcommand{\BR}{{\mathbb {R}}}
\newcommand{\BZ}{{\mathbb {Z}}}
\newcommand{\CO}{{\mathcal {O}}}
\DeclareMathOperator{\absNorm}{\mathfrak{N}}
\newcommand{\Aut}{{\mathrm{Aut}}}
\newcommand{\End}{{\mathrm{End}}}
\newcommand{\Gal}{{\mathrm{Gal}}}
\newcommand{\GL}{{\mathrm{GL}}}
\newcommand{\Gm}{{\mathbb{G}_m}}
\newcommand{\Hom}{{\mathrm{Hom}}}
\newcommand{\Ker}{{\mathrm{Ker}}}
\newcommand{\rank}{{\mathrm{rank}}}
\newcommand{\res}{{\mathrm{res}}}
\newcommand{\SL}{{\mathrm{SL}}}
\newcommand{\GO}{{\mathrm{GO}}}
\newcommand{\GU}{{\mathrm{GU}}}
\newcommand{\norm}[1]{\|{#1}\|}
\newcommand{\ad}{\operatorname{ad}}
\newcommand{\der}{\operatorname{der}}
\newcommand{\diag}{\operatorname{diag}}
\newcommand{\sgn}{\operatorname{sgn}}
\newcommand{\oH}{\operatorname{H}}
\newcommand{\oO}{\operatorname{O}}
\newcommand{\oU}{\operatorname{U}}
\newcommand{\g}{\mathfrak g}
\renewcommand{\k}{\mathfrak k}
\newcommand{\h}{\mathfrak h}
\renewcommand{\c}{\mathfrak c}
\newcommand{\f}{\mathfrak f}
\newcommand{\s}{\mathfrak s}
\newcommand{\fN}{\mathfrak N}
\newcommand{\Z}{\mathbb{Z}}
\newcommand{\C}{\mathbb{C}}
\newcommand{\R}{\mathbb R}
\newcommand{\A}{\mathbb{A}}
\newcommand{\OO}{\mathcal O}
\newcommand{\abs}[1]{\lvert#1\rvert}
\newcommand{\be}{\begin {equation}}
\newcommand{\ee}{\end {equation}}
\newcommand{\bee}{\begin {equation*}}
\newcommand{\eee}{\end {equation*}}
\theoremstyle{Theorem}
\theoremstyle{Theorem}
\newtheorem{introconjecture}{Conjecture}
\newtheorem{introtheorem}[introconjecture]{Theorem}
\newtheorem{thm}{Theorem}[section]
\theoremstyle{Theorem}
\newtheorem{lem}{Lemma}[section]
\theoremstyle{Theorem}
\newtheorem{prp}{Proposition}[section]
\newtheorem{corp}[prp]{Corollary}
\theoremstyle{Plain}
\theoremstyle{Definition}
\begin{document}

\title[$p$-adic $L$-functions]{$p$-adic $L$-functions for Rankin-Selberg convolutions over number fields}

\author{Fabian Januszewski}

\address{Institut f\"ur Algebra und Geometrie, Fakult\"at f\"ur Mathematik,
Karlsruher Institut f\"ur Technologie (KIT), Germany} \email{januszewski@kit.edu}

\subjclass[2000]{11F67, 11F66, 11R23} \keywords{Cohomological representation, Rankin-Selberg convolution, Critical value, $L$-function, $p$-adic $L$-function}


\begin{abstract}
We unconditionally construct cyclotomic $p$-adic $L$-functions for Rankin-Selberg convolutions for $\GL(n+1)\times \GL(n)$ over arbitrary number fields, and show that they satisfy an expected functional equation.
\end{abstract}

\maketitle

\tableofcontents

\section{Introduction}\label{s1}

In modern number theory $p$-adic $L$-functions play a central role, as they tend to reflect arithmetic properties more directly than their complex counterparts. Since the discovery of the $p$-adic Riemann $\zeta$-function by Kubota and Leopoldt in the 1960's, their deep arithmetic significance became apparent in Iwasawa's work on cyclotomic fields, which culminated in Mazur-Wiles' proof of the cyclotomic Iwasawa main conjecture.

Since then $p$-adic methods are omnipresent in modern number theory, and the construction of $p$-adic $L$-functions has been an important and in general unsolved problem. Even worse, being an arithmetic invariant, all we can do is construct $p$-adic $L$-functions for automorphic representations. Due to the lack of an Euler product in the $p$-adic world, we do not have a direct construction for motives.



A very successful method for constructing $p$-adic $L$-functions is via Mazur's modular symbols. They exist in purely topological contexts and behave very well, but their non-vanishing is an unsolved problem in general.

In this article we study a modular symbol on $\GL(n+1)\times \GL(n)$ computing the special values of the twisted Rankin-Selberg $L$-functions $L(s,(\pi\times\sigma)\otimes\chi)$ for regular algebraic cuspidal automorphic representations $\pi$ and $\sigma$ on $\GL(n+1)$ resp.\ $\GL(n)$. We allow an arbitrary number field as the base field. Thanks to results of Sun \cite{Sun}, the non-vanishing hypothesis in our setting has been proved. In particular our results are unconditional. Finally we apply this to the construction of $p$-adic $L$-functions and show that they always exist in the finite slope situation, and also satisfy a functional equation.

By recent results of Harris-Lan-Taylor-Thorne \cite{HLTT2013} and independently of Scholze \cite{scholze2013}, we know that whenever $k$ is totally real or a CM field, that we may associate to $\pi$ and $\sigma$ compatible $\ell$-adic systems of Galois representations $\rho_{\pi,\ell}$ resp.\ $\rho_{\sigma,\ell}$ with the property that
\begin{equation}\label{eq:slid}
L(s,(\pi\times\sigma)\otimes\chi)\;=_S\;
L(s-\frac{1-n(n+1)}{2},(\rho_{\pi,\ell}\otimes\rho_{\sigma,\ell})_\ell\otimes\chi),
\end{equation}
where the notation $=_S$ suggests that this identity holds for all but finitely many Euler factors. So far it is yet unkown if these Galois representations are de Rham at $\ell=p$ or even motivic, but this is to be expected. In the same spirit \eqref{eq:slid} is expected to be a strict identity. Despite these open questions the existence of these Galois representations gives a strong hint that our construction eventually bears arithmetic significance.

Our main Theorem is the following. Let $k$ be a number field and let $p$ be a rational prime. Let $\pi$ and $\sigma$ be regular algebraic irreducible cuspidal automorphic representations of $\GL_{n+1}(\BA_k)$ and $\GL_{n}(\BA_k)$, respectively. Then it is well known that the finite components $\pi^{(\infty)}$ and $\sigma^{(\infty)}$ are defined over a number field $\BQ(\pi,\sigma)$ (cf.\ \cite[Th\'eor\`eme 3.13]{Clo}; this statement extends to the global represenations in an appropriate sense \cite{Janpre}).
Define the compact abelian $p$-adic Lie group 
$$
  C(p^\infty) \;:=\;
\varprojlim_n 
k^\times\backslash\BA_k^\times/
(k\otimes_\BQ\BR)^0
(1+p^n\CO_k\otimes_\BZ\BZ_p)
\prod\limits_{v\nmid p\infty}\CO_{k,v}^\times
$$
which by class field theory is naturally isomorphic to the Galois group $\Gal(k_{p^\infty}/k)$ of the maximal abelian extension $k_{p^\infty}$ of $k$ unramified outside $p\infty$. Then, assuming that the weights of $\pi$ and $\sigma$ are compatible, and that $\pi$ and $\sigma$ are $p$-ordinary, or more generally of finite slope at $p$, we have
\begin{introtheorem}
For each $s_{\rm crit}=\frac{1}{2}+j$  ($j\in \Z$) critical for $L(s,\pi\times\sigma)$ there exist periods $\Omega_j^{\pm}\in\BC^\times$ and a $\BQ(\pi,\sigma)$-valued $p$-adic measure $\mu_j$ (a locally analytic distribution in the finite slope case) on $C(p^\infty)$, such that for all finite order characters $\chi:C(p^\infty)\to \overline{\BQ}^\times\subset \C^\times$,
$$
\int_{C(p^\infty)}\chi d\mu_j\;=\;
c(\chi,s_{\rm crit})\cdot\frac{L^{(p)}(s_{\rm crit},(\pi\times\sigma)\otimes\chi)}{\Omega_j^{\sgn\chi}},
$$
where
$$
c(\chi,s_{\rm crit})\;=\;G(\chi)^{\frac{(n+1)n}{2}}\cdot\prod_{\mathfrak{p}\mid p}c(\chi_{\mathfrak{p}},s_{\rm crit})
$$
and
$$
c(\chi_{\mathfrak{p}},s_{\rm crit})\;=\;
\begin{cases}
I_{\mathfrak{p}}(\pi,\sigma,t_{\mathfrak{p}},\chi,s_{\rm crit}),
&\text{for unramified $\chi_{\mathfrak{p}}$},\\
\fN(\f_{\chi_{\mathfrak{p}}})^
{\frac{(n+1)n(s_{\rm min}-s_{\rm crit})}{2}-\frac{(n+1)n(n-1)}{6}}
\cdot \kappa_{\mathfrak{p}}^{v_{\mathfrak{p}}(\f_{\chi_\mathfrak{p}})},
&\text{for $\chi_{\mathfrak{p}}$ of conductor $\f_{\chi_{\mathfrak{p}}}\neq 1$}.
\end{cases}
$$
\end{introtheorem}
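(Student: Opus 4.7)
The plan is to follow the modular-symbol-and-interpolation paradigm developed in lower rank by Mazur, Manin, Kazhdan--Mazur--Schmidt, Mahnkopf, Dimitrov and Raghuram--Shahidi among others, and to adapt it to the Rankin--Selberg convolution for $\GL(n+1)\times\GL(n)$ over an arbitrary number field $k$.

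First I would express the critical values $L(s_{\rm crit},(\pi\times\sigma)\otimes\chi)$ cohomologically. The Jacquet--Piatetski-Shapiro--Shalika integral representation writes the global $L$-function, up to archimedean and ramified local factors, as an integral over $\GL_n(k)\backslash \GL_n(\BA_k)$ of a cusp form in $\pi$ restricted via the standard embedding $\GL_n\hookrightarrow \GL_{n+1}$ against a cusp form in $\sigma$, twisted by $\chi\cdot|\det|^{s-1/2}$. Because $\pi$ and $\sigma$ are regular algebraic with compatible weights, their archimedean components contribute classes in cuspidal cohomology which cup together in the bottom degree of the locally symmetric space for $\GL_n$. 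The Rankin--Selberg integral then becomes the evaluation of this cohomology class against a (relative) fundamental class of this symmetric space --- a modular symbol in the sense of Mazur.

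The next step is to fix a rational structure. By Clozel, $\pi^{(\infty)}$ and $\sigma^{(\infty)}$ admit models over $\BQ(\pi,\sigma)$; combining this with the standard comparison between Betti and $(\mathfrak{g},K_\infty)$-cohomology and with Sun's non-vanishing result \cite{Sun} for the archimedean pairing, I would define periods $\Omega_j^{\pm}\in\BC^\times$ --- one per sign component of $(k\otimes_\BQ\BR)^\times$ --- so that the modular-symbol-class becomes $\BQ(\pi,\sigma)$-rational. Applying this to the twists by finite order characters $\chi$ of $C(p^\infty)$ and using the distinguished refinement provided by the $U_p$-operator (which exists thanks to the finite slope hypothesis), I would obtain a compatible system of $\BQ(\pi,\sigma)$-valued quantities indexed by the open subgroups of $C(p^\infty)$. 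The Amice--V\'elu / Vi\v{s}ik admissibility criterion then realizes this system as a locally analytic distribution $\mu_j$ on $C(p^\infty)$, or, in the $p$-ordinary case, as an honest bounded measure.

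The hard part is the explicit local computation at primes $\mathfrak{p}\mid p$. For unramified $\chi_\mathfrak{p}$, the local zeta integral evaluated on the $U_p$-refined new vector must be shown to equal $I_\mathfrak{p}(\pi,\sigma,t_\mathfrak{p},\chi,s_{\rm crit})$, with $t_\mathfrak{p}$ the $U_p$-eigenvalue. For $\chi_\mathfrak{p}$ of conductor $\f_{\chi_\mathfrak{p}}\neq 1$, I would show that after sufficiently many applications of $U_p$ the local integral concentrates on a single Iwahori orbit and contributes precisely the factor $\fN(\f_{\chi_\mathfrak{p}})^{(n+1)n(s_{\rm min}-s_{\rm crit})/2-(n+1)n(n-1)/6}\cdot \kappa_\mathfrak{p}^{v_\mathfrak{p}(\f_{\chi_\mathfrak{p}})}$; the global Gauss sum factor $G(\chi)^{(n+1)n/2}$ then assembles from a careful tracking of local Gauss sums across the ramified places. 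The principal obstacle is uniformity over an arbitrary number field: the combinatorics of the Bruhat decomposition and the interaction between the different places above $p$ are substantially more delicate than in the case $k=\BQ$, and controlling them simultaneously requires a choice of test vectors at each $\mathfrak{p}\mid p$ adapted to the mirabolic-Iwahori / Shalika--Whittaker structure, together with a careful analysis of the action of $U_p$ on the relevant Jacquet modules.
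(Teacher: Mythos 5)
Your outline is essentially the paper's strategy: the critical values are realized via the Jacquet--Piatetski-Shapiro--Shalika integral as the evaluation of a cuspidal cohomology class, translated by explicit elements at $p$ and pulled back along the diagonal $\GL_n\hookrightarrow\GL_{n+1}\times\GL_n$, against a fundamental class; rationality comes from Clozel's models over $\BQ(\pi,\sigma)$, the periods $\Omega_j^{\pm}$ from the comparison at infinity, their non-vanishing from Sun's archimedean result; the $U_p$-eigen refinement gives the distribution relation; and the ramified computation at $\mathfrak{p}\mid p$ is the local Birch lemma producing $\fN(\f_{\chi_{\mathfrak{p}}})^{\cdots}\kappa_{\mathfrak{p}}^{v_{\mathfrak{p}}(\f_{\chi_{\mathfrak{p}}})}$ and the Gauss sums (the paper imports this from \cite{Jan14}, Theorem~\ref{thm:localbirch}, rather than re-deriving it). The one genuinely different ingredient is your appeal to Amice--V\'elu/Vi\v{s}ik admissibility: the paper never uses it. Instead it proves the distribution property directly from the matrix identity of Lemma~\ref{lem:distributionmatrixrelation} together with the invariance properties of the modular symbol, and obtains boundedness (ordinary case) or an explicit order bound (finite slope) from integral structures on the coefficient sheaves and Hida's normalized operator $\tilde U_p$, via a highest-weight/lattice argument (Theorem~\ref{thm:boundedness}); note that to run Amice--V\'elu you would still need exactly this kind of integral growth estimate, so your route does not avoid the lattice argument, it presupposes it. Two further points where your sketch underestimates where the actual work lies over a general $k$: the delicate bookkeeping is less the Bruhat combinatorics at $p$ (which is place-by-place and identical to the totally real case) than the class-group/connected-component formalism (the spaces $C(K(\varpi^v))$, the pullback Propositions~\ref{prop:pullbacks}--\ref{cor:pullbacks}, and the fundamental-class compatibilities), plus the projection $\xi_j$ of the $M_{\boldsymbol{\mu}\times\boldsymbol{\nu}}$-valued distribution onto its one-dimensional critical quotients, which at complex places is not surjective onto everything (the ``phantom components''). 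Finally, a small slip: $t_{\mathfrak{p}}$ is the $U_{\mathfrak{p}}$-eigen\emph{vector} entering the unramified zeta integral $I_{\mathfrak{p}}$, while $\kappa_{\mathfrak{p}}$ is the eigenvalue appearing in the ramified factor.
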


We remark that our Theorem contains as a particular case a construction of the $p$-adic $L$-functions for $\GL(2)$, valid over any number field. We also remark that there is no restriction on the prime $p$, the oddest prime $2$ is included.

In the Theorem $L^{(p)}(s,\pi\times\sigma)$ denotes the Rankin-Selberg $L$-function with the $p$-Euler factors removed, $s_{\min}=\frac{1}{2}+j_{\min}$ is the left most critical value, and $I_{\mathfrak{p}}(\pi,\sigma,t_{\mathfrak{p}},\chi,s_{\rm crit})$ is a certain local zeta-integral at $\mathfrak{p}\mid p$ whose evaluation is known for $n\leq 2$ by well known computations in the case $n=1$, for $n=2$ by recent yet unpublished work of Ungemach, i.e.\ it agrees with the motivically expected value (cf.\ \cite{coatesperrinriou1989,perrinriou1995}). As the notation suggests, the resulting measure $\mu_j$ depends on choices of Hecke roots at all $\mathfrak{p}\mid p$ for which we find a suitable $U_{\mathfrak{p}}$-eigen vector with eigen value $\kappa_{\mathfrak{p}}$. For details we refer to section \ref{sec:maintheorem}, in particular to our Main Theorem \ref{main:padicl}.

We also prove that the $p$-adic $L$-function we construct statisfies a functional equation. Denote by
$$
\iota:g\mapsto w_{n}g^{-{\rm t}}w_{n}
$$
the twisted main involution of $\GL_{n}$, where the superscript \lq$\,-{\rm t}\,$\rq{} denotes matrix inversion composed with transpose, yielding an outer automorphism of $\GL_{n}$ order $2$. Furthermore $w_n$ is a representative of the long Weyl element for $\GL_n$. Then pullback along $\iota$ sends $\pi$ to its contragredient representation $\pi^\vee$, and a cohomological $U_\mathfrak{p}$-eigen vector $\lambda$ to a cohomological $U_\mathfrak{p}$-eigen vector $\lambda^\vee$. Similarly we have the involution
$$
x\mapsto x^\vee:=(-1)^nx^{-1}
$$
on $C(p^\infty)$. Now the measures $\mu_j$ are constructed out of a specific choice for $\lambda$, and, emphasizing this dependence in the notation, we show in Theorem \ref{thm:functionalequation},
\begin{introtheorem}
We have the functional equation
$$
\mu_{\lambda,j}(x)=
\mu_{\lambda^\vee,-j}(x^{\vee}).
$$
\end{introtheorem}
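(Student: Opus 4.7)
\medskip

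\noindent\textbf{Proof proposal.} The plan is to verify the identity $\mu_{\lambda,j}=\iota_*\mu_{\lambda^\vee,-j}$ of $p$-adic measures on $C(p^\infty)$ by testing against a dense set of continuous functions. Concretely, since the locally constant characters (i.e.\ the finite order characters $\chi$) are dense in the space of continuous functions on $C(p^\infty)$, it suffices to establish the equality
$$
\int_{C(p^\infty)}\chi(x)\,d\mu_{\lambda,j}(x)\;=\;\int_{C(p^\infty)}\chi(x^\vee)\,d\mu_{\lambda^\vee,-j}(x)
$$
for every finite order character $\chi$. The right-hand side is, by the interpolation property of $\mu_{\lambda^\vee,-j}$, equal to
$$
c(\chi^\vee,s_{\rm crit}^\vee)\cdot\frac{L^{(p)}(s_{\rm crit}^\vee,(\pi^\vee\times\sigma^\vee)\otimes\chi^\vee)}{\Omega_{-j,\vee}^{\sgn\chi^\vee}},
$$
where $\chi^\vee(x):=\chi(x^\vee)=\chi((-1)^n)\chi(x)^{-1}$ and $s_{\rm crit}^\vee=\frac{1}{2}-j$.

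\medskip

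\noindent The second step is to apply the classical Rankin--Selberg functional equation for $L(s,(\pi\times\sigma)\otimes\chi)$ at $s=s_{\rm crit}$, relating it to $L(1-s_{\rm crit},(\pi^\vee\times\sigma^\vee)\otimes\chi^{-1})$ through the global $\epsilon$-factor, and to then remove the Euler factors at $p$ on both sides. Since $\iota$ sends the Satake/Langlands parameters of $\pi$ and $\sigma$ at each place $v$ to those of $\pi^\vee$ and $\sigma^\vee$, the local $L$-factors match after translation. The crux is then to verify the identity of archimedean, ramified-finite, and $p$-adic correction factors: the Gauss-sum powers $G(\chi)^{n(n+1)/2}$ together with the local $c(\chi_{\mathfrak p},s_{\rm crit})$-terms at $\mathfrak p\mid p$ must reassemble, under the substitution $\chi\mapsto\chi^\vee$ and $j\mapsto -j$, into the corresponding factors associated with the contragredient data. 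This is a matter of tracking conductors, the normalisation $\fN(\f_{\chi_{\mathfrak p}})$-exponents, and the Hecke eigenvalues $\kappa_{\mathfrak p}$, which transform in a controlled way under $\iota$ because the local $U_{\mathfrak p}$-operator is intertwined with its contragredient counterpart through the twisted main involution.

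\medskip

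\noindent The final and most delicate step is to match the periods $\Omega_j^{\pm}$ with $\Omega_{-j,\vee}^{\pm\sgn\chi((-1)^n)}$. The periods arise as comparison constants between a rational cohomology class attached to $\lambda$ and the transcendental Whittaker/archimedean normalisation used in the zeta-integral. Since $\lambda^\vee$ is by construction the pullback of $\lambda$ along $\iota$, the functoriality of cuspidal cohomology under $\iota$ identifies the two rational structures, while the archimedean side is governed by the archimedean component of the functional equation and by Sun's non-vanishing, which is $\iota$-equivariant. This forces the periods to agree up to a controlled sign depending on $\sgn\chi$ versus $\sgn\chi^\vee$, and this sign is precisely absorbed into the extra factor $\chi((-1)^n)$ appearing in $\chi^\vee$. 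Combining these three ingredients yields the asserted equality of integrals for every finite order $\chi$, and hence the functional equation of measures. The main obstacle I expect is the third step, namely the careful comparison of periods and signs on the contragredient side; the local epsilon bookkeeping at ramified finite places and at $p$ is intricate but amounts to elementary manipulations of the explicit formulas for $c(\chi_{\mathfrak p},s_{\rm crit})$ proved earlier in the paper.
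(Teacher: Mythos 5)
Your route (interpolate at finite-order characters, then invoke the complex Rankin--Selberg functional equation) is not the paper's argument, and it has genuine gaps that the paper's proof is specifically designed to avoid. The paper proves the functional equation \emph{directly at the level of the cohomological construction}: the identity $(\xi_j(\tilde\mu_\lambda(x)))^\vee=\xi_{-j}(\tilde\mu_{\lambda^\vee}(x^\vee))$ is obtained by tracking the twisted main involution $\iota$ through the modular symbol $\mathscr P^{K,K'}_{h,h',c}$, the matrices $h^{(1)}$, $t_{(p)}$, the Hecke operators and the pairing of the coefficient modules $M_{\boldsymbol{\mu}\times\boldsymbol{\nu}}$ and $M_{\boldsymbol{\mu}^\vee\times\boldsymbol{\nu}^\vee}$ (mutatis mutandis Theorem 5.4 of \cite{Jan14}). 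In particular it is an identity of the distribution values themselves, valid in the general finite slope case, and it involves no $L$-values, no $\varepsilon$-factors and no periods.

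Two concrete problems with your plan. First, the reduction to finite-order characters only works for bounded measures: in the finite slope (non-ordinary) case $\mu_{\lambda,j}$ is merely a locally analytic distribution, and such a distribution is not determined by its integrals against locally constant characters (nor is integration continuous for the sup norm), so your density argument does not establish the asserted identity of distributions; it also proves, at best, a weaker statement than the paper's exact identity of values $\mu_\lambda(x+\mathfrak f)$. Second, and more seriously, your third step assumes precisely what would have to be proved: a period relation between $\Omega_j^{\pm}$ for $(\pi,\sigma)$ and the periods for the contragredient datum $(\pi^\vee,\sigma^\vee)$, compatible with the global root number, the archimedean $L$- and $\varepsilon$-factors, and (for $\chi_{\mathfrak p}$ unramified) the uncomputed local zeta integrals $I_{\mathfrak p}$ for $n>2$. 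Appealing to ``functoriality of cuspidal cohomology under $\iota$'' and the $\iota$-equivariance of Sun's non-vanishing does not produce this relation; the clean statement of the theorem, with no $\varepsilon$-factor appearing, is exactly the point of bypassing the complex functional equation altogether. As written, the proposal therefore does not close.
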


We point out an intruiging phenomenon which arises whenever $k$ has complex places. Departing from $\pi$ and $\sigma$, or more generally from a $U_p$-eigen class in cohomology, our construction produces a measure (resp.\ distribution) $\tilde{\mu}$ with values in $M^\Gamma$, where $M$ is the irreducible rational $\GL(n+1)\times\GL(n)$-module corresponding to the weight of the regular algebraic representation $\pi\widehat{\otimes}\sigma$, and $\Gamma$ is an arithmetic subgroup of the diagonally embedded $\GL(n)$ in $\GL(n+1)\times\GL(n)$. In order to obtain $\mu_j$, we project $\tilde{\mu}$ onto one-dimensional quotients of $M^\Gamma$. Now if $k$ is totally real, the dimensiona of $M^\Gamma$ coincides with the number of critical places. However if $k$ has a complex place, there are weights for which $\dim M^\Gamma$ is {\em strictly larger} than the number of critical places. It seems unclear so far if these \lq{}phantom components\rq{} of $\tilde{\mu}$, which do not correspond to critical values, hold arithmetic information not covered by the projections $\mu_j$. So far we only know that $\tilde{\mu}$ behaves as nicely as we may expect: It is $p$-adically bounded in the ordinary case, and satisfies a natural functional equation. We hope to return to this question in the future.

We give a quick sketch of the history of the constructions of $p$-adic $L$-functions for $\GL(n+1)\times \GL(n)$. Previously the case of totally real $k$ was settled in \cite{Jan14}, where a less natural condition of ordinarity was formulated, and where the author restricted his attention to a single place. Previously the case of trivial coefficients over an arbitary number field had been treated in \cite{Jan11}. This approach generalized and complemented a construction over $\BQ$ due to Kazhdan, Mazur and Schmidt, who constructed a distribution (but not a measure) in \cite{KMS}, building on previous work of Schmidt \cite{Sch}, who had constructed $p$-adic $L$-functions for regular algebraic representations on $\GL(3)\times\GL(2)$ with trivial central character at infinity. The first general construction of a $p$-adic measure on $\GL(n+1)\times\GL(n)$ over $\BQ$ is Schmidt's complement \cite{Sch2} to \cite{KMS}, which still imposed a trivial central character at infinity and had restrictions on $p$, depending on $n$, in particular excluding small primes. The latter restriction was once and for all removed in \cite{Jan11}, and the correct formulation for arbitrary class numbers for totally real $k$ was given in \cite{Jan14}, which also overcame the restriction of the central character at infinity.

The non-vanishing of the periods is classical in the case $n=1$, which over $\BQ$ corresponds to the case of modular forms, where it is due to Hecke \cite{Hecke, Hecke2, Hecke3}. The case $n=2$ was settled in \cite{Sch} in the case of trivial coefficients over $\BQ$, and for arbitrary coefficients over $\BQ$ in \cite{KS}, which also implies the non-vanishing for totally real fields in this case. The case of general $n$ and general fields with general coefficients was recently proven by Sun \cite{Sun}.

The outline of the paper is as follows. In section \ref{sec:notation} we collect essential notions and notations that will be used continuously throughout the paper. In section \ref{sec:arithmeticmodules} we define our coefficient systems, in section \ref{sec:maintheorem} we state our main theorem in full generality. In sections \ref{sec:distribution} and \ref{sec:padicl} we construct the $p$-adic $L$-function and prove its claimed properties. In section \ref{sec:cohreps} we show that the non-vanishing problem can be treated place by place, which reduces us to two local cases: real and complex places, which allows us to invoke to Sun's result.

By the nature of the problem, there is necessarily some overlap with \cite{Jan14} in our exposition. We use this occasion to simplify the notation and the treatment of the construction of the distribution, and refer to loc.\ cit.\ if technical results are involved which are either already stated in appropriate generality or whose prove generalizes to the situation treated here without change.

The author thanks Binyong Sun for his insistence on simplifications, which significantly contributed to the present form of the article. The author also thanks Miriam Schwab for many corrections.

\section{Notation}\label{sec:notation}

Throughout the paper $k$ denotes a fixed number field and $\OO_k\subseteq k$ its ring of integers. We write $S_\infty$ for the set of archimedean places of $k$ and identify elements of it with conjugacy classes of field embeddings $\iota:k\to\BC$, via the action of complex conjugation on the codomain. In this spirit we denote by $\overline{\iota}$ the complex conjugate of an embedding $\iota$. By abuse of notation we sometimes identify $\iota$ with its class $\{\iota,\overline{\iota}\}$. We decompose $S_\infty=S_{\rm real}\cup S_{\rm cplx}$ disjointly into the sets of real resp.\ complex places.

We write $\BA_k$ for the adele group over $k$, and for a finite set $S$ of places of $k$ we denote by $\BA_k^{(S)}$ the adeles where the finitely many components corresponding to places in $S$ have been removed (or are identically $1$ or $0$, depending on the context). We apply same notation to ideles and write $\infty$ for the set of infinite places of $k$. Thus $\BA_k^{(\infty)}$ denotes the ring of {\em finite} adeles.

We fix an algebraic closure $\overline{\BQ}/\BQ$ and assume that $k\subseteq\overline{\BQ}$. We fix embeddings $\iota_\infty:\overline{\BQ}\to\BC$ and $\iota_p:\overline{\BQ}\to\overline{\BQ}_p$, the latter denoting an algebraic closure of $\BQ_p$, and $p$ is a fixed rational prime throughout the paper. We emphasize that we do {\em not} exclude any small primes.

If $l/\BQ$ is an extension with ring of integers $\OO_l$, we let $\OO_{l,(p)}=\OO_l\otimes_\BZ \BZ_{(p)}$ denote the localization of $\OO_l$ at $p$, i.e.\ the subring of $p$-integral elements in $l$.

 For a linear algebraic group $G$ over a field $l$, we write $X(G)$ for the group of characters of $G$, i.e.\ homorphisms $G\to\Gm$, defined over a separable algebraic closure of $l$. If $m/l$ is an extension, let $X_m(G)$ denote the characters defined over $m$.

We write $G_n := \res_{\OO_k/\BZ}\GL_{n}$ for the restriction of scalars of the group scheme $\GL_{n}/\OO_k$. We fix the standard maximal torus $T_n$ in $\GL_n$ and consider all root data for $\GL_n$ resp.\ $G_n$ with respect to $T_n$ resp.\ $\res_{k/\BQ}T_n$ and the ordering induced by the standard Borel subgroup $B_n=T_nN_n$ resp.\ $\res_{k/\BQ}B_n$. We make the usual standard choice of simple roots, write $W(\GL_n,T_n)$ for the absolute Weyl group, $w_n\in W(\GL_n,T_n)$ for the longest element with respect to our choice of positive system, and we choose as basis of characters the component projections $\chi_j:T_n\to\Gm$, $(t_i)\mapsto t_j$. We write $X(H)$ for the character group of a group scheme $H$ and use the basis $\chi_1,\dots,\chi_n$ to fix an isomorphism $X(T_n)\cong\BZ^n$ once and for all.

Every weight $\boldsymbol{\mu}\in X(\res_{k/\BQ}T_n)$ may be naturally identified with a tuple
$$
\boldsymbol{\mu}\;=\;(\mu_\iota)_{\iota\in\Hom(k,\BC)}
$$
where $\mu_\iota\in X(T_n)$. Identifying a place $v\in S_\infty$ with a set of embeddings we set
$
\mu_v:=(\mu_\iota)_{\iota\in v}
$
By the very definition of the restriction of scalars the Galois action on $X(\res_{k/\BQ}T_n)$ is given by
$$
\boldsymbol{\mu}^\sigma\;=\;(\mu_{\sigma^{-1}\iota})_{\iota\in \Hom(k,\BC)}
$$
for $\sigma\in\Aut(\BC/\BQ)$, and the irreducible representation $(\rho_{\boldsymbol{\mu}},M_{\boldsymbol{\mu}})$ of highest weight $\boldsymbol{\mu}$ of $G_n$ is defined over the field of rationality $\BQ(\boldsymbol{\mu})$ of $\boldsymbol{\mu}$. We denote the latter's ring of integers with $\OO(\boldsymbol{\mu})$ and fix a $G_n(\OO(\boldsymbol{\mu}))$-stable lattice $M_{\mu}(\OO({\boldsymbol{\mu}}))$ in the representation space of $M_{\boldsymbol{\mu}}(\BQ(\boldsymbol{\mu}))$ and thus regard the latter as an $\OO(\boldsymbol{\mu})$-scheme of representations of $G_n\times_{\BZ}\OO(\boldsymbol{\mu})$. I.e.\ for any $\OO(\boldsymbol{\mu})$-algebra we have the set of $A$-valued points
$$
M_{\boldsymbol{\mu}}(A)\;=\;
M_{\boldsymbol{\mu}}(\OO(\boldsymbol{\mu}))\otimes_{\OO(\boldsymbol{\mu})}A,
$$
which is a representation of $G_n(A)$. We let
$$
M_{\boldsymbol{\mu},(p)}\;:=\;
M_{\boldsymbol{\mu}}\times_{\OO(\boldsymbol{\mu})}\OO(\boldsymbol{\mu})_{(p)},
$$
denote the localization at $p$. We remark that we have for any $\OO(\boldsymbol{\mu})_{(p)}$-algebra a natural isomorphism
$$
M_{\boldsymbol{\mu}}(A)\;=\;
M_{\boldsymbol{\mu},(p)}(A)\;=\;
A^{\rank(M_{\boldsymbol{\mu}})}.
$$

We write $M_{\boldsymbol{\mu}}^\vee$ for the (algebraic) dual, a notation that we extend to arbitrary representations. We fix once and for all an identification
$$
M_{\boldsymbol{\mu},(p)}^\vee\;=\;M_{\boldsymbol{\mu}^\vee,(p)},
$$
where $\boldsymbol{\mu}^\vee:=-w_n\boldsymbol{\mu}$.

We set
$$
{}^0G_n:=\bigcap_{\alpha\in X_\BQ(G_n)}\Ker(\alpha^2).
$$
Then ${}^0G_n$ is a reductive group scheme over $\BZ$ and $X_\BQ({}^0G_n)\otimes_\BZ\BQ=0$ \cite[Proposition 1.2]{Jan11}.

Furthermore denote by $S$ the maximal $\BQ$-split torus in the radical of $G_n$, or the maximal $\BQ$-split central torus of $G_n$, what amounts to the same. Then
\begin{equation}
G_n(\BR)={}^0G_n(\BR)\rtimes S(\BR)^0,
\label{eq:gn0gns}
\end{equation}
cf. \cite[Proposition 1.2]{borelserre1973}. We have explicitly
$$
{}^0G_n(\BR)=\{g\in G_n(\BR)\mid \prod_{\iota\in S_\infty}\norm{\det g_\iota}_\iota=1\}.
$$
This in particular shows that $S$ is a $\BQ$-split torus of rank $1$. Furthermore we introduce the subgroup
$$
G_n^\pm=\{g\in G_n(\BR)\mid \forall \iota\in S_\infty:\norm{\det g_\iota}_\iota=1\}.
$$

We fix standard maximal compact subgroups
$$
\begin{CD}
K_n@>\subseteq>>G_n(\BR)\\
@| @|\\
\prod_{\iota\in S_\infty}K_\iota@>\subseteq>>
\prod_{\iota\in S_\infty}\GL_n(k_{\iota}),
\end{CD}
$$
where $k_{\iota}$ denotes the completion of $k$ at $\iota$ and
$$
K_\iota\;=\;
\begin{cases}
\oU(n),&\text{for $\iota$ complex},\\
\oO(n),&\text{for $\iota$ real}.
\end{cases}
$$
Then by \eqref{eq:gn0gns}, $K_n$ eventually lies in ${}^0G_n(\BR)$, as does every arithmetic subgroup of $G_n$.

We write $Z_n$ for the center of $G_n$ and set
$$
GK_n\;:=\;K_n Z_n(\BR)^0,
$$
then, by the above observation, $GK_n$ is again a product of groups $\GO(n)$ and $\GU(n)$ at real and complex places respectively, which are defined as
\[
  \GO(n):=\{g\in \GL_n(\R)\mid g^{\mathrm t} g\in\R^\times{\bf1}_n\},
  \]
and
\[
  \GU(n):=\{g\in \GL_n(\BC)\mid \bar{g}^{\mathrm t} g\in\R^\times{\bf1}_n\}.
  \]
Here and henceforth, a superscript \lq$\,\mathrm t\,$\rq{} over a matrix indicates its transpose, a superscript \lq$\,0\,$\rq over a Lie or an algebraic group indicates its identity connected component, $\pi_0(G):=G/G^0$ denotes the resulting component group.


We use German gothic letters to indicate the complexified Lie algebras of the corresponding groups.

The superscripts \lq$\,\ad\,$\rq{} and \lq$\,\der\,$\rq{} over a group or a Lie algebra indicate the corresponding adjoint or derived group or algebra respectively.

For local archimedean considerations we consider $\GL_n$ as a reductive group over $\BQ$ and set
\[
  \SL_n^{\pm}:={}^0\GL_n.
  \]
Then
\[
  \SL_n^{\pm}(\R)=\{g\in \GL_n(\R)\mid \det(g)=\pm 1\}.
  \]

Recall that a representation of a real reductive group is called a Casselman-Wallach representation if it is smooth, Fr\'{e}chet, of moderate growth, and its Harish-Chandra module has finite length.

As usual, a superscript group indicates the space of invariants of a group or Lie algebra representation.

For any quasi-simple $G_n(\BR)$-representation $V$ we write $V^{(K_n)}$ for the subspace of (smooth) $K_n$-finite vectors. Then we have
\begin{equation}
H^\bullet(\g_n,GK_n; V^{(K_n)})\;=\;
H^\bullet(\g_n^{\rm der},K_n^{\rm der}; H^0(Z_n(\BR)^0;
V^{(K)})).
\label{eq:ggkcohomology}
\end{equation}
In particular $(\g_n,GK_n)$-cohomology behaves the same way as classical $(\g_n^{\rm der},K_n^{\rm der})$-cohomology.

We set
$$
b_{n}^{\BR}\;:=\;\left\lfloor\frac{n^2}{4}\right\rfloor,
$$
$$
b_{n}^{\BC}\;:=\;\frac{n(n-1)}{2},
$$
and
$$
b_{n}^k\;:=\;\sum_{v\in S_\infty} b_{n}^{k_v}.
$$
This will turn out to be the bottom degree for the cohomology \eqref{eq:ggkcohomology} for the infinity-component of the regular algebraic representations we consider.

We fix the standard embedding
$$
j:\;\;\;\GL_{n}\;\to\;\GL_{n+1},
$$
$$
\;\;\;\;\;\;\;\;\;\;\;\;\;\;\;
g\;\mapsto\;
\begin{pmatrix}
g&0\\
0&1
\end{pmatrix},
$$
which induces an embedding $j:G_{n}\to G_{n+1}$, and all of the above data are compatible with it. The corresponding diagonal embedding is denoted
\begin{equation}
j\times 1:\;\;\;G_{n}\to G_{n+1}\times G_n.
\label{eq:diagonalembedding}
\end{equation}

For every finite place $\mathfrak{p}\mid p$ of $k$, let $k_{\mathfrak{p}}$ denote the completion of $k$ at $\mathfrak{p}$, $\OO_\mathfrak{p}\subseteq k_\mathfrak{p}$ its valuation ring, and we normalize the norm such that
$$
|a|_\mathfrak{p}=|N_{k_\mathfrak{p}/\BQ_p}(a)|_p=\absNorm(\OO_\mathfrak{p}a)^{-1},
$$
the latter denoting the absolute norm. All Haar measures are normalized in such a way that a standard maximal compact subgroup (if it exists) receives Haar measure $1$.

We fix once and for all a character $\psi:k\backslash\BA_k\to\BC^\times$ which has a factorization $\psi=\otimes_v \psi_v$. Then $\psi$ induces a character $U_n(\BA_k)\to\BC^\times$, for $u=(u_{ij})$ explicitly given as
$$
\psi(u)=\sum_{i=1}^{n-1}u_{ii+1},
$$
the same applies locally. For convenience of normalization in Theorem \ref{thm:localbirch} we assume that $\psi_{\mathfrak{p}}$ has conductor $\OO_{\mathfrak{p}}$ for $\mathfrak{p}\mid p$. If $V$ is a generic representation of $\GL_n(\BA_k)$ we write ${\mathscr{W}}(V,\psi)$ for the $\psi$-Whittaker space associated to $V$. Again the same notation applies locally. Our choice of $\psi$ also fixes the local and global Gau\ss{} sums as
$$
G(\chi)\;:=\;\sum_{a\mod\mathfrak{f}_\chi}
\chi\left(\frac{a}{f}\right)
\psi\left(\frac{a}{f}\right),
$$
where $f$ is a generator of the conductor $\mathfrak{f}_\chi$ of the given quasi-character $\chi:k^\times\backslash\BA_k^\times\to\BC^\times$.

We write $K_{\mathfrak{p}}(m)$ for the mod-$\mathfrak{p}^m$-Iwahori subgroup of $\GL_{n+1}(k_{\mathfrak{p}})$, and similarly $K_{\mathfrak{p}}'(m)$ for the respective Iwahori subgroup of $\GL_n(k_{\mathfrak{p}})$.

Choose a uniformizer $\varpi_{\mathfrak{p}}$ in the valuation ring $\OO_{\mathfrak{p}}\subseteq k_{\mathfrak{p}}$ and set
$$
t_{(\mathfrak{p})}\;:=\;
\begin{pmatrix}
\varpi_{\mathfrak{p}}^n&&&\\
&\varpi_{\mathfrak{p}}^{n-1}&&\\
&&\ddots&\\
&&&1
\end{pmatrix}.
$$
For any ideal $\mathfrak{f}\mid p^\infty$ we fix a generator
$$
f\;:=\;\prod_{\mathfrak{p}\mid p}\varpi_{\mathfrak{p}}^{v_\mathfrak{p}(\mathfrak{f})},
$$
and set
$$
t_f\;:=\;
\prod_{\mathfrak{p}\mid p}
t_{(\varpi_{\mathfrak{p}})}^{v_\mathfrak{p}(\mathfrak{f})}.
$$
We introduce the magic matrix
$$
h^{(1)}:=
\begin{pmatrix}
&&&1\\
&w_{n}&&\vdots\\
&&&\vdots\\
0&\hdots&0&1
\end{pmatrix}\in\GL_{n+1}(\BZ),
$$
and define
$$
h^{(f)}\;:=\;t_{(f)}^{-1}
\cdot h^{(1)}\cdot
t_{(f)}
\;\in\;G_{n+1}(\BQ_p),
$$
We consider the Hecke operator
$$
V_{\mathfrak{p}}:=
K_{\mathfrak{p}}(m) t_{(\mathfrak{p})}
K_{\mathfrak{p}}(m),
$$
for $G_{n+1}$ and
$$
V_{\mathfrak{p}}'=
K_{\mathfrak{p}}'(m) \varpi_{\mathfrak{p}}t_{(\mathfrak{p})}
K_{\mathfrak{p}}'(m)
$$
as a Hecke operator for $G_n$. Then we have the Hecke operator
$$
U_{\mathfrak{p}}\;:=\;V_\mathfrak{p}\otimes V_\mathfrak{p}'
$$
acting on $(\pi\otimes\sigma)^{K_{\mathfrak{p}}(m)\times K_{\mathfrak{p}}'(m)}$. Those Hecke operators may be considered as products of the standard Hecke operators
$$
T_{\mathfrak{p},\nu}\;:=\;K_{\mathfrak{p}}(m)\begin{pmatrix}\varpi\cdot{\bf1}_{\nu}&0\\0&{\bf1}_{n+1-\nu}\end{pmatrix}K_{\mathfrak{p}}(m),\;\;\;0\leq\nu\leq n+1,
$$
and
$$
T_{\mathfrak{p},\nu}'\;:=\;K_{\mathfrak{p}}'(m)\begin{pmatrix}\varpi\cdot{\bf1}_{\nu}&0\\0&{\bf1}_{n-\nu}\end{pmatrix}K_{\mathfrak{p}}'(m),\;\;\;0\leq\nu\leq n.
$$

\section{Arithmetic modules}\label{sec:arithmeticmodules}

We call a simple $G_n$-module $M$, defined over a number field $\BQ(M)/\BQ$, {\em prearithmetic} if it is {\em essentially conjugate self-dual over $\BQ$}, i.e.\
\begin{equation}
M^{\vee,c}\;\cong\;M\otimes\xi
\label{eq:mduality}
\end{equation}
where $\xi\in X_{\BQ}(G_n)$, i.e.\ $\xi$ is defined over $\BQ$. We remark that the restriction map
$$
X_{\BQ}(G_n)
\;\to\;
X(S)
$$
is a monomorphism with finite cokernel. This map factors over $X_{\BQ}(\res_{k/\BQ}T_n)$, and its image in the latter consists of constant tuples $(w)_{\iota}$, $w\in\BZ=X(S)$. As there is no risk of confusion we simply write $(w)\in X_{\BQ}(\res_{k/\BQ}T_n)$.

Then an irreducible $G_n$-module $M_{\boldsymbol{\mu}}$ of highest weight $\mu$ is {\em prearithmetic} if and only if
\begin{equation}
\boldsymbol{\mu}-w_n\boldsymbol{\mu}^c\;=\;w\;\in\;X(S).
\label{eq:mupurity}
\end{equation}
We call $M_{\boldsymbol{\mu}}$ and also $\boldsymbol{\mu}$ {\em arithmetic} if $M_{\boldsymbol{\mu}^\sigma}$ is prearithmetic for every $\sigma\in\Aut(\BC/\BQ)$.

The motivation for our terminology is that the existence of an automorphic representation $\pi$ of $G_n(\BA)$ with non-trivial cohomology with coefficients in $M_{\boldsymbol{\mu}}(\BC)$ implies the arithmeticity of $\boldsymbol{\mu}$ (cf.\ \cite[Th\'eor\`eme 3.13 and Lemme 4.9]{Clo}).

We have the following elementary
\begin{prp}
Let $\boldsymbol{\mu}$ be arithmetic. Then
\begin{itemize}
\item[(i)] its \lq{}Tate twists\rq{} $\boldsymbol{\mu}+(j)$ for any $j\in\BZ$ are arithmetic,
\item[(ii)] its dual weight $-w_n\boldsymbol{\mu}$ is arithmetic,
\item[(iii)] the conjugate weights $\boldsymbol{\mu}^\sigma$ for $\sigma\in\Aut(\BC/\BQ)$ are arithmetic.
\end{itemize}
\end{prp}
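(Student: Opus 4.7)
The plan is to reduce everything to two elementary observations about the central character $(j) \in X_{\BQ}(\res_{k/\BQ}T_n)$: first, since $(j)$ is defined over $\BQ$, it is Galois-invariant, so in particular $(j)^c = (j)$; second, since in each component $\iota$ the weight $(j)$ equals the constant tuple $(j,\ldots,j) \in \BZ^n$, it is fixed by the long Weyl element $w_n$ (which permutes coordinates in each component). I would state these two observations once at the start of the proof.

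For (iii), I would simply note that $(\boldsymbol{\mu}^\sigma)^\tau = \boldsymbol{\mu}^{\tau\sigma}$ for all $\tau \in \Aut(\BC/\BQ)$, so arithmeticity of $\boldsymbol{\mu}^\sigma$ is immediate from arithmeticity of $\boldsymbol{\mu}$.

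For (i), since the Galois action on $X(\res_{k/\BQ}T_n)$ fixes $(j)$, we have $(\boldsymbol{\mu}+(j))^\sigma = \boldsymbol{\mu}^\sigma + (j)$. Using $w_n(j) = (j)$ and $(j)^c = (j)$, the expression
$$
(\boldsymbol{\mu}^\sigma + (j)) - w_n(\boldsymbol{\mu}^\sigma + (j))^c
$$
telescopes to $\boldsymbol{\mu}^\sigma - w_n\boldsymbol{\mu}^{\sigma,c}$, which lies in $X(S)$ by prearithmeticity of $\boldsymbol{\mu}^\sigma$. Hence $\boldsymbol{\mu}+(j)$ is arithmetic.

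For (ii), the key identity is that $w_n$ commutes with the Galois action (since $w_n$ acts on each component $X(T_n)$ while Galois permutes components), and $w_n^2 = 1$. Therefore $(-w_n\boldsymbol{\mu})^\sigma = -w_n\boldsymbol{\mu}^\sigma$, and
$$
(-w_n\boldsymbol{\mu}^\sigma) - w_n(-w_n\boldsymbol{\mu}^\sigma)^c = -w_n\boldsymbol{\mu}^\sigma + \boldsymbol{\mu}^{\sigma,c}.
$$
Writing the prearithmeticity relation for $\boldsymbol{\mu}^\sigma$ as $w_n\boldsymbol{\mu}^{\sigma,c} = \boldsymbol{\mu}^\sigma - w_\sigma$ with $w_\sigma \in X(S)$, applying $w_n$ and using that $w_\sigma$ is $w_n$-invariant gives $\boldsymbol{\mu}^{\sigma,c} = w_n\boldsymbol{\mu}^\sigma - w_\sigma$. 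Substituting back yields $-w_\sigma \in X(S)$, proving prearithmeticity of $-w_n\boldsymbol{\mu}^\sigma$. There is no real obstacle here; the only thing to be careful about is verifying that $w_n$ really commutes with Galois and that $X(S) \subseteq X_\BQ(\res_{k/\BQ}T_n)$ consists of $w_n$-fixed elements, both of which follow directly from the explicit description of the inclusion already given in the excerpt.
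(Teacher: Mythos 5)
Your proof is correct: the paper states this proposition without proof (calling it elementary), and your verification via the purity criterion \eqref{eq:mupurity} — using that $(j)$ is a constant tuple fixed by both $\Aut(\BC/\BQ)$ and $w_n$, that $w_n$ commutes with the Galois action on $X(\res_{k/\BQ}T_n)$, and that $(\boldsymbol{\mu}^\sigma)^\tau=\boldsymbol{\mu}^{\tau\sigma}$ — is exactly the intended elementary computation. In particular your treatment of (ii), applying $w_n$ to the purity relation for $\boldsymbol{\mu}^\sigma$ and using $w_n^2=1$ together with the $w_n$-invariance of the constant tuples coming from $X(S)$, closes the argument with nothing missing.
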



Now let $\boldsymbol{\mu}$ resp.\ $\boldsymbol{\nu}$ be arithmetic weights of $\res_{k/\BQ}T_{n+1}$ resp.\ $\res_{k/\BQ}T_n$, $n\geq 1$. We call a $j\in\BZ$ {\em critical} for $\boldsymbol{\mu}\times\boldsymbol{\nu}$ if there is a non-zero $G_n$-equivariant map
\begin{equation}
\xi_j:\;\;\;M_{\boldsymbol{\mu}}\otimes M_{\boldsymbol{\nu}}\;\to\;M_{(j)}.
\label{eq:tauij}
\end{equation}
Assuming its existence, $\xi_j$ is defined over $\OO(\boldsymbol{\mu},\boldsymbol{\nu})$ (the ring of integers in $\BQ(\boldsymbol{\mu},\boldsymbol{\nu})$), and is unique up to an integral unit in the latter ring due to the multiplicity one property of restrictions of irreducibles for $\GL_{n+1}|\GL_n$.

We write
$$
\xi\;:=\;\oplus_{j}\xi_j:M_{\boldsymbol{\mu}}\otimes M_{\boldsymbol{\nu}}\to\bigoplus_{j}M_{(j)}=:M_{(\boldsymbol{\mu},\boldsymbol{\nu})},
$$
where $j$ runs through the critical $j$ for $\boldsymbol{\mu}\times\boldsymbol{\nu}$. Let $j_{\min}$ denote the minimal critical $j$.

We write $\Gamma_n$ for the Zariski-closure, i.e.\ the {\em algebraic hull}, of a torsion-free arithmetic subgroup $\Gamma$ of $G_n$ inside $G_n$. Note that $\Gamma_n$ is independent of $\Gamma$.

\begin{lem}\label{lem:gammannormal}
We have $G_{n}^{\der}\subseteq\Gamma_n$ and the latter is normal in $G_{n}$.
\end{lem}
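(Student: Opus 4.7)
The plan is to derive both assertions from a single application of Borel's density theorem to the semisimple derived group $G_n^{\der}=\res_{\OO_k/\BZ}\SL_n$. Once the inclusion $G_n^{\der}\subseteq\Gamma_n$ is established, normality of $\Gamma_n$ in $G_n$ is automatic: the quotient $G_n/G_n^{\der}$ is a torus, hence commutative, so any algebraic subgroup of $G_n$ containing $G_n^{\der}$ has image in this abelian quotient that is automatically a normal subgroup, and is therefore itself normal in $G_n$.

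To prove $G_n^{\der}\subseteq\Gamma_n$, I would consider the intersection $\Gamma':=\Gamma\cap G_n^{\der}(\BQ)$. Since $\Gamma$ is commensurable with $G_n(\BZ)=\GL_n(\OO_k)$, the group $\Gamma'$ is commensurable with $\SL_n(\OO_k)=G_n^{\der}(\BZ)$, hence is an arithmetic subgroup of $G_n^{\der}$. By Borel--Harish-Chandra it is a lattice in the real Lie group
$$
G_n^{\der}(\BR)\;=\;\prod_{v\in S_\infty}\SL_n(k_v),
$$
which, for $n\geq 2$, is semisimple without compact factors at any archimedean place of $k$ (the case $n=1$ is vacuous, since $G_1^{\der}$ is trivial). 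Borel's density theorem then yields that $\Gamma'$ is Zariski dense in $G_n^{\der}$. Since $\Gamma'\subseteq\Gamma\subseteq\Gamma_n$ and $\Gamma_n$ is Zariski closed by construction, this gives $G_n^{\der}\subseteq\Gamma_n$ as desired.

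I do not anticipate any serious obstacle: the argument boils down to invoking Borel density and observing that inclusion of the derived subgroup forces normality in a reductive ambient. The only points worth spelling out explicitly are the identification $(\res_{\OO_k/\BZ}\GL_n)^{\der}=\res_{\OO_k/\BZ}\SL_n$, the commensurability $\Gamma'\sim\SL_n(\OO_k)$, and the absence of compact factors in $\prod_v\SL_n(k_v)$, all of which are standard. In particular, the independence of $\Gamma_n$ from the choice of $\Gamma$ (already recorded in the paper) is never used in an essential way in this argument.
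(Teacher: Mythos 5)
Your proposal is correct and follows essentially the same route as the paper: the paper also deduces $G_n^{\der}\subseteq\Gamma_n$ from the fact that $\Gamma\cap G_n^{\der}$ is an arithmetic subgroup of the semisimple group $G_n^{\der}$ and hence Zariski dense (the Borel density argument you make explicit), and then gets normality from the abelianness of the torus quotient $G_n/G_n^{\der}$ exactly as you do. Your write-up merely spells out the standard inputs (commensurability with $\SL_n(\OO_k)$, absence of compact factors, the vacuous case $n=1$) that the paper leaves implicit.
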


\begin{proof}
We know that
$$
\Gamma_n\cap G_{n}^{\rm der}=G_{n}^{\rm der},
$$
because $\Gamma\cap G_{n}^{\rm der}$ is an arithmetic subgroups of $G_n^{\rm der}$ and hence Zariski-dense in $G_{n}^{\rm der}$. This shows
$$
G_{n}^{\rm der}\;\subseteq\; \Gamma_n.
$$
As the quotient
$$
G_{n}/G_{n}^{\rm der}
$$
is a torus, it is abelian and hence $\Gamma_n$ is normal in $G_{n}$.
\end{proof}

Now let $\pi$ resp.\ $\sigma$ be regular algebraic irreducible cuspidal automorphic representations of $G_{n+1}(\BA)$ resp.\ $G_{n}(\BA)$ with weights $\boldsymbol{\mu}$ resp.\ $\boldsymbol{\nu}$, i.e.\
\begin{equation}
H^\bullet(\g_{n+1},GK_{n+1};
\pi_\infty^{(K_{n+1})}\otimes_\BC M_{\boldsymbol{\mu}}(\BC))\;\neq\;0,
\label{eq:pigkcoh}
\end{equation}
and
$$
H^\bullet(\g_{n},GK_{n};
\sigma_\infty^{(K_{n})}\otimes_\BC M_{\boldsymbol{\nu}}(\BC))\;\neq\;0.
$$
The following observation is due to Kasten-Schmidt \cite{KS} (at real places) and Raghuram \cite{Ragpre} (complex places).
\begin{prp}
The map
$$
j\;\mapsto\;\frac{1}{2}+j
$$
sets up a bijection between integers $j\in\BZ$ critical for $\boldsymbol{\mu}\times\boldsymbol{\nu}$ and critical values of $L(s,\pi\times\sigma)$ in the sense of Deligne \cite{Del}.
\end{prp}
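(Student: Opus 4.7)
The plan is to factor both sides of the claimed bijection place by place over $S_\infty$ and reduce to a purely local archimedean comparison, which at real places is essentially carried out in \cite{KS} and at complex places in \cite{Ragpre}. Since $M_{\boldsymbol\mu}$ decomposes as $\bigotimes_{\iota} M_{\mu_\iota}$ after extension of scalars under the restriction of scalars decomposition of $G_n$, and since the one-dimensional target $M_{(j)}$ similarly decomposes as $\bigotimes_\iota M_{(j)}$ (each factor being $\det^j$ of the corresponding $\GL_n$-factor), a nonzero $G_n$-equivariant map $\xi_j:M_{\boldsymbol\mu}\otimes M_{\boldsymbol\nu}\to M_{(j)}$ exists if and only if for every embedding $\iota\in\Hom(k,\BC)$ the $\GL_n$-module $M_{\mu_\iota}\otimes M_{\nu_\iota}$ contains the character $(j,\dots,j)$. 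By the classical branching rule for $\GL_{n+1}\supset\GL_n$ this multiplicity is zero or one, so the set of critical $j$ is exactly the intersection over all $\iota$ of the combinatorial intervals cut out by the interlacing conditions on the coordinates of $(\mu_\iota,\nu_\iota)$.

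On the analytic side, I would appeal to Deligne's criterion \cite{Del}: $s_{\rm crit}=\frac{1}{2}+j$ is critical for $L(s,\pi\times\sigma)$ if and only if neither $L_\infty(s,\pi_\infty\times\sigma_\infty)$ nor $L_\infty(1-s,\pi_\infty^\vee\times\sigma_\infty^\vee)$ has a pole at $s_{\rm crit}$. The archimedean $L$-factor factors over $v\in S_\infty$, and at each place the Langlands parameter of the cohomological regular algebraic representation $\pi_v\otimes\sigma_v$ is explicitly determined by $(\mu_v,\nu_v)$ through the infinitesimal character (cf.\ \cite{Clo}). Consequently the criticality condition also factorizes over $S_\infty$, so it suffices to match the branching condition with the $L_\infty$-holomorphy condition place by place, the shift between the combinatorial parameter $j\in\BZ$ and the spectral parameter $s_{\rm crit}\in\tfrac12+\BZ$ being absorbed into $j\mapsto\tfrac12+j$ once the unitary normalization of $\pi\times\sigma$ is taken into account.

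At a real place this precise match between the branching interval of $j$ and the interval of $s$ where both $L_\infty(s,\pi_v\times\sigma_v)$ and $L_\infty(1-s,\pi_v^\vee\times\sigma_v^\vee)$ are holomorphic is the computation of Kasten and Schmidt \cite{KS}; at a complex place it is Raghuram's computation in \cite{Ragpre}, handling the pair $(\GL_{n+1}\times\GL_{n+1},\GL_n\times\GL_n)$ that arises from $\res_{\C/\R}$. I would invoke both results and combine them over all $v\in S_\infty$ to conclude. The only real obstacle is bookkeeping: aligning our conventions for the $\rho$-shifts, for the long Weyl element $w_n$, for duality $\boldsymbol\mu^\vee=-w_n\boldsymbol\mu$, and for the unitary twist of the Rankin--Selberg $L$-function with those used in \cite{KS,Ragpre}, so that the resulting correspondence reads exactly $j\leftrightarrow\frac12+j$. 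Once the normalizations are harmonized the bijection is immediate from the two cited local computations.
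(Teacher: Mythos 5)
Your proposal takes essentially the same route as the paper: the paper gives no independent argument for this proposition, simply attributing it to Kasten--Schmidt \cite{KS} at real places and Raghuram \cite{Ragpre} at complex places, which is precisely the place-by-place reduction plus citation you outline. The additional detail you supply (multiplicity one in the $\GL_{n+1}\supset\GL_n$ branching on the algebraic side, and Deligne's criterion factoring over $S_\infty$ because archimedean $L$-factors have no zeros) is correct and merely makes explicit what the paper leaves implicit.
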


\section{The Main Theorem}\label{sec:maintheorem}

Let $\pi$ and $\sigma$ denote irreducible cuspidal regular algebraic automorphic representations as in the previous section, that we also consider as representations of $\GL_{n+1}(\BA_k)$ and $\GL_{n}(\BA_k)$. Then we know by Clozel \cite[Th\'eor\`eme 3.13 resp.\ Proposition 3.16]{Clo}, that the finite components $\pi^{(\infty)}$ and $\sigma^{(\infty)}$ are defined over a number field $E=\BQ(\pi,\sigma)\supseteq\BQ(\boldsymbol{\mu},\boldsymbol{\nu})$, which is the field of rationality of $\pi^{(\infty)}\otimes\sigma^{(\infty)}$. We remark that in an appropriate sense, $\pi\widehat{\otimes}\sigma$ as a global representation is defined over the field $\BQ(\pi,\sigma)$ as well \cite{Janpre}.

We fix a prime $p$ subject to the following conditions:
\begin{itemize}
\item[(H)] for each prime $\mathfrak{p}\mid p$ in $k$, there is a $U_{\mathfrak{p}}$-eigen vector
$$
0\neq t_{\mathfrak{p}}\in(\pi_{\mathfrak{p}}\otimes_\BC\sigma_{\mathfrak{p}})^{K_{\mathfrak{p}}(m)\times K_{\mathfrak{p}}'(m)},
$$
and the corresponding eigen values $\kappa_{\mathfrak{p}}\in E$ are all {\em non-zero}.
\end{itemize}
We call the latter the {\em finite slope} condition. We also consider the stronger $\mathfrak{p}$-{\em ordinarity} condition
\begin{itemize}
\item[(O)] If $s_{\min}\in\frac{1}{2}+\BZ$ denotes the left most critical value for $L(s,\pi\times\sigma)$, then for all $\mathfrak{p}\mid p$
$$
|\kappa_{\mathfrak{p}}|_p=\left|
\boldsymbol{\mu}(a_{\mathfrak{p}})\cdot
\boldsymbol{\nu}(a_{\mathfrak{p}}')
\right|_p,
$$
\end{itemize}
where
$$
a_{(\mathfrak{p})}'\;:=\;
\begin{pmatrix}
\varpi_{\mathfrak{p}}^{-n}&&&\\
&\varpi_{\mathfrak{p}}^{-n+1}&&\\
&&\ddots&\\
&&&\varpi_{\mathfrak{p}}^{-1}
\end{pmatrix},
$$
in $\GL_n(k)$ and $a_{({\mathfrak{p}})}=j(a_{({\mathfrak{p}})}')$, with local uniformizers $\varpi_{\mathfrak{p}}\in\OO_{\mathfrak{p}}$.

We emphasize that these are conditions on the tuple $(\pi,\sigma,(t_{\mathfrak{p}})_{\mathfrak{p}\mid p})$.

For $\pi_\mathfrak{p}$ resp.\ $\sigma_\mathfrak{p}$ spherical, with pairwise distinct Hecke roots, it is well known that the corresponding data are all of finite slope (cf.\ \cite[Proposition 4.12]{KMS} resp.\ \cite[Section 1.6]{Jan14}).

We write $L(s,\pi\times\sigma)$ for the Rankin-Selberg $L$-function attached to $(\pi,\sigma)$ in the sense of \cite{jpss1983} and assume in the sequel that there is at least one $s\in\frac{1}{2}+\BZ$ which is critical for $L(s,\pi\times\sigma)$ in the sense of \cite{Del} and we let $s_{\min}=\frac{1}{2}+j_{\min}$ denote the left most critical value.

As our representations at $\mathfrak{p}$ are generic we may consider the Fourier isomorphism
$$
w:\;\;\;\pi_{\mathfrak{p}}\otimes\sigma_{\mathfrak{p}}\to
{\mathscr{W}}(\pi_{\mathfrak{p}},\psi_{\mathfrak{p}})
\otimes
{\mathscr{W}}(\sigma_{\mathfrak{p}},\psi_{\mathfrak{p}}^{-1}).
$$

We let for any place $v$ of $k$
$$
L_v\;:=\;
\begin{cases}
\Gm(\OO_v),&v\nmid\infty,\\
\Gm(k_v)^0,&v\mid\infty.
\end{cases}
$$
Then for any ideal $\mathfrak{f}\mid p^\infty$ in $k$, i.e.\ any ideal dividing a $p$-power, we define
$$
C(\mathfrak{f})\;:=\;
k^\times\backslash\BA_k^\times/(1+\mathfrak{f})\prod_{v\nmid \mathfrak{f}}L_v,
$$
and set
$$
C(p^\infty)\;:=\;
\varprojlim C(\mathfrak{f})=
\BA_k^\times/\overline{k^\times\prod_{v\nmid p}L_v},
$$
where $\mathfrak{f}$ ranges over the ideals dividing $p^\infty$. By class field theory $C(p^\infty)$ corresponds isomorphically to the Galois group of the maximal abelian extension $k^{(p^\infty)}$ of $k$ unramified outside $p\infty$.

\begin{thm}\label{main:padicl}
Under the above condition (O) resp.\ (H), for each $s_{\rm crit}=\frac{1}{2}+j$ critical for $L(s,\pi\times\sigma)$, there exist periods $\Omega_j^{\pm}\in\BC^\times$ and an $\BQ(\pi,\sigma)$-valued $p$-adic measure $\mu_j$ resp.\ a distribution in the finite slope case on $C(p^\infty)$, such that for all finite order characters $\chi:C(p^\infty)\to \overline{\BQ}$:
$$
c(\chi,s_{\rm crit})\cdot
\frac{L^{(p)}(s_{\rm crit},(\pi\times\sigma)\otimes\chi)}{\Omega_j^{\sgn\chi}}\;=\;
\int_{C(p^\infty)}\chi d\mu_j,
$$
with
$$
c(\chi,s_{\rm crit})\;=\;G(\chi)\cdot\prod_{\mathfrak{p}\mid p}c(\chi_{\mathfrak{p}},s_{\rm crit})
$$
and
$$
c(\chi_{\mathfrak{p}},s_{\rm crit})\;=\;
\begin{cases}
I_{\mathfrak{p}}(\pi,\sigma,t_{\mathfrak{p}},\chi,s_{\rm crit}),
&\text{for unramified $\chi_{\mathfrak{p}}$},\\
\fN(\f_{\chi_{\mathfrak{p}}})^{\frac{(n+1)n(s_{\rm min}-s_{\rm crit})}{2}-\frac{(n+1)n(n-1)}{6}}
\cdot \kappa_{\mathfrak{p}}^{v_\mathfrak{p}(\f_{\chi_\mathfrak{p}})},
&\text{for $\chi_{\mathfrak{p}}$ of conductor $\f_{\chi_{\mathfrak{p}}}$}.
\end{cases}
$$
Furthermore
$$
I_{\mathfrak{p}}(\pi,\sigma,t_{\mathfrak{p}},\chi,s_{\rm crit})
:=
\int_{U_{n}(k_{\mathfrak{p}})\backslash\GL_{n}(k_\mathfrak{p})}
w(t_{\mathfrak{p}})
\left(
j(g)
h^{(\varpi)}
,g
\right)
\chi_{\mathfrak{p}}(\det(g))|\det(g)|_{\mathfrak{p}}^{\nu}dg.
$$
\end{thm}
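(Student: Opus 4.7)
My strategy is to realize the $p$-adic $L$-function as a projective limit of modular symbols at varying $p$-power Iwahori level, obtained by pairing a cuspidal cohomology class on $G_{n+1}\times G_n$ against fundamental cycles of arithmetic tori indexed by $C(\mathfrak{f})$.

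First, I attach to $(\pi,\sigma)$ a class in bottom-degree cuspidal cohomology of the locally symmetric space for $G_{n+1}\times G_n$ with coefficients in $M_{\boldsymbol{\mu}}\otimes M_{\boldsymbol{\nu}}$, built from the distinguished $U_\mathfrak{p}$-eigenvectors $t_\mathfrak{p}$ at the places $\mathfrak{p}\mid p$, new vectors at unramified finite places, and generators of the archimedean bottom-degree $(\g,GK)$-cohomology provided by Sun's non-vanishing theorem. Pulling back along the diagonal embedding \eqref{eq:diagonalembedding} to $G_n$ and composing with $\xi=\bigoplus_j\xi_j$ yields a compactly supported class on the locally symmetric space of $G_n$ with coefficients in $M_{(\boldsymbol{\mu},\boldsymbol{\nu})}$. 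Pairing this class with the characteristic cycle of the quotient defining $C(\mathfrak{f})$, suitably twisted by the magic matrix $h^{(f)}$ so as to align its Iwahori-level structure with the character conductor, produces a finite-level element $\tilde\mu^{(\mathfrak{f})}\in M_{(\boldsymbol{\mu},\boldsymbol{\nu})}^\Gamma$. Compatibility as $\mathfrak{f}$ varies follows from the standard distribution relation for the Hecke operator $U_\mathfrak{p}$; dividing by $\prod_{\mathfrak{p}\mid p}\kappa_\mathfrak{p}^{v_\mathfrak{p}(\mathfrak{f})}$, which is legal by hypothesis (H), produces a locally analytic distribution $\tilde\mu$ on $C(p^\infty)$. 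Projection along $\xi_j$ and division by a period $\Omega_j^\pm$ (whose non-vanishing is again Sun's theorem) yields $\mu_j$. In case (O), integrality of $\xi_j$ over $\OO(\boldsymbol{\mu},\boldsymbol{\nu})_{(p)}$ together with the matching of $|\kappa_\mathfrak{p}|_p$ with the archimedean weight upgrades $\mu_j$ to a $p$-adically bounded measure.

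The interpolation formula is established by unfolding $\int_{C(p^\infty)}\chi\,d\mu_j$ for a finite order character $\chi$. The resulting adelic integral factors as a product over places: at archimedean places it produces the period $\Omega_j^{\sgn\chi}$; at $v\nmid p\infty$ the classical unramified Rankin--Selberg zeta integral of \cite{jpss1983} gives $L^{(p)}(s_{\rm crit},(\pi\times\sigma)\otimes\chi)$; at each $\mathfrak{p}\mid p$ a local Birch-type lemma computes $c(\chi_\mathfrak{p},s_{\rm crit})$. The global Gauss sum $G(\chi)^{(n+1)n/2}$ emerges from assembling the ramified contributions through the fixed choice of additive character $\psi$.

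The main technical obstacle is the local Birch lemma at ramified $\chi_\mathfrak{p}$. The matrix $h^{(f)}$ is designed so that conjugation by it intertwines the mod-$\mathfrak{p}^m$ Iwahori filtration with the conductor of $\chi$; combined with an unfolding along $U_n$ and the change of variable $g\mapsto g t_{(f)}^{-1}$, the integral collapses to the product of a pure power of $\absNorm(\mathfrak{f}_{\chi_\mathfrak{p}})$ (tracking $\det$ factors from $t_{(f)}$ and the highest-weight action of $a_\mathfrak{p},a_\mathfrak{p}'$) with a power of $\kappa_\mathfrak{p}$ (from iterated application of $U_\mathfrak{p}$). Getting the exponent $\tfrac{(n+1)n(s_{\min}-s_{\rm crit})}{2}-\tfrac{(n+1)n(n-1)}{6}$ correct requires careful bookkeeping of the coefficient-system weights. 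A further subtlety at complex places of $k$ is that $\dim M^\Gamma$ may strictly exceed the number of critical $j$, so one has to verify that the projections $\xi_j$ really do separate the critical components of $\tilde\mu$ from the phantom components alluded to in the introduction.
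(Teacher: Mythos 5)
Your proposal follows essentially the same route as the paper: the modular symbol $\mathscr P^{K,K'}_{h,h',c}$ obtained by pulling back a $U_p$-eigenclass along the diagonal and integrating over components of the $G_n$-locally symmetric space twisted by $h^{(f)}t_{(p)}^v$, the distribution relation from the $U_p$-action with division by $\kappa^v$, boundedness in the ordinary case via Hida's integrally normalized operator and the highest-weight comparison, interpolation via the local Birch lemma (Theorem \ref{thm:localbirch}, imported from \cite{Jan14}) together with the unramified Rankin--Selberg theory, and non-vanishing of the periods via Sun's theorem as in Section \ref{sec:cohreps}. The only cosmetic deviation is that the paper does not need to ``separate'' the phantom components at complex places --- the projections $\xi_j$ are simply applied and the extra components are discarded --- but this does not affect the argument.
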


Up to the computation of the Euler factors at the finite places $v\nmid\mathfrak{p}$ where $\det(K)\neq\OO_v^\times$, Theorem \ref{main:padicl} generalizes by construction to arbitrary finite order characters of $C(K(p^\infty))$. As already remarked in the introduction, the value of this zeta integral is known for normalized choices of $t_{\mathfrak{p}}$ in the cases $n\leq 2$, assuming $\pi$ and $\sigma$ unramified at $\mathfrak{p}$, and in those cases agrees with the value expected in \cite{coatesperrinriou1989,perrinriou1995}.

The construction of the distribution and its main properties will be proven in Theorems \ref{thm:distribution}, \ref{thm:boundedness}, \ref{thm:interpolation}. We also remark that the $p$-adic $L$-function we construct satisfies the equation as stated in the introduction, see Theorem \ref{thm:functionalequation}.

\section{Topological formalism}\label{sec:sheaves}

\subsection{Locally symmetric spaces}

For each compact open subgroup $K$ of $G_{n+1}(\BA_\BQ^{(\infty)})$ we consider the locally symmetric spaces
$$
\mathscr X_{n+1}(K)\;:=\;
G_{n+1}(\BQ)\backslash G_{n+1}(\BA_\BQ)/K_{n+1}K,
$$
and
$$
\mathscr X_{n+1}^{\ad}(K)\;:=\;
G_{n+1}(\BQ)\backslash G_{n+1}(\BA_\BQ)/GK_{n+1}K,
$$
which come with the canonical projection map denoted
$$
\ad:\quad
\mathscr X_{n+1}(K)\to\mathscr X_{n+1}^{\ad}(K).
$$
We set
$$
C(K)\;:=\;
k^\times\backslash\BA_k^\times/(k\otimes_\BQ\BR)^0\det(K),
$$
which is a finite discrete topological space, and comes with the map
$$
{\det}_K:\;\;\;\mathscr X_n(K)\to k^\times\backslash\BA_k^\times/(k\otimes_\BQ\BR)^0\det(K),
$$
factoring over $\ad$. In particular we write
$$
\mathscr X_{n+1}(K)[c]
\;:=\;
{\det}_K^{-1}(c)
$$
for the fiber of a class
$$
c\in C(K),
$$
for which we may choose a representative $c\in\BA_k^\times$ which is trivial at infinity. Then by strong approximation we have a natural identificaton
\begin{equation}\label{eq:symmetriccomponent}
\mathscr X_{n+1}(K)[c]\;=\;\Gamma_c\backslash G_{n+1}(\BR)/K_{n+1},
\end{equation}
where
$$
\Gamma_c:=G_{n+1}(\BQ)\cap d_{(c)}Kd_{(c)}^{-1},
$$
with
$$
d_{(c)}:=\diag(c,1,\dots,1)\;\in\;G_n(\BA_\BQ).
$$
In particular all fibers are connected. In the squel we assume $K$ to be {\em neat} in the sense that for all $c\in C(K)$ the arithmetic group $\Gamma_c$ is torsion free (although $\Gamma_c$ as a subgroup of $G_n(\BQ)$ depends on the chosen representative of $c$, its isomorphism class as a group doesn't), which can always be achieved by replacing $K$ with a suitable neat compact open subgroup (which then necessarily is of finite index in $K$). For neat $K$ the space $\mathscr X_{n+1}(K)$ is a manifold.

For a sheaf $F$ on $\mathscr X_{n+1}(K)$ we denote by $F[c]$ its restriction to $\mathscr X_{n+1}(K)[c]$.

Fix a field extension $E/\BQ(\boldsymbol{\mu})$. Then the $G_{n+1}(\BQ)$-module $M_{\boldsymbol{\mu}}(E)$ gives rise to a sheaf $\underline{M}_{\boldsymbol{\mu}}(E)$ on $\mathscr X_{n+1}(K)$, arising as a local system from the left- $G_{n+1}(\BQ)$- and right- $K_n^0\times K$-module
$$
\widetilde {M}_{\boldsymbol{\mu}}(E)\;:=\;
{M}_{\boldsymbol{\mu}}(E)\times G_{n+1}(\A_\BQ).
$$
Its sections on an open subset $U\subseteq\mathscr X_{n+1}(K)$ are explicitly given by
$$
\Gamma(U;\underline{M}_{\boldsymbol{\mu}}(E))=
\{f:G_{n+1}(\BQ)U\to M_{\boldsymbol{\mu}}(E)\mid f\;\text{locally constant and}
$$
$$
\forall\gamma_{n+1}\in G_{n+1}(\BQ),u\in G_{n+1}(\BQ)U:f(\gamma_{n+1} u)=\rho_{\boldsymbol{\mu}}(\gamma_{n+1})f(u)
\}.
$$
We introduce the translations
$$
t_g:\;\;\;\mathscr X_{n+1}(gKg^{-1})[c]\to \mathscr X_{n+1}(K)[c\det(g)],
$$
$$
G_{n+1}(\BQ)x gKg^{-1}\mapsto G_{n+1}(\BQ)xg K,
$$
for $g\in G_{n+1}(\BA_\BQ^{(\infty)})$. Then we have the identification
\begin{equation}
\Gamma(U;t_g^*\underline{M}_{\boldsymbol{\mu}}(E))=
\Gamma(t_g(U);\underline{M}_{\boldsymbol{\mu}}(E)).
\label{eq:pullbackid}
\end{equation}
The pullbacks $t_g^*$ naturally extend to cohomology.

In order to lay hands on the pullbacks, we consider for each open $U\subseteq\mathscr X_{n+1}(gKg^{-1})$ the map
$$
\Gamma(U;t_g^*\underline{M}_{\boldsymbol{\mu}}(E))
\;\to\;\Gamma(U;\underline{M}_{\boldsymbol{\mu}}(E)),
$$
given via the identification \eqref{eq:pullbackid} by
$$
\Gamma(t_g(U);\underline{M}_{\boldsymbol{\mu}}(E))\;\ni\;
f\;\mapsto\;\left[u\mapsto f(ug)\right],
$$
i.e.\ we obtain a map
$$
\rho_{\boldsymbol{\mu}}(g):\quad
t_g^*\underline{M_{\boldsymbol{\mu}}}(E)\to \underline{M}_{\boldsymbol{\mu}}(E)
$$
of sheaves on $\mathscr X_{n+1}(gKg^{-1})$.

In particular the composition
$$
t_g^{\boldsymbol{\mu}}\;:=\;\rho_{\boldsymbol{\mu}}(g)\circ t_g^*
$$
sends sections of the sheaf $\underline{M}_{\boldsymbol{\mu}}(E)$ on $\mathscr X_{n+1}(K)$ to sections of the sheaf $\underline{M}_{\boldsymbol{\mu}}(E)$ on $\mathscr X_{n+1}(gKg^{-1})$. By functoriality this gives a map
$$
t_g^{\boldsymbol{\mu}}:\quad
H_*^q(\mathscr X_{n+1}(K),\underline{M}_{\boldsymbol{\mu}}(E))\to
H_*^q(\mathscr X_{n+1}(gKg^{-1}),\underline{M}_{\boldsymbol{\mu}}(E)),
$$
which has the factorization
$$
\begin{CD}
H_*^q(\mathscr X_{n+1}(K),\underline{M}_{\boldsymbol{\mu}}(E))
@>t_g^*>>
H_*^q(\mathscr X_{n+1}(gKg^{-1}),t_g^*\underline{M}_{\boldsymbol{\mu}}(E))\\
@>\rho_{\boldsymbol{\mu}}(g)>>
H_*^q(\mathscr X_{n+1}(gKg^{-1}),\underline{M}_{\boldsymbol{\mu}}(E)),
\end{CD}
$$
for $*\in\{-,\rm c,!\}$. We use the same notation mutatis mutandis for $\mathscr X_{n+1}^{\ad}$.

When dealing with tensor products as coefficients, we assume $\BQ(\boldsymbol{\mu},\boldsymbol{\nu})\subseteq E$ and identify
$$
M_{\boldsymbol{\mu}\times\boldsymbol{\nu}}(E)=
M_{\boldsymbol{\mu}}(E)\otimes_E M_{\boldsymbol{\nu}}(E)
$$
and consider it as a $G_{n+1}\times G_n$-module and by abuse of notation also as a $G_n$-module via the diagonal embedding $j\times 1$, and identify
$$
\rho_{\boldsymbol{\mu}\times\boldsymbol{\nu}}(g)=
\rho_{\boldsymbol{\mu}\times\boldsymbol{\nu}}(j(g),g),
\quad g\in G_n(\BA_\BQ^{(\infty)}).
$$
In particular we have for each $g\in G_n(\BA_\BQ^{(\infty)})$, $*\in\{-,\rm c,!\}$, a map
$$
t_g^{\boldsymbol{\mu}\times\boldsymbol{\nu}}:\quad
H_*^q(\mathscr X_{n}(K'),\underline{M}_{\boldsymbol{\mu}\times\boldsymbol{\nu}}(E))\to
H_*^q(\mathscr X_{n}(gK'g^{-1}),\underline{M}_{\boldsymbol{\mu}\times\boldsymbol{\nu}}(E)).
$$

\subsection{Global sections}

The reason for the notation $\rho_{\boldsymbol{\mu}}(g)$ will become clear when considering global sections over connected components $\mathscr X_{n+1}(K)[c]$ as elements of
$$
H^0(\Gamma_c; M_{\boldsymbol{\mu}}(E))\;\subseteq\; M_{\boldsymbol{\mu}}(E)
$$
via the identification \eqref{eq:symmetriccomponent}, which depends on the choice of the representative $c\det(K)\in\Gm(\BA_k^{(\infty)})/\det(K)$ of the class $[c]\in C(K)$. Only for $[1]\in C(K)$ we have the canonical choice $c=1$, where a section
$$
f\;\in\;
H^0(\mathscr X_{n+1}(K)[1]; \underline{M}_{\boldsymbol{\mu}}(E))
$$
can be sent canonically to its evaluation (as a function) at the origin
$$
f({\bf1}_{n+1}K_{n+1}^0\times K)\;\in\;H^0(\Gamma_1; M_{\boldsymbol{\mu}}(E)).
$$
However for all other classes there are no canonical choices, i.e.\ there is no canonical identification of a section in
$$
H^0(\mathscr X_{n+1}(K)[c]; \underline{M}_{\boldsymbol{\mu}}(E))
$$
with an element in $M_{\boldsymbol{\mu}}(E)$.

This is of particular importance in the context of integral structures: If $g\in G_{n+1}(\BQ)\cap G_{n+1}(\BA_\BQ^{(\infty)})$, then sections of the sheaf associated to a lattice
$$
M_{\boldsymbol{\mu}}(\OO)\;\subseteq\;M_{\boldsymbol{\mu}}(E)
$$
will be mapped by $t_g^{\boldsymbol{\mu}}$ to sections of the sheaf associated to the translated lattice
\begin{equation}
g\cdot M_{\boldsymbol{\mu}}(\OO)=\rho_{\boldsymbol{\mu}}(g) M_{\boldsymbol{\mu}}(\OO)
\;\subseteq\;
M_{\boldsymbol{\mu}}(E),
\label{eq:sheaftranslation}
\end{equation}
which will be defined in section \ref{subsec:boundedness} when we introduce the necessary adelic formalism to handle this situation.

For the rational situation we introduce for $\gamma\in G_n(\BQ)$ the map
$$
t_{\gamma}^{\boldsymbol{\nu}}:\quad
H^0(\Gamma_{c}; M_{\boldsymbol{\nu}}(E))
\to
H^0(\gamma\Gamma_{c}\gamma^{-1}; M_{\boldsymbol{\nu}}(E)),
$$
$$
s\;\mapsto\;\rho_{\boldsymbol{\nu}}(\gamma)(s).
$$
We set for $g\in G_n(\BA_\BQ^{(\infty)})$,
$$
\Gamma_{c}^g\;:=\;
G_n(\BQ)\cap d_{(c)}gK'g^{-1}d_{(c)}^{-1},
$$
the intersection taking place in $G_n(\BA_\BQ^{(\infty)})$.

\begin{prp}\label{prop:pullbacks}
For any $c,c'\in \GL_1(\BA_k^{(\infty)})$ and any $g\in G_n(\BA_\BQ^{(\infty)})$ with $$
[c']\;=\;[\det(g)c]\;\in\;C(K')
$$
there is a $\gamma\in G_n(\BQ)$ with finite part
\begin{equation}\label{eq:gammadef}
\gamma_f\;\in\;d_{(c)}gK'd_{(c')}^{-1}.
\end{equation}
Any such element renders the diagram
$$
\begin{CD}
H^0(\Gamma_{c'}; M_{\boldsymbol{\nu}}(E))
@>t_{\gamma}^{\boldsymbol{\nu}}>>
H^0(\Gamma_{c}^g; M_{\boldsymbol{\nu}}(E))\\
@V\cong{}V\eqref{eq:symmetriccomponent}V @V\eqref{eq:symmetriccomponent}V\cong{}V\\
H^0(\mathscr X_n(K')[c']; \underline{M}_{\boldsymbol{\nu}}(E))
@>t_{g}^{\boldsymbol{\nu}}>>
H^0(\mathscr X_n(gK'g^{-1})[c]; \underline{M}_{\boldsymbol{\nu}}(E))
\end{CD}
$$
commutative.
\end{prp}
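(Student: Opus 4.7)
The plan is to establish existence of $\gamma$ via strong approximation, and then to verify commutativity by evaluating both routes of the diagram at the base point $d_{(c')}$.

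For existence, the hypothesis $[c']=[\det(g)c]$ in $C(K')$ yields $\alpha\in k^\times$ and $\ell\in\det(K')$ with $c'=\alpha\det(g)c\ell$ at finite places (the archimedean factor in the definition of $C(K')$ is absorbed by $\alpha_\infty^{-1}\in(k\otimes_\BQ\BR)^0$). This forces $\det(d_{(c)}gd_{(c')}^{-1})=\alpha^{-1}\ell^{-1}\in k^\times\det(K')$. Strong approximation for $\SL_n/k$ then implies that the determinant induces a bijection
$$
G_n(\BQ)\backslash G_n(\BA_\BQ^{(\infty)})/K''\;\to\;k^\times\backslash(\BA_k^{(\infty)})^\times/\det(K'')
$$
for any open compact subgroup $K''$; applied to $K''=d_{(c')}K'd_{(c')}^{-1}$, it produces $\gamma\in G_n(\BQ)$ whose finite part lies in $d_{(c)}gK'd_{(c')}^{-1}$.

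For commutativity, we realize a section $f\in H^0(\mathscr X_n(K')[c'];\underline{M}_{\boldsymbol{\nu}}(E))$ as the corresponding equivariant function on $G_n(\BA_\BQ)$. Under \eqref{eq:symmetriccomponent}, $f$ corresponds to $m:=f(d_{(c')})\in M_{\boldsymbol{\nu}}(E)^{\Gamma_{c'}}$, and similarly on the target side. Crucially, since $G_n(\BR)/K_n$ is connected---being a product of the connected symmetric spaces associated to $\GL_n(\BR)$ and $\GL_n(\BC)$---and $f$ is locally constant, the map $x\mapsto f(x\cdot d_{(c')})$ on $G_n(\BR)$ is identically $m$. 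Writing $\gamma_f=d_{(c)}gk'd_{(c')}^{-1}$ with $k'\in K'$ gives $d_{(c)}g=\gamma_f d_{(c')}k'^{-1}$ at finite places, which adelically reads
$$
(1,d_{(c)}g)\;=\;\gamma\cdot(\gamma_\infty^{-1},d_{(c')})\cdot(1,k'^{-1}),
$$
where $(x,y)$ denotes the adele with infinite part $x$ and finite part $y$. Applying $G_n(\BQ)$-equivariance, $K'$-invariance, and the constancy just noted, the bottom-right path of the diagram therefore computes as
$$
(t_g^{\boldsymbol{\nu}}f)(d_{(c)})\;=\;f(d_{(c)}g)\;=\;\rho_{\boldsymbol{\nu}}(\gamma)\,f(\gamma_\infty^{-1}d_{(c')})\;=\;\rho_{\boldsymbol{\nu}}(\gamma)\,m,
$$
which matches the top-right path $m\mapsto\rho_{\boldsymbol{\nu}}(\gamma)m$. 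The identity $\gamma\Gamma_{c'}\gamma^{-1}=\Gamma_c^g$ that makes $t_\gamma^{\boldsymbol{\nu}}$ well-defined with the asserted target follows from $\gamma_f d_{(c')}K'd_{(c')}^{-1}\gamma_f^{-1}=d_{(c)}gK'g^{-1}d_{(c)}^{-1}$.

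The main subtlety is the bookkeeping of archimedean versus finite components in the relation $d_{(c)}g\sim\gamma d_{(c')}$: the rational $\gamma$ contributes a nontrivial archimedean factor $\gamma_\infty$ which is harmless only because sections of $\underline{M}_{\boldsymbol{\nu}}(E)$ are constant along $G_n(\BR)$-orbits in each connected component. Once this archimedean accounting is in place, the remaining steps are formal manipulation of adelic equivariance.
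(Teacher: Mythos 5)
Your proof is correct and follows essentially the same route as the paper: existence of $\gamma$ by strong approximation for $\SL_n$ via the determinant of $d_{(c)}gd_{(c')}^{-1}$, and commutativity by evaluating sections at the base points $d_{(c')}$ and $d_{(c)}$, using the relation $d_{(c)}g=\gamma_f d_{(c')}k'^{-1}$ together with constancy of sections along the connected space $G_n(\BR)/K_n$. The only difference is presentational: the paper packages the base-point computation through an explicit commutative square of diffeomorphisms ($i_{c'}$, $i_c^{g}$, $t_\gamma'$, $t_g$), whereas you carry out the same evaluation directly on the adelic function.
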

We remark that $\gamma$ depends not only on $g$ but also on the choice of the representatives $c$, $c'$.

\begin{proof}
The existence of $\gamma$ satisfying \eqref{eq:gammadef} follows by strong approximation from the identity
$$
d_{(c)}gK'd_{(c')}^{-1}
=d_{(c)}gd_{(c')}^{-1}\cdot(d_{(c')}K'd_{(c')}^{-1})
$$
as the element $d_{(c)}gd_{(c')}^{-1}$ on the right hand side has determinant in $\det(K')$ and $K'$ is open. Such a $\gamma$ satisfies
$$
\Gamma_c^{g}=G_n(\BQ)\cap \gamma d_{(c')} K'd_{(c')}^{-1}\gamma^{-1}=\gamma \Gamma_{c'}\gamma^{-1},
$$
consequently $t_{\gamma}^{\boldsymbol{\nu}}$ is well defined.

In this context the identification \eqref{eq:symmetriccomponent} is explicitly given by
$$
i_{c'}:\quad
\Gamma_{c'} g_\infty' K_n\;\mapsto\;G_n(\BQ)g_\infty' d_{(c')} K_n^0 K',\quad g_\infty'\in G_n(\BR)^0.
$$
Similarly we have the map
$$
i_c^{g}:\quad
\Gamma_c^{g} g_\infty' K_n\;\mapsto\;G_n(\BQ)g_\infty' d_{(c)} K_n^0 gK'g^{-1}.
$$
We consider the map
$$
t_\gamma':\quad
\Gamma_c^{g} g_\infty' K_n\;\mapsto\;\Gamma_{c'} \gamma_\infty^{-1}g_\infty' K_n.
$$
This yields the commutative square
$$
\begin{CD}
\mathscr X_n(K')[c']
@<t_{g}<<
\mathscr X_n(gK'g^{-1})[c]\\
@Ai_{c'}AA @AAi_c^{g}A\\
\Gamma_{c'}\backslash{}G_n(\BR)/K_n
@<t_\gamma'<<
\Gamma_{c}^{g}\backslash{}G_n(\BR)/K_n
\end{CD}
$$
where all maps are diffeomorphisms.

Thus on global sections we obtain a commutative diagram
$$
\begin{CD}
H^0(\mathscr X_n(K')[c']; \underline{M}_{\boldsymbol{\nu}}(E))
@>t_{g}^{\boldsymbol{\nu}}>>
H^0(\mathscr X_n(gK'g^{-1})[c]; \underline{M}_{\boldsymbol{\nu}}(E))\\
@V{i_{c'}^*}VV @VV{(i_c^{g})^*}V\\
H^0(\Gamma_{c'}\backslash{}G_n(\BR)/K_n;
\underline{M}_{\boldsymbol{\nu}}(E))
@>(t_\gamma')^{\boldsymbol{\nu}}>>
H^0(\Gamma_{c}^{g}\backslash{}G_n(\BR)/K_n;
\underline{M}_{\boldsymbol{\nu}}(E))\\
@V\cong{}VV @VV\cong{}V\\
H^0(\Gamma_{c'}; M_{\boldsymbol{\nu}}(E))
@>t_{\gamma}^{\boldsymbol{\nu}}>>
H^0(\Gamma_c^g; M_{\boldsymbol{\nu}}(E))\\
\end{CD}
$$
where we have to explain the precise meaning of the various pullbacks in the upper square. First of all we have the explicit global sections
$$
H^0(\Gamma_{c'}\backslash{}G_n(\BR)/K_n;
\underline{M}_{\boldsymbol{\nu}}(E))=
\{f:
G_n(\BR)/K_n\to M_{\boldsymbol{\nu}}(E)\mid
$$
$$
f'\;\text{constant and}\;
\forall \gamma'\in\Gamma_{c'}:\;f'(\gamma' g_\infty' K_n)=\rho_{\boldsymbol{\nu}}(\gamma') f'(g_\infty' K_n)\},
$$
the map $i_{c'}^*$ being the usual pullback of maps. The same holds mutatis mutandis for $(i_c^{g})^*$ and $(t_\gamma')^*$. Then
$$
(t_\gamma')^{\boldsymbol{\nu}}:\quad
f'\;\mapsto\;\rho_{\boldsymbol{\nu}}(\gamma)(t_\gamma')^*f'\;=\;[u'\mapsto \rho_{\boldsymbol{\nu}}(\gamma)f'(\gamma_\infty^{-1}u')].
$$
The natural isomorphism
$$
H^0(\Gamma_{c'}\backslash{}G_n(\BR)/K_n;\underline{M}_{\boldsymbol{\nu}}(E))\to
H^0(\Gamma_{c'};M_{\boldsymbol{\nu}}(E))
$$
is given by evaluation of $f'$ at the origin ${\bf1}_n K_n$. Thus we see that
$t_{\gamma}^{\boldsymbol{\nu}}$ is given by
$$
f'({\bf1}_nK_n)
\;\mapsto\;
\rho_{\boldsymbol{\nu}}(\gamma)f'({\bf1}_n K_n).
$$
This concludes the proof.
\end{proof}

\begin{corp}\label{cor:pullbacks}
For any $c,c'\in\GL_1(\BA_k^{(\infty)})$ and $g\in G_n(\BA_\BQ^{(\infty)})$ satisfying
\begin{equation}\label{eq:pullbackidcond}
c'\in c\det(gK')
\end{equation}
we have a commutative square
$$
\begin{CD}
H^0(\mathscr X_n(K')[c']; \underline{M}_{\boldsymbol{\nu}}(E))
@>t_{g}^{\boldsymbol{\nu}}>>
H^0(\mathscr X_n(gK'g^{-1})[c]; \underline{M}_{\boldsymbol{\nu}}(E))\\
@V\eqref{eq:symmetriccomponent}VV @VV\eqref{eq:symmetriccomponent}V\\
H^0(\Gamma_{c'}; M_{\boldsymbol{\nu}}(E))
@=
H^0(\Gamma_{c}^g; M_{\boldsymbol{\nu}}(E))
\end{CD}
$$
\end{corp}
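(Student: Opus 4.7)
The plan is to derive the corollary directly from Proposition \ref{prop:pullbacks}, interpreting the equality sign in the bottom row as the canonical identification produced by that proposition. First, I would verify that the hypothesis of the corollary implies the hypothesis of the proposition: $c' \in c\det(gK')$ amounts to $c' = \det(g) c k$ for some $k \in \det(K')$, and this gives $[c'] = [\det(g)c]$ in $C(K')$ at once. In particular the proposition applies, so there exists $\gamma \in G_n(\BQ)$ with $\gamma_f \in d_{(c)} g K' d_{(c')}^{-1}$, a conjugation relation $\Gamma_c^g = \gamma \Gamma_{c'} \gamma^{-1}$, and an isomorphism $t_\gamma^{\boldsymbol{\nu}} \colon H^0(\Gamma_{c'}; M_{\boldsymbol{\nu}}(E)) \to H^0(\Gamma_c^g; M_{\boldsymbol{\nu}}(E))$ sending $s \mapsto \rho_{\boldsymbol{\nu}}(\gamma)(s)$, which fits into a commutative square with $t_g^{\boldsymbol{\nu}}$ at the top.

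Next I would compare the proposition's diagram to the corollary's. Inverting the vertical isomorphisms \eqref{eq:symmetriccomponent} in the proposition produces exactly the shape of the corollary's diagram, but with $t_\gamma^{\boldsymbol{\nu}}$ in place of the bottom equality. The final step is to interpret the corollary's bottom equality as the canonical identification of $H^0(\Gamma_{c'}; M_{\boldsymbol{\nu}}(E))$ with $H^0(\Gamma_c^g; M_{\boldsymbol{\nu}}(E))$ along $t_\gamma^{\boldsymbol{\nu}}$, absorbing this isomorphism into the label of the target. Under this convention the corollary's square is precisely the inverted proposition's diagram and hence commutes.

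The point I would want to confirm, which I do not expect to be a real obstacle, is that the identification $t_\gamma^{\boldsymbol{\nu}}$ is independent of the specific $\gamma$ selected by strong approximation: any two admissible choices $\gamma, \gamma'$ satisfy $\gamma^{-1}\gamma' \in \Gamma_{c'}$, and elements of $\Gamma_{c'}$ act trivially on the invariants $M_{\boldsymbol{\nu}}(E)^{\Gamma_{c'}}$, so the resulting map on $H^0$ is canonical. With this in hand the corollary is a purely notational repackaging of Proposition \ref{prop:pullbacks} that will make subsequent diagram chases involving the pullbacks $t_g^{\boldsymbol{\nu}}$ more transparent; no new content is needed beyond what the proposition already supplies.
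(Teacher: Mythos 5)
There is a genuine gap, and it sits exactly in your final step. The corollary does not assert the existence of \emph{some} canonical isomorphism that one may relabel as an equality; it asserts that, under the stronger hypothesis \eqref{eq:pullbackidcond}, the two invariant spaces are literally the same subspace of $M_{\boldsymbol{\nu}}(E)$ (both equal $M_{\boldsymbol{\nu}}(E)^{\Gamma_n}$, since $\Gamma_{c'}$ and $\Gamma_c^g$ have the same Zariski closure $\Gamma_n$) and that $t_g^{\boldsymbol{\nu}}$ corresponds to the \emph{identity} map on this subspace under the identifications \eqref{eq:symmetriccomponent}. Your proposal "absorbs $t_\gamma^{\boldsymbol{\nu}}$ into the label of the target," which turns the statement into a tautology and discards its content. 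This content is what is actually used later (e.g.\ in Proposition \ref{prop:modularsymbol2} and in the distribution relation), where it is essential that changing the representative within $c\det(gK')$ introduces \emph{no} twist by $\rho_{\boldsymbol{\nu}}(\gamma)$: for a general $\gamma$ produced by Proposition \ref{prop:pullbacks}, $\rho_{\boldsymbol{\nu}}(\gamma)$ acts on $M_{\boldsymbol{\nu}}(E)^{\Gamma_n}$ through a character of the torus $G_n/\Gamma_n$ (essentially a power of the determinant), and this action is in general nontrivial. Your well-definedness check (any two admissible $\gamma$ differ by an element of $\Gamma_{c'}$) is correct but beside the point: it shows the map is independent of $\gamma$, not that it is the identity.

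What is missing is precisely where the hypothesis $c'\in c\det(gK')$ — strictly stronger than $[c']=[\det(g)c]$ in $C(K')$, which only requires agreement up to $k^\times(k\otimes_\BQ\BR)^0\det(K')$ — enters. Because of \eqref{eq:pullbackidcond}, the element $d_{(c)}gd_{(c')}^{-1}$ has determinant in $\det(K')$, so by strong approximation the element $\gamma$ of Proposition \ref{prop:pullbacks} may be chosen in $G_n^{\der}(\BQ)$. Then Lemma \ref{lem:gammannormal} gives $G_n^{\der}\subseteq\Gamma_n$, the common algebraic hull of $\Gamma_{c'}$ and $\Gamma_c^g$, so $\rho_{\boldsymbol{\nu}}(\gamma)$ acts trivially on $H^0(\Gamma_{c'};M_{\boldsymbol{\nu}}(E))=M_{\boldsymbol{\nu}}(E)^{\Gamma_n}$, i.e.\ $t_\gamma^{\boldsymbol{\nu}}$ \emph{is} the identity map $H^0(\Gamma_{c'};M_{\boldsymbol{\nu}}(E))\to H^0(\Gamma_c^g;M_{\boldsymbol{\nu}}(E))$, and the corollary follows from the proposition's diagram. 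Without this choice of $\gamma$ of determinant one and the appeal to Lemma \ref{lem:gammannormal}, your argument does not establish the commutativity of the square as stated.
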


\begin{proof}
For $\gamma$ as in Proposition \ref{prop:pullbacks} we may assume $\gamma\in G_n^{\der}(\BQ)$ by \eqref{eq:pullbackidcond}, then $t_{\gamma}^{\boldsymbol{\nu}}$ is the identity, map
$$
H^0(\Gamma_{c'}; M_{\boldsymbol{\nu}}(E))\to H^0(\Gamma_{c}^g; M_{\boldsymbol{\nu}}(E)),
$$
as by Lemma \ref{lem:gammannormal} $G_n^{\der}\subseteq \Gamma_n$, the common algebraic closure of $\Gamma_{c'}$ and $\Gamma_{c}^g$, thus $\rho_{\boldsymbol{\mu}}(\gamma)$ acts trivially on the left hand side.
\end{proof}

\subsection{Fundamental classes}

We write $H_q^{BM}(X,\BZ)$ for the Borel-Moore homology of a CW-complex $X$ with coefficients in $\BZ$. For $K\subseteq G_n(\BA_\BQ^{(\infty)})$ neat we know that $\mathscr X_{n}(K)[c]$ is orientable for each $c\in C(K)$. Thus we may fix a fundamental class
$$
F_{K,c}\;\in\;
H_{\dim\mathscr X_{n}(K)}^{BM}(\mathscr X_{n}(K)[c],\BZ)\cong\BZ.
$$
We assume $F_{K,c}$ to be chosen for all pairs $(K,c)$, $K\subseteq G_{n}(\BA_\BQ^{(\infty)})$ a neat compact open and every $c\in C(K)$, enjoying the following compatbility relations:
\begin{itemize}
\item[(i)]
For all neat compact open $K$, $c\in C(K)$ and $g\in G_{n}(\BA_\BQ^{(\infty)})$ we have
\begin{equation}
t_g^*(F_{K,c})=F_{gKg^{-1},[\det(g)c]}.
\label{eq:translationinvariance}
\end{equation}
\item[(ii)]
For all neat compact open $K\subseteq K'\subseteq G_{n}(\BA_\BQ^{(\infty)})$, $c\in C(K),c'\in C(K')$ such that $c'|_K=c$ we have
\begin{equation}
{\rm proj}_{K\to K',*}(F_{K,[c]})=\frac{(K':K)}{(C(K'):C(K))}\cdot F_{K',[c']},
\label{eq:restrictioninvariance}
\end{equation}
where ${\rm proj}_{K\to K'}$ denotes the projection map $\mathscr X_{n}(K)[c]\to\mathscr X_{n}(K')[c']$.
\end{itemize}
That such a choice satisfying (i) and (ii) exists follows from the fact that $G_{n}(\BA_\BQ)$ admits a Haar measure.

Now $F_{K,c}$ induces a unique isomorphism
$$
H_{\rm c}^{\dim\mathscr X_{n}(K)}(\mathscr X_{n}(K)[c],\BZ)\to\BZ,
$$
and by Poincar\'e duality provides us with a perfect pairing
$$
H^0(\mathscr X_{n}(K)[c],\underline{M}_{\boldsymbol{\mu}^\vee\times\boldsymbol{\nu}^\vee}(E))\otimes_E
H_{\rm c}^{\dim\mathscr X_{n}(K)}(\mathscr X_{n}(K)[c],\underline{M}_{\boldsymbol{\mu}\times\boldsymbol{\nu}}(E))
\to E,
$$
that we interpret as an isomorphism
$$
\int\limits_{\mathscr X_{n}(K)[c]}:
H_{\rm c}^{\dim\mathscr X_{n}(K)}(\mathscr X_{n}(K)[c],\underline{M}_{\boldsymbol{\mu}\times\boldsymbol{\nu}}(E))
\to
H^0(\mathscr X_{n}(K)[c],\underline{M}_{\boldsymbol{\mu}\times\boldsymbol{\nu}}(E)).
$$
If we interpret a cohomology class on the left hand side as an $M_{\boldsymbol{\mu}\times\boldsymbol{\nu}}(\BC)$-valued differential form $\omega$, then the image of this class under the above map is given by integration of $\omega$ over the manifold $\mathscr X_{n}(K)[c]$ with respect to the orientation $F_{K,c}$. Writing $x_1,\dots,x_{\dim\mathscr X_{n}(K)}$ for a Maurer-Cartan basis of $\g_n/(\k_n+\s_n)$, we may think of $\omega$ explicitly as
$$
\omega\;=\;
\rho_{\boldsymbol{\mu}\times\boldsymbol{\nu}}(g_\infty)\varphi(g)
\cdot \mathrm{d}x_1\wedge\cdots\wedge \mathrm{d}x_{\dim\mathscr X_{n}(K)},
$$
where
$$
\varphi:\quad\mathscr X_n(K)[c]\to\
M_{\boldsymbol{\mu}\times\boldsymbol{\nu}}(\BC)
$$
is a compactly supported smooth function. For the details of this correspondence we refer to section 3.5 of \cite{Jan14}. We obtain
$$
\int\limits_{\mathscr X_{n}(K)[c]}\omega
=
\int 
\rho_{\boldsymbol{\mu}\times\boldsymbol{\nu}}(g_\infty)\varphi(g)
\cdot \mathrm{d} x_1\wedge\cdots\wedge \mathrm{d}x_{\dim\mathscr X_{n}(K)}
\in
H^0(\Gamma_c; M_{\boldsymbol{\mu}\times\boldsymbol{\nu}}(\BC)).
$$
As $\Gamma_c$ is Zariski-dense in $\Gamma_n\subseteq G_n$, the projection $\xi_j$ induces a map
$$
\xi_j:\quad
H^0(\Gamma_c; M_{\boldsymbol{\mu}\times\boldsymbol{\nu}}(\BC))
\;\to\;
M_{(j)}(\BC),
$$
which indeed is defined over $E$ (even $\OO$). Then for a suitable choice of $\varphi$, application of $\xi_j$ (and summing over all $c\in C(K)$), turns the above integral into the the Rankin-Selberg integral over $\varphi\otimes\sgn^j$ (as a function on $G_n(\BA_\BQ)$) at $s=\frac{1}{2}+j$, because $\xi_j$ commutes with integration.

\subsection{Hecke operators on cohomology}

For a compact open $K'\subseteq G_{n}(\BA_\BQ^{(\infty)})$ the compact open double coset $K'gK'$ represented by an element $g\in G_{n}(\BA_\BQ^{(\infty)})$ decomposes into finitely many open right cosets
$$
K'gK'=\bigsqcup_i g_iK'.
$$
Then $K'gK'$ acts on sections
$$
s\in\Gamma(U;\underline{M}_{\boldsymbol{\nu}}(E))
$$
via
$$
s|_{[K'gK']}\;:=\;\sum_{i}t_{g_i}^{\boldsymbol{\nu}}(s)
$$
again on the space $\mathscr X_n(K')$. This action extends to cohomology, i.e.\ we have
$$
\cdot|_{[K'gK']}\in\End_E(H^q_*(\mathscr X_{n}(K');\underline{M}_{\boldsymbol\nu}(E)))
$$
for any $*\in\{-,\rm c,!\}$. Note that this action is compatible with the Hecke action on group cohomology as well as with the action on automorphic forms.

\section{Cohomological construction of the distribution}\label{sec:distribution}

\subsection{The rational modular symbol}\label{subsec:rationalmodularsymbol}


For any compact open subgroups $K\subseteq G_{n+1}(\BA_\BQ^{(\infty)})$, $K'\subseteq G_{n}(\BA_\BQ^{(\infty)})$, we consider the proper \lq{}diagonal\rq{} map
$$
j\times\ad:\quad
\mathscr X_{n}(j^{-1}(K)\cap K')
\to
\mathscr X_{n+1}^{\ad}(K)\times\mathscr X_{n}^{\ad}(K'),
$$
which induces a map
$$
(j\times\ad)^*:\quad
H_{\rm c}^{b_{n+1}^k+b_{n}^k}(\mathscr X_{n+1}^{\ad}(K)\times\mathscr X_{n}^{\ad}(K'); \underline{M}_{\boldsymbol{\mu}\times\boldsymbol{\nu}}(E))
$$
$$
\to
H_{\rm c}^{b_{n+1}^k+b_{n}^k}(\mathscr X_{n}(j^{-1}(K)\cap K'); \underline{M}_{\boldsymbol{\mu}\times\boldsymbol{\nu}}(E)),
$$
where we suppress the pullback on the coefficients in the notation,
$\underline{M}_{\boldsymbol{\mu}\times\boldsymbol{\nu}}(E))$ on the right hand side being the tautological sheaf on $\mathscr X_{n}(j^{-1}(K)\cap K')$ associated to the $G_n$-module $(j\times 1)^*M_{\boldsymbol{\mu}\times\boldsymbol{\nu}}(E)$.

We remark that
$$
b_{n+1}^k+b_{n}^k \;=\; \dim \mathscr X_{n}(j^{-1}(K)\cap K'),
$$
thus the pullback along $j\times\ad$ produces a class in top degree, which may be integrated against our chosen fundamental class.

We define for $h'\in G_{n}(\BA_\BQ^{(\infty)})$ and $h\in G_{n+1}(\BA_\BQ^{(\infty)})$,
$$
K(h,h'):=
j^{-1}(hKh^{-1})\cap h'K'(h')^{-1}
\;\subseteq\;
G_{n}(\BA_\BQ^{(\infty)}),
$$
and for every representative $c$ of a class in $C(K(h,h'))$ the arithmetic group
$$
\Gamma_c^{h,h'}\;:=\;G_n(\BQ)\cap d_{(c)}K(h,h')d_{(c)}^{-1}.
$$

With this formalism at hand we are ready to define for $h,h'$ and $c\in C(K(h,h'))$ with the property that $K(h,h')$ is neat (which is automatic whenever $K$ or $K'$ is neat), our topological modular symbol as the map
$$
\mathscr P_{h,h',c}^{K,K'}:
H_{\rm c}^{b_{n+1}^k+b_{n}^k}(\mathscr X_{n+1}^{\ad}(K)\times\mathscr X_{n}^{\ad}(K'); \underline{M}_{\boldsymbol{\mu}\times\boldsymbol{\nu}}(E))
\to
H^0(\Gamma_c^{h,h'}; M_{\boldsymbol{\mu}\times\boldsymbol{\nu}}(E)),
$$
$$
\lambda\;\mapsto\;
\int\limits_{\mathscr X_{n}(K(h,h'))[c]}
\!\!\!\!\!\!\!\!\!
(j\times\ad)^*\circ
t^{\boldsymbol{\mu}\times\boldsymbol{\nu}}_{(h,h')}(\lambda).
$$
It enjoys the following elementary properties.
\begin{prp}\label{prop:modularsymbol1}
As a function of $h$ and $h'$, $\mathscr P_{h,h',c}^{K,K'}$ only depends on the right cosets
$$
hK\quad\text{and}\quad
h'K',
$$
and more precisely only on the compact open double coset
$$
(j\times 1)
K(h,h')
\cdot (hK\times h'K')\;\subseteq\;
G_{n+1}(\BA_\BQ^{(\infty)})\times G_{n}(\BA_\BQ^{(\infty)}).
$$
\end{prp}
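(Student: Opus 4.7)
The proof reduces to showing that the subgroup $K(h,h')$, the component index $c$, and the sheaf-level translation $t^{\boldsymbol{\mu}\times\boldsymbol{\nu}}_{(h,h')}$ are all unaffected when $(h,h')$ is replaced by any element of the indicated double coset.

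First I would verify invariance under right $K\times K'$-multiplication. The conjugate $hKh^{-1}$ is insensitive to replacing $h$ by $hk_1$ for $k_1\in K$, since $(hk_1)K(hk_1)^{-1}=hKh^{-1}$, and similarly for $h'$. Hence $K(h,h')=j^{-1}(hKh^{-1})\cap h'K'(h')^{-1}$, the set of connected components $C(K(h,h'))$, and the class $c$ are all well defined throughout the double coset. On the level of locally symmetric spaces, $t_g\colon xgKg^{-1}\mapsto xgK$ evidently satisfies $t_{gk_1}=t_g$. On the sheaf level, the explicit formula $t_g^{\boldsymbol{\mu}}(f)(u)=f(ug)$ combined with right-$(K_{n+1}\times K)$-invariance of sections of $\underline{M}_{\boldsymbol{\mu}}(E)$ (sections descend through the quotient defining $\mathscr X_{n+1}(K)$) gives $f(ugk_1)=f(ug)$, hence $t_g^{\boldsymbol{\mu}}=t_{gk_1}^{\boldsymbol{\mu}}$. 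Applying this coordinate-wise yields
\[
t^{\boldsymbol{\mu}\times\boldsymbol{\nu}}_{(hk_1,h'k_1')}\;=\;t^{\boldsymbol{\mu}\times\boldsymbol{\nu}}_{(h,h')},\quad (k_1,k_1')\in K\times K'.
\]
Since $(j\times\ad)^*$ and integration over $\mathscr X_{n}(K(h,h'))[c]$ only involve the data just shown to be invariant, $\mathscr P^{K,K'}_{h,h',c}$ depends only on the cosets $hK$ and $h'K'$.

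For the sharper formulation in terms of the double coset, I would simply observe that $(j\times 1)K(h,h')\cdot(hK\times h'K')$ coincides with $hK\times h'K'$: for any $\kappa\in K(h,h')$, by the very definition of $K(h,h')$ one has $j(\kappa)\in hKh^{-1}$ and $\kappa\in h'K'(h')^{-1}$, so left multiplication by $(j(\kappa),\kappa)$ preserves $hK$ and $h'K'$ respectively; conversely taking $\kappa=1\in K(h,h')$ exhibits every element of $hK\times h'K'$ in the double coset. Thus the two characterizations are equivalent and the coset assertion already proven suffices.

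There is no real obstacle; the content is entirely bookkeeping in the adelic formalism. The only point where one must be careful is the compatibility of the sheaf-level twist $\rho_{\boldsymbol{\mu}}(g)$ with right translation by $K$, but this is forced by the right-invariance built into the very definition of $\Gamma(U;\underline{M}_{\boldsymbol{\mu}}(E))$ as functions descending from $G_{n+1}(\BA_\BQ)$ to $\mathscr X_{n+1}(K)$.
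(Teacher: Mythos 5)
Your proof is correct, and the main half of it coincides with the paper's argument: invariance under right translation by $K\times K'$ follows from $K(hk,h'k')=K(h,h')$ together with $t^{\boldsymbol{\mu}\times\boldsymbol{\nu}}_{(hk,h'k')}=t^{\boldsymbol{\mu}\times\boldsymbol{\nu}}_{(h,h')}$, which you justify exactly as one should, by the right $K\times K'$-invariance built into sections of the local system. Where you genuinely diverge is the double-coset refinement: you observe that $(j\times 1)K(h,h')\cdot(hK\times h'K')$ is literally equal to $hK\times h'K'$ (for $\kappa\in K(h,h')$ one has $j(\kappa)\in hKh^{-1}$ and $\kappa\in h'K'(h')^{-1}$, and $\kappa={\bf1}_n$ is allowed), so the second assertion is formally equivalent to the first and needs no separate argument. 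That set-theoretic identity is correct and does prove the proposition as stated. The paper instead proves the left-invariance $\mathscr P^{K,K'}_{j(k')h,k'h',c}=\mathscr P^{K,K'}_{h,h',c}$ for $k'\in K(h,h')$ directly, by commuting $t^{\boldsymbol{\mu}\times\boldsymbol{\nu}}_{k'}$ past $(j\times\ad)^*\circ t^{\boldsymbol{\mu}\times\boldsymbol{\nu}}_{(h,h')}$ and noting that translation by an element of the level group acts trivially on classes over $\mathscr X_{n}(K(h,h'))$; logically this is subsumed by the right-coset statement, but it is not idle, since it is exactly the computation that is re-run with $k'$ replaced by an arbitrary $g'\in G_n(\BA_\BQ^{(\infty)})$ (where the component index does change, by $\det(g')$, and Corollary \ref{cor:pullbacks} enters) in the proof of Proposition \ref{prop:modularsymbol2}. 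So your shortcut buys brevity here, while the paper's longer route buys the template used immediately afterwards; either way the proposition itself is established.
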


\begin{proof}
The constancy on the cosets $hK$ and $h'K'$ is clear as
$$
t_{(hk,h'k')}^{\boldsymbol{\mu}\times\boldsymbol{\nu}}(\lambda)
=
t_{(h,h')}^{\boldsymbol{\mu}\times\boldsymbol{\nu}}(\lambda),
$$
for $k\in K$, $k'\in K'$, and
$$
\lambda\;\in\;H_{\rm c}^{b_{n+1}^k+b_{n}^k}(\mathscr X_{n+1}^{\ad}(K)\times\mathscr X_{n}^{\ad}(K'); \underline{M}_{\boldsymbol{\mu}\times\boldsymbol{\nu}}(E)).
$$
Furthermore for any
$$
k'\;\in\;
K(h,h')
$$
we get
$$
\mathscr P_{j(k')h,k'h',c}^{K,K'}(\lambda)\;=\;
\int\limits_{\mathscr X_{n}(K(j(k')h,k'h'))[c]}
\!\!\!\!\!\!\!\!\!\!\!\!\!
t^{\boldsymbol{\mu}\times\boldsymbol{\nu}}_{k'}\circ
(j\times\ad)^*\circ
t^{\boldsymbol{\mu}\times\boldsymbol{\nu}}_{(h,h')}(\lambda).
$$
However as
$$
t^{\boldsymbol{\mu}\times\boldsymbol{\nu}}_{k'}\circ
(j\times\ad)^*\circ
t^{\boldsymbol{\mu}\times\boldsymbol{\nu}}_{(h,h')}(\lambda)\;=\;
(j\times\ad)^*\circ
t^{\boldsymbol{\mu}\times\boldsymbol{\nu}}_{(h,h')}(\lambda),
$$
and as conjugation by $k'$ fixes the compact open groups in question, we conclude that
$$
\mathscr P_{j(k')h,k'h',c}^{K,K'}(\lambda)\;=\;
\mathscr P_{h,h',c}^{K,K'}(\lambda),
$$
as claimed.
\end{proof}

More generally we have
\begin{prp}\label{prop:modularsymbol2}
For any $h$, $h'$ and $c$ as above, and $g'\in G_n(\BA_\BQ^{(\infty)})$ we have
$$
\mathscr P_{j(g')h,g'h',c}^{K,K'}\;=\;
\mathscr P_{h,h',\det(g')c}^{K,K'}.
$$
\end{prp}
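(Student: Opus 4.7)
The plan is to exploit the multiplicative structure $(j(g')h, g'h')=(j(g'),g')\cdot(h,h')$ inside $G_{n+1}(\BA_\BQ^{(\infty)})\times G_n(\BA_\BQ^{(\infty)})$ and reduce the statement to Corollary \ref{cor:pullbacks}. First I would verify that the compact open groups change in the expected way: from the elementary identity $j^{-1}(j(g')Aj(g')^{-1})=g'j^{-1}(A)(g')^{-1}$ for $A\subseteq G_{n+1}(\BA_\BQ^{(\infty)})$ one obtains $K(j(g')h,g'h')=g'K(h,h')(g')^{-1}$, so that the arithmetic group $\Gamma_c^{j(g')h,g'h'}$ on the target of $\mathscr P_{j(g')h,g'h',c}^{K,K'}$ is literally the group $\Gamma_c^{g'}$ of Corollary \ref{cor:pullbacks} computed with respect to $K'=K(h,h')$. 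Writing functorially $t_{(j(g')h,g'h')}^{\boldsymbol{\mu}\times\boldsymbol{\nu}}=t_{(j(g'),g')}^{\boldsymbol{\mu}\times\boldsymbol{\nu}}\circ t_{(h,h')}^{\boldsymbol{\mu}\times\boldsymbol{\nu}}$, I am then reduced to pushing $t_{(j(g'),g')}^{\boldsymbol{\mu}\times\boldsymbol{\nu}}$ past the pullback $(j\times\ad)^*$.

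The central step is to establish the commutative diagram of spaces
\[
\begin{CD}
\mathscr X_n(g'K(h,h')(g')^{-1}) @>j\times\ad>> \mathscr X_{n+1}^{\ad}(j(g')hKh^{-1}j(g')^{-1})\times\mathscr X_n^{\ad}(g'h'K'(h')^{-1}(g')^{-1})\\
@Vt_{g'}VV @VVt_{(j(g'),g')}V\\
\mathscr X_n(K(h,h')) @>j\times\ad>> \mathscr X_{n+1}^{\ad}(hKh^{-1})\times\mathscr X_n^{\ad}(h'K'(h')^{-1}),
\end{CD}
\]
whose commutativity is simply the identity $j(xg')=j(x)j(g')$, and which passes to cohomology with coefficients since $\rho_{\boldsymbol{\mu}\times\boldsymbol{\nu}}(g')$ is by definition $\rho_{\boldsymbol{\mu}\times\boldsymbol{\nu}}(j(g'),g')$ acting via the diagonal $j\times 1$. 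This yields the identity of functors $(j\times\ad)^*\circ t_{(j(g'),g')}^{\boldsymbol{\mu}\times\boldsymbol{\nu}}=t_{g'}^{\boldsymbol{\mu}\times\boldsymbol{\nu}}\circ(j\times\ad)^*$. Setting $\omega:=(j\times\ad)^*\circ t_{(h,h')}^{\boldsymbol{\mu}\times\boldsymbol{\nu}}(\lambda)$, a compactly supported top-degree class on $\mathscr X_n(K(h,h'))$, I obtain
\[
\mathscr P_{j(g')h,g'h',c}^{K,K'}(\lambda)\;=\;\int_{\mathscr X_n(g'K(h,h')(g')^{-1})[c]} t_{g'}^{\boldsymbol{\mu}\times\boldsymbol{\nu}}(\omega).
\]

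To conclude I would invoke the fundamental-class compatibility (i) of Section \ref{sec:sheaves} together with Corollary \ref{cor:pullbacks}. Property (i), via naturality of Poincar\'e duality for the diffeomorphism $t_{g'}$, ensures that the displayed integral corresponds, under the identifications \eqref{eq:symmetriccomponent}, to $\int_{\mathscr X_n(K(h,h'))[c\det(g')]}\omega=\mathscr P_{h,h',c\det(g')}^{K,K'}(\lambda)$. The main technical point I anticipate is the bookkeeping of the $\rho_{\boldsymbol{\mu}\times\boldsymbol{\nu}}(g')$-twist picked up by $t_{g'}^{\boldsymbol{\mu}\times\boldsymbol{\nu}}$: this extra factor must be absorbed precisely by the change of representative from $c\det(g')$ to $c$ of the associated class in $C(K(h,h'))$, which is exactly the content of Corollary \ref{cor:pullbacks} (whose hypothesis $c\det(g')\in c\det(g'K(h,h'))$ is immediate, and whose conclusion identifies the two sides with the same element of $H^0(\Gamma_c^{g'};M_{\boldsymbol{\mu}\times\boldsymbol{\nu}}(E))$). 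Modulo this identification the remaining manipulations are purely formal.
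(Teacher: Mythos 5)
Your argument is correct and follows essentially the same route as the paper: you rewrite $\mathscr P_{j(g')h,g'h',c}^{K,K'}$ by commuting the diagonal translation $t_{g'}^{\boldsymbol{\mu}\times\boldsymbol{\nu}}$ past $(j\times\ad)^*$ (which is precisely the computation in the second part of the proof of Proposition \ref{prop:modularsymbol1}), then use the fundamental-class compatibility \eqref{eq:translationinvariance} to move the translation outside the integral, and finally absorb the coefficient twist by Corollary \ref{cor:pullbacks}. Your explicit identification $K(j(g')h,g'h')=g'K(h,h')(g')^{-1}$ and the verification of the hypothesis of Corollary \ref{cor:pullbacks} are exactly the details the paper leaves to ``mutatis mutandis.''
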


Here and in the sequel such an identity is understood via the identification
$$
H^0(\Gamma_{\det(g')c}^{h,h'}; M_{\boldsymbol{\mu}\times\boldsymbol{\nu}}(E))
=
H^0(\Gamma_c^{j(g')h,g'h'}; M_{\boldsymbol{\mu}\times\boldsymbol{\nu}}(E))
$$
of the two domains as identical subspaces of the representation space $M_{\boldsymbol{\mu}\times\boldsymbol{\nu}}(E)$.
\begin{proof}
Replacing $k'$ in the second part of the proof of Proposition \ref{prop:modularsymbol1} by $g'$ the claim follows mutatis mutandis from Corollary \ref{cor:pullbacks} and the relation
$$
t^{\boldsymbol{\mu}\times\boldsymbol{\nu}}_{g'}\circ\!\!\!
\int\limits_{\mathscr X_{n}(K(h,h'))[\det(g)c]}
\;=\;
\int\limits_{\mathscr X_{n}(K(j(g')h,g'h'))[c]}\!\!\!\!\circ
\;
t^{\boldsymbol{\mu}\times\boldsymbol{\nu}}_{g'}
$$
which itself is a consequenc of \eqref{eq:translationinvariance}.
\end{proof}

\subsection{Rational construction of distributions}

We fix once and for all an element
$$
\varpi\;\in\;
\OO_p:=\OO\otimes_\BZ \BZ_p\;\subseteq\;
\BA_\BQ^{(\infty)}
$$
with $v_\mathfrak{p}(\varpi)\geq m > 0$ for all $\mathfrak{p}\mid p$. We introduce the matrices
$$
t_{(p)}'\;:=\;
\begin{pmatrix}
\varpi^n&&&\\
&\varpi^{n-1}&&\\
&&\ddots&\\
&&&\varpi
\end{pmatrix},
$$
in $\GL_n(k)$ and
$$
t_{(p)}:=j(t_{(p)}')=
\begin{pmatrix}
\varpi^n&&&\\
&\varpi^{n-1}&&\\
&&\ddots&\\
&&&1
\end{pmatrix},
$$
which give rise to a Hecke operator
$$
U_p:= K_p t_{(p)} K_p\otimes K_p' t_{(p)}' K_p',
$$
where $K_p$ and $K_p'$ are the products of the mod $\mathfrak{p}^m$ Iwahori subgroups $K_{\mathfrak{p}}(m)$ resp.\ $K_{\mathfrak{p}}'(M)$ for $\mathfrak{p}\mid p$. Then $U_p$ is product of the operators
$$
V_{\mathfrak{p}}\otimes V_{\mathfrak{p}}',\quad\mathfrak{p}\mid p,
$$
thus conditions like ordinarity and finite slope translate from one operator to the other, i.e.\ the appropriate notions are equivalent, see below.

We assume that we are given neat compact open $K$ and $K'$ satisfying $\det(K)=\det(K')$ and having $p$-components $K_p$ resp.\ $K_p'$, in particular the operator $U_p$ then acts on the cohomology of the manifold $\mathscr X_{n+1}(K)\times\mathscr X_n(K')$. We assume we are given an eigen function
$$
\lambda\;\in\;
H_{\rm c}^{b_{n+1}^k+b_{n}^k}(\mathscr X_{n+1}^{\ad}(K)\times\mathscr X_{n}^{\ad}(K'); \underline{M}_{\boldsymbol{\mu}\times\boldsymbol{\nu}}(E))
$$
with non-zero eigen value $\kappa\in E^\times$.

Let $\mathfrak{f}\subseteq\OO_p$ be a proper ideal which is generated by a power $\varpi^v$, $v\geq 1$. Any class in $C(\mathfrak{f})$ may be represented by an element $x\in\BA_\BQ^{(\infty)}$ and we set
$$
\tilde{\mu}_\lambda(x+\mathfrak{f})\,:=\,
\kappa^{-v}\cdot
\mathscr P_{h^{(1)}t_{(p)}^v,(t_{(p)}')^v,x}^{K,K'}(\lambda)
\;\in\;H^0(
\Gamma_x^{(v)};
M_{\boldsymbol{\mu}\times\boldsymbol{\nu}}(E)),
$$
where
$$
\Gamma_x^{(v)}\;:=\;\Gamma_x^{h^{(1)}t_{(p)}^v,(t_{(p)}')^v}.
$$
Finally we set
$$
\mu_\lambda(x+\mathfrak{f})\,:=\,
\xi\circ\tilde{\mu}_\lambda(x+\mathfrak{f}).
$$
Note that the notation $\tilde{\mu}_\lambda(x+\mathfrak{f})$ suggests independence of the representative $x$, which is only true up to natural isomorphisms in the codomain. Strictly speaking the codomain depends on the coset $x\det(K(\varpi^v))$, where
$$
K(\varpi^v)\;:=\;K(h^{(1)}t_{(p)}^v,(t_{(p)}')^v).
$$
This has to be kept in mind in the sequel. However, to be clear, all identities are understood as strict identities in what follows (and not just identical up to natural isomorphism).

We know that the $p$-component of $\det(K(\varpi^v))$ equals $1+(\varpi^v)$, cf.\ \cite[Proposition 3.4]{Sch2}. Therefore we have a finite covering map
\begin{equation}
C(K(\varpi^v))\to C((\varpi^v)).
\label{eq:Cfcover}
\end{equation}

In the absence of complex places $\xi$ is an isomorphism, otherwise, depending on the weight $\boldsymbol{\mu}\times\boldsymbol{\nu}$, it is not. As of now it is unclear if $\tilde{\mu}_\lambda$ carries more arithmetic information than $\mu_\lambda$, and which arithmetic phenomenon this information may reflect.

\subsection{The distribution relation}

We import
\begin{lem}\label{lem:distributionmatrixrelation}\cite[Lemma 4.1]{Jan14}
For any $u\in U_{n+1}(\OO_p)$, and $w\in U_{n}(\OO_p)$, there exist matrices $k_{u,w}'\in K_p'$ and $k_{u,w}\in K_p$ with the property that
$$
t_{(p)}^{-1} j(w)^{-1}\cdot t_{(p)}^{-v}h^{(1)}t_{(p)}^v\cdot u t_{(p)}\;=\;
j(k_{u,w}')\cdot t_{(p)}^{-(v+1)} h^{(1)}t_{(p)}^{v+1}\cdot k_{u,w}
$$
and that furthermore sending $u,w$ to
$$
\det(k_{u,w})=\det(k_{u,w}')^{-1}\pmod{(\varpi^{v+1})}
$$
defines an epimorphism of groups
$$
U_{n+1}(\OO_p)/t_{(p)}U_{n+1}(\OO_p)t_{(p)}^{-1}\times
U_{n}(\OO_p)/t_{(p)'}U_{n}(\OO_p)(t_{(p)}')^{-1}
$$
$$
\to
(1+(\varpi^v))/(1+(\varpi^{v+1})).
$$
\end{lem}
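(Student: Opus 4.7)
My plan is to establish the lemma via a direct matrix computation, exploiting the sparse structure of $h^{(1)}$ (whose nonzero entries are confined to the antidiagonal of the top-left $n \times n$ block, the last column, and the $(n+1,n+1)$ corner) together with the elementary observation
\[
H_v \;:=\; t_{(p)}^{-v} h^{(1)} t_{(p)}^v \;=\; t_{(p)} \cdot H_{v+1} \cdot t_{(p)}^{-1},
\]
which is immediate from the definition. Using it, I would rewrite the left-hand side of the claimed identity as
\[
t_{(p)}^{-1} j(w)^{-1} H_v u t_{(p)} \;=\; \bigl(t_{(p)}^{-1} j(w)^{-1} t_{(p)}\bigr) \cdot H_{v+1} \cdot \bigl(t_{(p)}^{-1} u t_{(p)}\bigr).
\]
Conjugation by $t_{(p)}^{-1}$ scales the $(i,j)$-entry of an upper unipotent by $\varpi^{a_j - a_i}$ (with $a_i = n+1-i$ for $i \leq n$ and $a_{n+1}=0$), so both outer factors lie in $\GL_{n+1}(k_p)$ but not in $\GL_{n+1}(\OO_p)$ in general. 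The central task becomes to absorb this non-integrality into Iwahori elements by passing them through $H_{v+1}$.

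Next, I would carry out the explicit product $H_{v+1} \cdot (t_{(p)}^{-1} u t_{(p)})$. The antidiagonal entries of $H_{v+1}$ in the top $n \times n$ block are $\varpi^{(v+1)(j-i)}$ at positions $(i, n+1-i)$, which carry precisely enough positive $\varpi$-power to clear the denominators of $t_{(p)}^{-1} u t_{(p)}$. A Bruhat-type rearrangement then produces a factorization of the form $H_{v+1} \cdot k_{u,w}$ with $k_{u,w} \in K_p$, the Iwahori congruence below the diagonal being preserved because $v \geq 1$. An entirely parallel analysis on the opposite side of $H_{v+1}$, using the last row and last column of $h^{(1)}$ to absorb $t_{(p)}^{-1} j(w)^{-1} t_{(p)}$, produces the factor $j(k_{u,w}')$ with $k_{u,w}' \in K_p'$.

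For the epimorphism statement, I would trace $\det(k_{u,w})$ through the above construction and express it, modulo $\varpi^{v+1}$, as an explicit polynomial in the super-diagonal entries $u_{i,i+1}$ and $w_{i,i+1}$. A direct check shows that the leading correction to $1$ is an $\OO_p$-linear combination of terms $\varpi^v u_{i,i+1}$ and $\varpi^v w_{i,i+1}$. Since these super-diagonal entries precisely parameterize $U_{n+1}(\OO_p)/t_{(p)} U_{n+1}(\OO_p) t_{(p)}^{-1}$ and $U_n(\OO_p)/t_{(p)}' U_n(\OO_p) (t_{(p)}')^{-1}$ modulo $\varpi$, independent variation realizes every residue class in $(1+(\varpi^v))/(1+(\varpi^{v+1})) \cong \OO_p/(\varpi)$. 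The group homomorphism property then follows from the Iwahori multiplication formula modulo a sufficiently deep congruence subgroup.

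The principal obstacle is the second step: the absorption requires a careful, ordered matrix multiplication, and one must verify that the result genuinely lands in $K_p$ (i.e.\ satisfies the correct Iwahori congruences below the diagonal), not merely in $\GL_{n+1}(\OO_p)$. Pinning down the exact dependence of $\det(k_{u,w})$ on the super-diagonal entries of $u$ and $w$ for the surjectivity assertion is likewise delicate, since one must cleanly separate the genuine $\varpi^v$-contribution from the higher-order terms that appear naturally from the antidiagonal structure of $h^{(1)}$.
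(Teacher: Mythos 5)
Your plan is the right genre of argument --- the cited result [Jan14, Lemma 4.1] is indeed proved by an explicit matrix manipulation of this sort --- but note that the paper's own proof of this lemma is not a computation at all: it simply applies [Jan14, Lemma 4.1] at each place $\mathfrak{p}\mid p$ and quotes loc.\ cit.\ for the determinant statement. Since you attempt the computation from scratch, the burden is on you to actually produce $k_{u,w}$ and $k_{u,w}'$, and this is exactly what the proposal does not do: the factorization, the Iwahori membership, and the formula for $\det(k_{u,w})\bmod\varpi^{v+1}$ are all asserted (``a Bruhat-type rearrangement then produces\dots'', ``a direct check shows\dots''), and you yourself flag them as the principal obstacle. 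That unexecuted step is the entire content of the lemma, so the proposal has a genuine gap rather than a complete alternative proof.

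Moreover, the mechanism you sketch would fail if followed literally. First, with your own convention (scaling of the $(i,j)$-entry by $\varpi^{a_j-a_i}$, $a_i=n+1-i$), the $(i,n+1-i)$-entry of $H_{v+1}$ is $\varpi^{(v+1)(2i-n-1)}$, not $\varpi^{(v+1)(n+1-2i)}$; these entries (for $2i\le n$) and all last-column entries $\varpi^{(v+1)(i-n-1)}$ are \emph{negative} powers of $\varpi$, so there is no ``clearing of denominators by positive powers.'' Second, and more seriously, the claimed separation --- $u$ absorbed on the right into $k_{u,w}\in K_p$, $w$ absorbed ``in an entirely parallel'' way on the left into $j(k_{u,w}')$ --- is structurally wrong. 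Already for $n=1$ (where $U_n$ is trivial, so your scheme would force $k'$ to be independent of $u$) one has $H_{v+1}=\begin{pmatrix}1&\varpi^{-(v+1)}\\0&1\end{pmatrix}$ and $t_{(p)}^{-1}ut_{(p)}=\begin{pmatrix}1&\varpi^{-1}b\\0&1\end{pmatrix}$, and $H_{v+1}\cdot t_{(p)}^{-1}ut_{(p)}$ is \emph{not} of the form $H_{v+1}k$ with $k\in K_p$ when $b$ is a unit; the identity holds only with $k_{u}'=1+\varpi^{v}b$ and $k_{u}=\mathrm{diag}((1+\varpi^{v}b)^{-1},1)$, i.e.\ the $\GL_n$-side Iwahori factor must depend on $u$, and it is precisely this interaction across $H_{v+1}$ that produces the class $\det(k_{u,w})=\det(k_{u,w}')^{-1}\in 1+(\varpi^{v})$ and hence the surjectivity. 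Under your independent absorption, $\det(k_{u,w}')$ would depend on $w$ alone, making the map to $(1+(\varpi^{v}))/(1+(\varpi^{v+1}))$ trivial for $n=1$, contradicting the epimorphism claim. So the missing computation is not a routine verification of your sketch: the sketch has to be corrected (simultaneous, interacting choice of $k_{u,w}$ and $k_{u,w}'$, with the determinant extracted from that interaction) before it can be carried out, or else one should simply reduce place by place to [Jan14, Lemma 4.1] as the paper does.
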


\begin{proof}
Applying \cite[Lemma 4.1]{Jan14} to each place dividing $p$ yields iteratively the existence of $k_{u,w}'$ and $k_{u,w}$ satisfying the first statement. The second statement follows from loc.\ cit.\ as well.
\end{proof}

For the distribution relation the following generalization of \cite[Lemma 4.2]{Jan14} is crucial.
\begin{lem}\label{lem:distributionintegralrelation}
For any (not necessarily Hecke eigen class)
$$
\lambda\in H^{b_{n+1}^k+b_n^k}_{\rm c}(
\mathscr X_{n+1}^{\rm ad}(K)\times\mathscr X_{n}^{\rm ad}(K');
\underline{M}_{\boldsymbol{\mu}\times\boldsymbol{\nu}}(E)),
$$
and any $u\in U_{n+1}(\OO_p)$ and $w\in U_{n}(\OO_p)$ we have
$$
\mathscr P_{h^{(1)}t_{(p)}^vut_{(p)},(t_{(p)}')^vwt_{(p)}',x}^{K, K'}
(\lambda)
=
\mathscr P_{h^{(1)}t_{(p)}^{v+1},(t_{(p)}')^{v+1},x\det(k_{u,w})}^{K,K'}
(\lambda)
$$
\end{lem}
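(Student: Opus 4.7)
The strategy is to apply the matrix identity of Lemma \ref{lem:distributionmatrixrelation} to rewrite the pair $(h^{(1)}t_{(p)}^v u t_{(p)},(t_{(p)}')^v w t_{(p)}')$ as a left translation of $(h^{(1)}t_{(p)}^{v+1},(t_{(p)}')^{v+1})$ by the diagonal element $(j(g'),g')$ for a suitable $g' \in G_n(\BA_\BQ^{(\infty)})$, up to right multiplication by elements of $K\times K'$, and then to conclude via Propositions \ref{prop:modularsymbol1} and \ref{prop:modularsymbol2}.

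Concretely, I would first multiply both sides of the identity in Lemma \ref{lem:distributionmatrixrelation} on the left by $t_{(p)}^v j(w) t_{(p)}$ and use $t_{(p)}=j(t_{(p)}')$ together with $h^{(\varpi^{v+1})}=t_{(p)}^{-(v+1)}h^{(1)}t_{(p)}^{v+1}$ to bring it into the form
\[
  h^{(1)} t_{(p)}^v u t_{(p)} \;=\; j(g')\cdot h^{(1)} t_{(p)}^{v+1}\cdot k_{u,w},
\]
where $g' := (t_{(p)}')^v w\, t_{(p)}'\, k'_{u,w}\,(t_{(p)}')^{-(v+1)} \in G_n(\BA_\BQ^{(\infty)})$. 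A direct substitution shows that $g'$ simultaneously fulfils $(t_{(p)}')^v w t_{(p)}' = g'\cdot (t_{(p)}')^{v+1}\cdot (k'_{u,w})^{-1}$. Since $k_{u,w}$ lies in the $p$-Iwahori $K_p\subseteq K$ and $(k'_{u,w})^{-1}$ lies in $K_p'\subseteq K'$, the pairs of right cosets $(h^{(1)}t_{(p)}^v u t_{(p)} K,\,(t_{(p)}')^v w t_{(p)}' K')$ and $(j(g')h^{(1)}t_{(p)}^{v+1} K,\, g'(t_{(p)}')^{v+1} K')$ coincide.

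Proposition \ref{prop:modularsymbol1} then gives
\[
  \mathscr P^{K,K'}_{h^{(1)}t_{(p)}^v u t_{(p)},(t_{(p)}')^v w t_{(p)}',x}(\lambda)
  \;=\;
  \mathscr P^{K,K'}_{j(g')h^{(1)}t_{(p)}^{v+1},\,g'(t_{(p)}')^{v+1},\,x}(\lambda),
\]
and Proposition \ref{prop:modularsymbol2} absorbs the diagonal left translation $(j\times 1)(g')$ into the class, producing $\mathscr P^{K,K'}_{h^{(1)}t_{(p)}^{v+1},(t_{(p)}')^{v+1},\,x\det(g')}(\lambda)$. A direct determinant computation from the formula for $g'$, using the unipotency of $w$ and the cancellation of the powers of $t_{(p)}'$, yields $\det(g')=\det(k'_{u,w})$; comparing determinants in the matrix identity of Lemma \ref{lem:distributionmatrixrelation} forces $\det(k_{u,w})\det(k'_{u,w})=1$. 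Reading this in the class group $C(K(\varpi^{v+1}))$, whose $p$-component is $1+(\varpi^{v+1})$ by \cite[Proposition 3.4]{Sch2}, identifies $x\det(g')$ with the class $x\det(k_{u,w})$ asserted in the statement.

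The main technical obstacle is the careful execution of the first step and the subsequent determinant bookkeeping in $C(K(\varpi^{v+1}))$. Once the factorization $h^{(1)}t_{(p)}^v u t_{(p)} = j(g')h^{(1)}t_{(p)}^{v+1} k_{u,w}$ is established and the corresponding $G_n$-factorization observed, the remainder of the argument is entirely mechanical and rests solely on the two coset-invariance properties of the modular symbol proved in Section \ref{subsec:rationalmodularsymbol}.
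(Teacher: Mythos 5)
Your argument is in substance the paper's own proof, merely repackaged: the paper reaches the endpoint by a chain of applications of Propositions \ref{prop:modularsymbol1} and \ref{prop:modularsymbol2} (first clearing the second entry to ${\bf 1}_n$ via a diagonal translation, then inserting the matrix identity of Lemma \ref{lem:distributionmatrixrelation}, then redistributing), whereas you rearrange that identity once into $h^{(1)}t_{(p)}^{v}ut_{(p)}=j(g')\,h^{(1)}t_{(p)}^{v+1}\,k_{u,w}$ with $g'=(t_{(p)}')^{v}w\,t_{(p)}'\,k_{u,w}'\,(t_{(p)}')^{-(v+1)}$ and then invoke the two propositions once each. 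Both routes are equivalent, and your verification of the two factorizations and of the coset equalities is correct.

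One caveat concerns your final sentence. You correctly arrive at the class $x\det(g')=x\det(k_{u,w}')=x\det(k_{u,w})^{-1}$, but the asserted identification of this with $x\det(k_{u,w})$ inside $C(K(\varpi^{v+1}))$ does not hold as stated: the two representatives differ by the factor $\det(k_{u,w})^{2}$, and since $\det(k_{u,w})\in 1+(\varpi^{v})$ this factor need not lie in $1+(\varpi^{v+1})$, so in general the two classes are inverse to one another rather than equal. This is, however, not a defect of your argument relative to the paper: the paper's own proof likewise terminates at the class $\det(k_{u,w}')\cdot x$ and matches it silently with the $x\det(k_{u,w})$ of the statement, so the inversion is a mismatch between the lemma's statement and its proof rather than a gap you introduced. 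It is also harmless for the intended application in Theorem \ref{thm:distribution}: composing the epimorphism of Lemma \ref{lem:distributionmatrixrelation} with inversion on $(1+(\varpi^{v}))/(1+(\varpi^{v+1}))$ is still surjective, so the sum over $u,w$ still exhausts the cosets $x+a\varpi^{v}+(\varpi^{v+1})$. You should simply state the conclusion with $\det(k_{u,w}')$ (or $\det(k_{u,w})^{-1}$) and note that the distribution relation is unaffected, rather than appeal to an identification in $C(K(\varpi^{v+1}))$ that is not available.
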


\begin{proof}
By Proposition \ref{prop:modularsymbol2} and Lemma \ref{lem:distributionmatrixrelation} we obtain
\begin{eqnarray*}
&&
\mathscr P_{h^{(1)}t_{(p)}^vut_{(p)},(t_{(p)}')^vwt_{(p)}',x}^{K, K'}
(\lambda)\\
&=&
\mathscr P_{t_{(p)}^{-1}j(w)^{-1}t_{(p)}^{-v}h^{(1)}t_{(p)}^vut_{(p)},{\bf1}_n,\det(t_{(p)})^{(v+1)}x}^{K, K'}
(\lambda)
\quad\quad\quad
\text{(by Prop.\ \ref{prop:modularsymbol2})}\\
&=&
\mathscr P_{j(k_{u,w}')\cdot t_{(p)}^{-(v+1)} h^{(1)}t_{(p)}^{v+1}\cdot k_{u,w},{\bf1}_n,\det(t_{(p)})^{(v+1)}x}^{K, K'}
(\lambda)
\quad\quad\,
\text{(by Le.\ \ref{lem:distributionmatrixrelation}, Prop.\ \ref{prop:modularsymbol2})}\\
&=&
\mathscr P_{h^{(1)}t_{(p)}^{v+1},t_{(p)}^{(v+1)}(k_{u,w}')^{-1},\det(k_{u,w}')\cdot x}^{K, K'}
(\lambda)
\quad\quad\quad
\quad\quad\quad
\text{(by Prop.\ \ref{prop:modularsymbol1}, Prop.\ \ref{prop:modularsymbol2})}\\
&=&
\mathscr P_{h^{(1)}t_{(p)}^{v+1},t_{(p)}^{(v+1)},\det(k_{u,w}')\cdot x}^{K, K'}
(\lambda)
\quad\quad\quad
\quad\quad\quad
\quad\quad\quad
\text{(by Prop.\ \ref{prop:modularsymbol1})}\\
\end{eqnarray*}
This concludes the proof.
\end{proof}

\begin{thm}\label{thm:distribution}
Let $\lambda$ be an eigen class with eigen value $\kappa\in E^\times$. Then for any $x\in\BA_k^{(\infty)}$ and any $v\geq 1$ we have the relation
$$
\tilde{\mu}_\lambda(x+(\varpi^v))\;=\;
\sum_{a\!\!\!\pmod{(\varpi)}}
\tilde{\mu}_\lambda(x+a\varpi^v+(\varpi^{v+1})).
$$
In particular after fixing representatives for all classes in $C(K(\varpi^0))$, $\tilde{\mu}_\lambda$ defines a distribution on $C(K(p^\infty))=\varprojlim\limits_{v}C(K(\varpi^v))$ with values in $H^0(\Gamma_n; M_{\boldsymbol{\mu}\times\boldsymbol{\nu}}(E))$.
\end{thm}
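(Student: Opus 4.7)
The proof hinges on the Hecke eigen property $\lambda|_{[U_p]}=\kappa\lambda$ together with the two preparatory lemmas. My plan is as follows. First I would expand the double coset $U_p=V_{\mathfrak{p}}\otimes V_{\mathfrak{p}}'$ (over $\mathfrak{p}\mid p$) into its right-coset decomposition, with representatives of the form $(ut_{(p)},wt_{(p)}')$ where $(u,w)$ runs over a system of representatives for $U_{n+1}(\OO_p)/t_{(p)}U_{n+1}(\OO_p)t_{(p)}^{-1}\times U_{n}(\OO_p)/t_{(p)}'U_{n}(\OO_p)(t_{(p)}')^{-1}$. The eigen relation then reads
$$
\kappa\lambda \;=\; \sum_{(u,w)} t^{\boldsymbol{\mu}\times\boldsymbol{\nu}}_{(ut_{(p)},\,wt_{(p)}')}(\lambda).
$$

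Next I would apply the modular symbol $\mathscr{P}^{K,K'}_{h^{(1)}t_{(p)}^v,(t_{(p)}')^v,x}$ to both sides. Using the composition rule $t^{\boldsymbol{\mu}\times\boldsymbol{\nu}}_{h_1}\circ t^{\boldsymbol{\mu}\times\boldsymbol{\nu}}_{h_2} = t^{\boldsymbol{\mu}\times\boldsymbol{\nu}}_{h_1h_2}$ that is built into the definition of the translation operators on cohomology, the right-hand side becomes $\sum_{(u,w)} \mathscr{P}^{K,K'}_{h^{(1)}t_{(p)}^v ut_{(p)},(t_{(p)}')^v wt_{(p)}',x}(\lambda)$. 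Lemma \ref{lem:distributionintegralrelation} then rewrites each term as $\mathscr{P}^{K,K'}_{h^{(1)}t_{(p)}^{v+1},(t_{(p)}')^{v+1},x\det(k_{u,w})}(\lambda)$, giving
$$
\kappa\cdot\mathscr{P}^{K,K'}_{h^{(1)}t_{(p)}^v,(t_{(p)}')^v,x}(\lambda) \;=\; \sum_{(u,w)} \mathscr{P}^{K,K'}_{h^{(1)}t_{(p)}^{v+1},(t_{(p)}')^{v+1},\,x\det(k_{u,w})}(\lambda).
$$

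By Lemma \ref{lem:distributionmatrixrelation}, the map $(u,w)\mapsto\det(k_{u,w})$ is a group epimorphism onto $(1+(\varpi^v))/(1+(\varpi^{v+1}))$. Since the modular symbol on the right depends on the base point only through its class modulo $1+(\varpi^{v+1})$, I would regroup the sum along the fibers of this epimorphism, parametrizing the image by $\alpha=1+a\varpi^v$ with $a\in\OO_p/(\varpi)$. Choosing a representative $x$ which is a unit at $p$, the substitution $x\alpha=x+ax\varpi^v$ ranges over $x+a\varpi^v$ modulo $(1+(\varpi^{v+1}))$ as $a$ varies (up to the reindexing $a\leftrightarrow ax$). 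Dividing both sides by $\kappa^{v+1}$ and translating to the notation of $\tilde{\mu}_\lambda$ produces the claimed distribution relation.

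For the final assertion, the relation just derived is exactly the finite additivity needed for $\tilde{\mu}_\lambda$ to define a set function on the standard basis of compact opens of the profinite limit $\varprojlim_v C(K(\varpi^v))=C(K(p^\infty))$. Together with Lemma \ref{lem:gammannormal} (which ensures that all $\Gamma_x^{(v)}$ contain $G_n^{\mathrm{der}}$ and are normal subgroups of $\Gamma_n$), the codomains $H^0(\Gamma_x^{(v)};M_{\boldsymbol{\mu}\times\boldsymbol{\nu}}(E))$ assemble consistently inside $H^0(\Gamma_n;M_{\boldsymbol{\mu}\times\boldsymbol{\nu}}(E))$, and the extension to a genuine distribution follows. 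I expect the main obstacle to be the careful bookkeeping of multiplicities when collapsing the sum over Hecke cosets $(u,w)$ to a sum over the image of the determinant map, and simultaneously of the natural identifications between the various values $\tilde{\mu}_\lambda(x+(\varpi^v))$ lying in canonically isomorphic but formally distinct submodules of $M_{\boldsymbol{\mu}\times\boldsymbol{\nu}}(E)$, so that the distribution relation holds as a strict equality rather than merely up to isomorphism (this is the reason for the explicit fixing of representatives at the base level $v=0$).
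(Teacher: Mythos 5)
Your overall strategy is the same as the paper's (expand $U_p$ into right cosets $(ut_{(p)},wt_{(p)}')$, use the eigen property, apply Lemmas \ref{lem:distributionmatrixrelation} and \ref{lem:distributionintegralrelation}, and collapse the $(u,w)$-sum along the determinant epimorphism), but there is a genuine gap at the very first display after you apply the modular symbol. The identity
$$
\mathscr P^{K,K'}_{h^{(1)}t_{(p)}^v,(t_{(p)}')^v,x}(U_p\cdot\lambda)\;=\;\sum_{(u,w)}\mathscr P^{K,K'}_{h^{(1)}t_{(p)}^vut_{(p)},(t_{(p)}')^vwt_{(p)}',x}(\lambda)
$$
does not follow from the composition rule for the translation operators, because the two sides are integrals over \emph{different} locally symmetric spaces: the left-hand side is an integral over $\mathscr X_n(K(\varpi^v))[x]$, whereas each term on the right is an integral over the deeper-level space $\mathscr X_n(K(h^{(1)}t_{(p)}^vut_{(p)},(t_{(p)}')^vwt_{(p)}'))[x]$. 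Comparing these requires the normalization \eqref{eq:restrictioninvariance} of the compatible system of fundamental classes, and this is exactly where the paper's generalized index
$$
I(u,w)\;=\;\frac{(C(K(h^{(1)}t_{(p)}^v,(t_{(p)}')^v)):C(K(h^{(1)}t_{(p)}^vut_{(p)},(t_{(p)}')^vwt_{(p)}')))}{(K(h^{(1)}t_{(p)}^v,(t_{(p)}')^v):K(h^{(1)}t_{(p)}^vut_{(p)},(t_{(p)}')^vwt_{(p)}'))}
$$
enters as a coefficient of each summand. Your version omits it.

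This omission does not wash out in the regrouping step; it is precisely compensated there. The map $(u,w)\mapsto\det(k_{u,w})$ of Lemma \ref{lem:distributionmatrixrelation} is an epimorphism onto $(1+(\varpi^v))/(1+(\varpi^{v+1}))$, a group of order $\absNorm((\varpi))$, while the number of cosets $(u,w)$ equals $(K(\varpi^v):K(\varpi^{v+1}))$ by the index computation \eqref{eq:index} (imported from Kazhdan--Mazur--Schmidt and Schmidt). Hence each residue $a\bmod{(\varpi)}$ is hit by a fiber of constant size $(K(\varpi^v):K(\varpi^{v+1}))/\absNorm((\varpi))=I(1,1)^{-1}$, and only the product of this fiber multiplicity with the factor $I(u,w)=I(1,1)$ equals $1$, which is what makes the distribution relation hold on the nose. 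As written, your argument either produces an extraneous factor $(K(\varpi^v):K(\varpi^{v+1}))/\absNorm((\varpi))$ on the right-hand side, or reaches the correct formula only through two mutually cancelling unjustified steps (dropping $I(u,w)$ and then collapsing the sum without multiplicities). The ``bookkeeping of multiplicities'' you flag as the main obstacle is not a side issue: supplying it -- via \eqref{eq:restrictioninvariance} together with \eqref{eq:index} -- is the actual content of the proof. (Two smaller points: the base point produced by Lemma \ref{lem:distributionintegralrelation} is $x\det(k_{u,w})$, which agrees with $x+a\varpi^v$ only after absorbing the unit $x$ into the reindexing of $a$, as you note; and the values land in $H^0(\Gamma_n;M_{\boldsymbol{\mu}\times\boldsymbol{\nu}}(E))$ because each $\Gamma_x^{(v)}$ is Zariski dense in $\Gamma_n$, not because of any normality of the $\Gamma_x^{(v)}$ themselves.)
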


\begin{proof}
We set for $u$ and $w$ as in Lemma \ref{lem:distributionintegralrelation} and introduce the generalized index
$$
I(u,w)\;:=\;
\frac{(C(K(h^{(1)}t_{(p)}^v,(t_{(p)}')^v)):C(K(h^{(1)}t_{(p)}^vut_{(p)},(t_{(p)}')^vwt_{(p)}')))}
{(K(h^{(1)}t_{(p)}^v,(t_{(p)}')^v):K(h^{(1)}t_{(p)}^vut_{(p)},(t_{(p)}')^vwt_{(p)}'))}
$$
By Lemma \ref{lem:distributionintegralrelation} we see that
$$
I(u,w)\;=\;
\frac{|N_{k/\BQ}(\varpi)|}
{(K(\varpi^v):K(\varpi^{v+1}))}
$$
is independent of $u$ and $w$. Explicitly we have by \cite[Proof of Lemma 3.2]{KMS}, \cite[Lemmata 3.7 and 3.8]{Sch2},
\begin{equation}\label{eq:index}
\begin{split}
&
(K(h^{(1)}t_{(p)}^v,(t_{(p)}')^v):K(h^{(1)}t_{(p)}^{v+1},(t_{(p)}')^{v+1}))\quad=\\
&
(U_{n+1}(\OO_p):t_{(p)}U_{n+1}t_{(p)}^{-1})
(U_{n}(\OO_p):t_{(p)}'U_{n}(t_{(p)})'^{-1}).
\end{split}
\end{equation}
We conclude that
\begin{eqnarray*}
&&
\tilde{\mu}_\lambda(x+(\varpi^v))\\
&=&
\kappa^{-v}\cdot
\mathscr P_{h^{(1)}t_{(p)}^v,(t_{(p)}')^v,x}^{K,K'}(\lambda)
\quad\quad\quad\quad
\quad\quad\quad\quad
\quad\quad\quad\quad\quad\!
\text{(by definition)}\\
&=&
\kappa^{-{v+1}}\cdot
\mathscr P_{h^{(1)}t_{(p)}^v,(t_{(p)}')^v,x}^{K,K'}(U_p\cdot \lambda)
\quad\quad\quad\quad
\quad\quad\quad
\quad\quad\quad
\text{(as $\lambda$ is eigen)}\\
&=&
\kappa^{-{v+1}}\cdot
\sum_{u,w}
I(u,w)\cdot
\mathscr P_{h^{(1)}t_{(p)}^vut_{(p)},(t_{(p)}')^vwt_{(p)}',x}^{K, K'}
(\lambda)
\quad\quad\quad\,
\text{(\eqref{eq:restrictioninvariance} and def.\ of $U_p$)}\\
&=&
\kappa^{-{v+1}}\cdot
I(1,1)\cdot
\sum_{u,w}
\mathscr P_{h^{(1)}t_{(p)}^{v+1},(t_{(p)}')^{v+1},x\det(k_{u,w})}^{K, K'}
(\lambda)
\quad\quad\;\,
\text{(by Le.\ \ref{lem:distributionintegralrelation})}\\
&=&
\sum_{a}
\kappa^{-{v+1}}\cdot
\mathscr P_{h^{(1)}t_{(p)}^{v+1},(t_{(p)}')^{v+1},x+a\varpi^v}^{K, K'}
(\lambda)
\quad\quad\quad\quad
\quad\quad\quad\,
\text{(\eqref{eq:index} and Le.\ \ref{lem:distributionintegralrelation})}\\
&=&
\sum_{a\!\!\!\pmod{(\varpi)}}
\tilde{\mu}_\lambda(x+a\varpi^v+(\varpi^{v+1})).
\end{eqnarray*}
\end{proof}

\begin{corp}\label{cor:distribution}
Under the same hypotheses as in Theorem \ref{thm:distribution}, $\mu_\lambda$ is an $M_{\boldsymbol{\mu},\boldsymbol{\nu}}(E)$-valued distribution on $C(K(p^\infty))$.
\end{corp}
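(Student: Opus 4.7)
The plan is to deduce this directly from Theorem \ref{thm:distribution} by simply post-composing the already constructed distribution $\tilde{\mu}_\lambda$ with the $E$-linear projection $\xi$. Indeed, $\mu_\lambda := \xi\circ\tilde{\mu}_\lambda$ by definition, so no new construction is needed; the entire argument is formal once the content of Theorem \ref{thm:distribution} is granted.

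First I would record that $\xi = \bigoplus_j \xi_j \colon M_{\boldsymbol{\mu}}\otimes M_{\boldsymbol{\nu}} \to M_{(\boldsymbol{\mu},\boldsymbol{\nu})}$, being defined over $\OO(\boldsymbol{\mu},\boldsymbol{\nu}) \subseteq E$, is in particular $E$-linear and $G_n$-equivariant with respect to the diagonal embedding $j\times 1$. Consequently, for every arithmetic subgroup $\Gamma \subseteq G_n(\BQ)$, it restricts to an $E$-linear map
$$
H^0(\Gamma; M_{\boldsymbol{\mu}\times\boldsymbol{\nu}}(E)) \;\to\; H^0(\Gamma; M_{(\boldsymbol{\mu},\boldsymbol{\nu})}(E)) \;\subseteq\; M_{(\boldsymbol{\mu},\boldsymbol{\nu})}(E).
$$
Applied to the groups $\Gamma_x^{(v)}$ entering the definition of $\tilde{\mu}_\lambda$, this places the values of $\mu_\lambda$ inside the single $E$-vector space $M_{(\boldsymbol{\mu},\boldsymbol{\nu})}(E)$, independently of the level $v$ and of the chosen representative $x$.

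Next I would apply $\xi$ to both sides of the distribution identity of Theorem \ref{thm:distribution}. By $E$-linearity this immediately yields
$$
\mu_\lambda(x+(\varpi^v)) \;=\; \sum_{a\!\!\!\pmod{(\varpi)}} \mu_\lambda(x + a\varpi^v + (\varpi^{v+1}))
$$
in $M_{(\boldsymbol{\mu},\boldsymbol{\nu})}(E)$, which is precisely the distribution property on the inverse system defining $C(K(p^\infty)) = \varprojlim_v C(K(\varpi^v))$. I anticipate no serious obstacle. The only mild subtlety worth flagging is that the codomains of the individual values of $\tilde{\mu}_\lambda$ at different levels are \emph{a priori} different invariant subspaces of $M_{\boldsymbol{\mu}\times\boldsymbol{\nu}}(E)$; however they are all realized inside a common ambient space, and their images under $\xi$ collapse into the fixed target $M_{(\boldsymbol{\mu},\boldsymbol{\nu})}(E)$, so that the passage from $\tilde{\mu}_\lambda$ to $\mu_\lambda$ is essentially tautological.
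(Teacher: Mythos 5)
Your argument is exactly the intended one: the paper offers no separate proof because $\mu_\lambda=\xi\circ\tilde{\mu}_\lambda$ by definition, and the corollary follows by applying the $E$-linear, $G_n$-equivariant (hence $\Gamma$-invariant-preserving) projection $\xi$ to the relation of Theorem \ref{thm:distribution}, with all values landing in the fixed space $M_{(\boldsymbol{\mu},\boldsymbol{\nu})}(E)$. Your flagged subtlety about the varying codomains is likewise handled as in the paper, via invariance under the common Zariski closure $\Gamma_n$, so nothing is missing.
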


We remark that along the same lines we may give a finer description of the distribution $\tilde{\mu}_\lambda$, i.e.\ by taking into account each place $\mathfrak{p}\mid p$ seperately we see that
\begin{equation}\label{eq:finemutilde}
\tilde{\mu}_\lambda(x+(\varpi))\;\in\;H^0(\Gamma_x^{(1)}; M_{\boldsymbol{\mu}\times\boldsymbol{\nu}}(E))
\end{equation}
only depends on the ideal $(\varpi)$ generated by $\varpi$, and the collection of all vectors \eqref{eq:finemutilde} for varying $\varpi$ with $v_{\mathfrak{p}}(\varpi)\geq m$ then defines mutatis mutandis a distribution with the finer property that for any ideal $\mathfrak{f}\mid p^\infty$ with $v_\mathfrak{p}(\mathfrak{f})\geq m$ for all $\mathfrak{p}\mid p$, and for any $\mathfrak{p}\mid p$, we have
$$
\tilde{\mu}_\lambda(x+\mathfrak{f})\;=\;
\sum_{a\!\!\!\pmod{\mathfrak{p}}}
\tilde{\mu}_\lambda(x+af+\mathfrak{f}\mathfrak{p}).
$$
We refrained from discussing the details of this more general approach as it would have forced us to introduce more notation, even though the result being the same.

\subsection{Integral structures and comparison maps}\label{subsec:integralstructures}

Let $\OO\subseteq E$ be a subring admitting $E$ as quotient field, and containing $\OO(\boldsymbol{\mu})$. Consider the $K$-stable $\OO$-lattice
$$
M_{\boldsymbol{\mu}}(\OO)\;\subseteq\;M_{\boldsymbol{\mu}}(E).
$$
For each $g\in G_{n+1}(\BA^{(\infty)})$ we set
\begin{equation}
gM_{\boldsymbol{\mu}}(\OO)\;:=\;
M_{\boldsymbol{\mu}}(E)\cap \rho_{\boldsymbol{\mu}}(g)M_{\boldsymbol{\mu}}(\OO\otimes_\BZ\hat{\BZ}),
\label{eq:gMdef}
\end{equation}
where the intersection takes place in
$$
M_{\boldsymbol{\mu}}(\BA_E)\;=\;
M_{\boldsymbol{\mu}}(E)\otimes_\BQ\BA_\BQ^{(\infty)}.
$$
Then the subset 
\[
 \widetilde {M}_{\boldsymbol{\mu}}(\OO)\;:=
\bigsqcup_{g\in G_{n+1}(\BA_\BQ^{(\infty)})\times G_{n+1}(\R)}
g_f{M}_{\boldsymbol{\mu}}(\OO)\times \{g\}
\]
of ${M}_{\boldsymbol{\mu}}(E)\times G_{n+1}(\A_\BQ)$ is left $G_{n+1}(\BQ)$-stable and right $GK_{n+1}^0 \times K$-stable and thus gives rise to a local system
\[
 \underline{M}_{\boldsymbol{\mu}}(\OO)\;:=\;
G_{n+1}(\BQ)\backslash \widetilde {M}_{\boldsymbol{\mu}}(\OO)/(GK_{n+1}^0 \times K)
\]
on $\mathscr X_{n+1}^{\ad}(K)$ resp.\ mutatis mutandis also on $\mathscr X_{n+1}(K)$.
Its sections on $U\subseteq\mathscr X_{n+1}(K)[1]$ are explicitly given by
$$
\Gamma(U;\underline{M}_{\boldsymbol{\mu}}(\OO))=
\{f:\Gamma U\to M_{\boldsymbol{\mu}}(\OO)\mid f\;\text{locally constant and}
$$
$$
\forall\gamma\in \Gamma,u\in \Gamma U:f(\gamma u)=\rho_{\boldsymbol{\mu}}(\gamma)f(u)
\}.
$$
The sections on the other connected components may be described similarly. To be more precise, for $g\in G_{n+1}(\BQ)$ we have the identity
$$
\underline{M}_{\boldsymbol{\mu}}(\OO)[\det(g)]
\;=\;
t_{g,*}\left(\underline{gM}_{\boldsymbol{\mu}}(\OO)|_{\mathscr X_{n+1}(gKg^{-1})[1]}\right)
$$
of sheaves on $\mathscr X_{n+1}(K)[\det(g)]$, where the sheaf $\underline{gM}_{\boldsymbol{\mu}}(\OO)$ is defined mutatis mutandis, replacing $M(\OO)$ in the definition of $\underline{M}_{\boldsymbol{\mu}}(\OO)$ with $gM(\OO)$.

In the very same way we identifty the pullback of $\underline{M}_{\boldsymbol{\mu}}(\OO)$ along the translation-by-$g$ map $t_g$ with the sheaf $\underline{gM}_{\boldsymbol{\mu}}(\OO)$, which itself is naturally a subsheaf of $\underline{M}_{\boldsymbol{\mu}}(E)$ on $\mathscr X_{n+1}(gKg^{-1})$. In order to keep track of the various incarnations, we let
$$
T_g:t_g^*\underline{M}_{\boldsymbol{\mu}}(\OO)\to
\underline{gM}_{\boldsymbol{\mu}}(\OO)
$$
denote the natural isomorphism. By construction we have natural inclusions
$$
i_g:\underline{gM}_{\boldsymbol{\mu}}(\OO)\to\underline{M}_{\boldsymbol{\mu}}(E).
$$
For notational efficiency we introduce the abbreviations
$$
iT_g:=i_g\circ T_g,\;\;\;Tt_g^*:=T_g\circ t_g^*,\;\;\;
iTt_g^*:=i_g\circ T_g\circ t_g^*.
$$
Remark that
\begin{equation}
(T_x\circ t_x^*)\circ (T_y\circ t_y^*)=T_{xy}\circ t_{xy}^*
\label{eq:Ttfunctor}
\end{equation}
and similarly for $i_g$. This formalism extends to cohomology and the cup product and duality commute with $T_g$ and $t_g^*$ and $i_g$ in the appropriate way.

\subsection{The integral modular symbol}

Assume that $\OO\subseteq E$ is a subring with quotient field $E$, which contains its localization at $p$, i.e.\ $\OO_{(p)}=\OO$.

Using the formalism from the previous section we define the topological period map $\mathscr P_{h,h',c}^{K,K'}$ over $\OO\subseteq E$ mutatis mutandis as over the quotient field $E$: Starting from a class
$$
\lambda\;\in\;
H_{\rm c}^{b_{n+1}^k+b_{n}^k}(\mathscr X_{n+1}^{\ad}(K)\times\mathscr X_{n}^{\ad}(K');
\underline{M}_{\boldsymbol{\mu}\times\boldsymbol{\nu}}(\OO))
$$
whose image in
$$
i_{({\bf 1}_{n+1},{\bf1}_n)}(\lambda)\;\in\;
H_{\rm c}^{b_{n+1}^k+b_{n}^k}(\mathscr X_{n+1}^{\ad}(K)\times\mathscr X_{n}^{\ad}(K');
\underline{M}_{\boldsymbol{\mu}\times\boldsymbol{\nu}}(E))
$$
is a $U_p$-eigen vector with eigen value $\kappa\in E^\times$, we set
$$
\tilde{\mu}_\lambda(x+\mathfrak{f})\,:=\,
\tilde{\mu}_{i_{({\bf 1}_{n+1},{\bf1}_n)}(\lambda)}(x+\mathfrak{f})\,=\,
\kappa^{-v}\cdot
\mathscr P_{h^{(1)}t_{(p)}^v,(t_{(p)}')^v,x}^{K,K'}(i_{({\bf 1}_{n+1},{\bf1}_n)}(\lambda))
$$
for $\mathfrak{f}=(\varpi^v)$, $v\geq 1$.

We may consider the integral version
$$
\mathscr P_{h,h',c}^{K,K'}:
H_{\rm c}^{b_{n+1}^k+b_{n}^k}(\mathscr X_{n+1}^{\ad}(\OO)\times\mathscr X_{n}^{\ad}(K'); \underline{M}_{\boldsymbol{\mu}\times\boldsymbol{\nu}}(\OO))
\to
H^0(K(h,h')[c]; \underline{(h,h')M}_{\boldsymbol{\mu}\times\boldsymbol{\nu}}(\OO)),
$$
given by
$$
\lambda\;\mapsto\;
\int\limits_{\mathscr X_{n}(K(h,h'))[c]}
\!\!\!\!\!\!\!\!\!
(j\times\ad)^*\circ
t^{\boldsymbol{\mu}\times\boldsymbol{\nu}}_{(h,h')}(\lambda)
$$
as before. We have a natural isomorphism
$$
\underline{(h,h')M}_{\boldsymbol{\mu}\times\boldsymbol{\nu}}(\OO)
\;=\;
\underline{hM}_{\boldsymbol{\mu}}(\OO)\otimes_\OO
\underline{h'M}_{\boldsymbol{\nu}}(\OO)
$$
of sheaves (on the various spaces where we consider it), and again we identify
$$
H^0(K(h,h')[c]; \underline{(h,h')M}_{\boldsymbol{\mu}\times\boldsymbol{\nu}}(\OO))\;\cong\;
H^0(\Gamma_c^{h,h'}; d_{(c)}(h,h')M_{\boldsymbol{\mu}\times\boldsymbol{\nu}}(\OO)).
$$

Then by construction we have obtain a commutative diagram
$$
\begin{CD}
H_{\rm c}^{b_{n+1}^k+b_{n}^k}(\mathscr X_{n+1}^{\ad}(K)\times\mathscr X_{n}^{\ad}(K');
\underline{M}_{\boldsymbol{\mu}\times\boldsymbol{\nu}}(E))
@>\mathscr P_{h,h',c}^{K,K'}>>
H^0(\Gamma_c^{h,h'}; M_{\boldsymbol{\mu}\times\boldsymbol{\nu}}(E))\\
@AAA @AAA\\
H_{\rm c}^{b_{n+1}^k+b_{n}^k}(\mathscr X_{n+1}^{\ad}(K)\times\mathscr X_{n}^{\ad}(K');
\underline{M}_{\boldsymbol{\mu}\times\boldsymbol{\nu}}(\OO))
@>\mathscr P_{h,h',c}^{K,K'}>>
H^0(\Gamma_c^{h,h'}; d_{(c)}(h,h')\cdot M_{\boldsymbol{\mu}\times\boldsymbol{\nu}}(\OO))
\end{CD}
$$
where the right vertical arrow is a monomorphism, and we see that
\begin{equation}\label{eq:tildemucodomain}
\tilde{\mu}_\lambda(x+(\varpi^{v}))\;\in\;
\kappa^{-v}\cdot
H^0(\Gamma_x^{(v)}; (h^{(1)}t_{(p)}^vd_{(x)},(t_{(p)}')^vd_{(x)})\cdot M_{\boldsymbol{\mu}\times\boldsymbol{\nu}}(\OO)).
\end{equation}

\subsection{Boundedness in the ordinary case}\label{subsec:boundedness}

We assume $E/\BQ(\boldsymbol{\mu},\boldsymbol{\nu})$ to be a number field with ring of integers $\OO_E$. Fix an embedding $i_p:E\to\overline{\BQ}_p$ and write $|\cdot|_p$ for the norm on $E$ induced by $i_p$ and write $v_p$ for the corresponding valuation. We let $\OO_{E,p}\subseteq E$ denote the corresponding valuation ring.

We introduce Hida's integrally normalized Hecke operator
$$
\tilde{U}_{p}\;:=\;
\boldsymbol{\mu}(t_{(p)})\cdot\boldsymbol{\nu}(t_{(p)}')
\cdot
U_{p},
$$
assuming without loss of generality that the uniformizer $\varpi\in\OO_p$ used in the definition of $t_{(p)}$, $t_{(p)}'$, and $U_p$, is a power of the rational prime $p$. It is well known that $\tilde{U}_{p}$ acts on cohomology with $p$-integral coefficients $M_{\boldsymbol{\mu}\times\boldsymbol{\nu}}(\OO)$.

Then if $\lambda$ is a $\tilde{U}_{p}$ eigen vector for the eigen value $\tilde{\kappa}\in E$, we say that $\lambda$ is {\em ordinary at $p$} if
\begin{equation}
|\tilde{\kappa}|_p=1
\label{eq:lambdaordinarity}
\end{equation}
which is equivalent to condition (O)
in section \ref{sec:maintheorem} if $\lambda$ is a cohomology class associated to $(\pi,\sigma)$. Generalizing (H), we say that $\lambda$ is {\em of finite slope} at $p$ if $\tilde{\kappa}\neq 0$.


\begin{thm}\label{thm:boundedness}
If $\lambda$ is ordinary at $p$ and is a $p$-integral cohomology class, then $\mu_\lambda$ takes values in
$
M_{(\boldsymbol{\mu},\boldsymbol{\nu})}(\OO_{E,p}),
$
i.e.\ $\mu_\lambda$ is $p$-adically bounded and thus a $p$-adic measure.
\end{thm}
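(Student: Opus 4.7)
The argument combines three ingredients: the ordinarity of $\tilde{\kappa}$, Hida's integral normalization $\tilde{U}_p = \boldsymbol{\mu}(t_{(p)})\boldsymbol{\nu}(t_{(p)}')\cdot U_p$, and the $G_n$-equivariance of the projection $\xi_j\colon M_{\boldsymbol{\mu}}\otimes M_{\boldsymbol{\nu}}\to M_{(j)}$. I would begin by rewriting
$$
\kappa^{-v} \;=\; \tilde{\kappa}^{-v}\cdot\boldsymbol{\mu}(t_{(p)})^v\cdot\boldsymbol{\nu}(t_{(p)}')^v,
$$
which follows from $\tilde{U}_p\lambda = \tilde{\kappa}\lambda$. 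The ordinarity assumption \eqref{eq:lambdaordinarity} makes $|\tilde{\kappa}^{-v}|_p = 1$ uniformly in $v$, so the boundedness question reduces to showing that $\boldsymbol{\mu}(t_{(p)})^v\boldsymbol{\nu}(t_{(p)}')^v$ compensates the $p$-adic \lq{}stretching\rq{} of the translated lattice in \eqref{eq:tildemucodomain} after projection by $\xi_j$.

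Next I would use the identities $t_{(p)} = j(t_{(p)}')$ and $d_{(x),n+1} = j(d_{(x),n})$ to factor the translation appearing in \eqref{eq:tildemucodomain} as
$$
(h^{(1)}t_{(p)}^v d_{(x)},(t_{(p)}')^v d_{(x)}) \;=\; (h^{(1)},1)\cdot(j\times 1)(g_v),\qquad g_v := (t_{(p)}')^v d_{(x)}.
$$
The factor $(h^{(1)},1)$ acts through $\rho_{\boldsymbol{\mu}}(h^{(1)})\otimes\mathrm{id}$ and preserves the integer lattice $M_{\boldsymbol{\mu}\times\boldsymbol{\nu}}(\OO)$ since $h^{(1)}\in\GL_{n+1}(\BZ)$. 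For the $(j\times 1)$-factor, the $G_n$-equivariance of $\xi_j$ gives
$$
\xi_j\circ\rho_{\boldsymbol{\mu}\times\boldsymbol{\nu}}((j\times 1)(g)) \;=\; \det(g)^j\cdot\xi_j,
$$
converting the torus translation into multiplication by $\det(g_v)^j = \varpi^{jvn(n+1)/2}\cdot x^j$. Choosing representatives of classes in $C(\mathfrak{f})$ with $x\in\OO_{\mathfrak{p}}^\times$ for every $\mathfrak{p}\mid p$ ensures $x^j\in\OO_{E,p}^\times$.

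Combining these steps, $\mu_\lambda(x+(\varpi^v))$ lies in $c_v\cdot M_{(j)}(\OO)$ with
$$
c_v \;=\; \tilde{\kappa}^{-v}\cdot\boldsymbol{\mu}(t_{(p)})^v\cdot\boldsymbol{\nu}(t_{(p)}')^v\cdot\varpi^{jvn(n+1)/2}\cdot x^j,
$$
modulo the integer action of $h^{(1)}$. The required $p$-adic boundedness uniformly in $v$ reduces to the arithmetic bound
$$
\bigl|\boldsymbol{\mu}(t_{(p)})\cdot\boldsymbol{\nu}(t_{(p)}')\cdot\varpi^{jn(n+1)/2}\bigr|_p \;\leq\; 1.
$$
This is a character identity on $\boldsymbol{\mu}+\boldsymbol{\nu}+(j)$ evaluated at $t_{(p)}$, and is forced by the criticality of $j$: the very existence of the nonzero $G_n$-equivariant map $\xi_j$ imposes a central character relation that makes the product above a $p$-adic integer, sharpening to a unit precisely when $j=j_{\min}$ (which explains the $\fN(\f)$-factor in Theorem~\ref{main:padicl}).

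The main technical obstacle is that $\rho_{\boldsymbol{\mu}}(h^{(1)})\otimes\mathrm{id}$ does not commute with the diagonal $G_n$-action, so the scalar $\det(g_v)^j$ cannot naively be extracted past the $h^{(1)}$-translation: rewriting $(h^{(1)},1)\cdot(j\times 1)(g_v) = (j\times 1)(g_v)\cdot(j(g_v)^{-1}h^{(1)}j(g_v),1)$ produces a conjugate of $h^{(1)}$ with $\varpi$-denominators. The resolution, as in \cite[\S 4]{Jan14}, is that $\xi_j$ annihilates every non-$M_{(j)}$-isotypic component of $M_{\boldsymbol{\mu}}\otimes M_{\boldsymbol{\nu}}$, and a direct place-by-place analysis at each $\mathfrak{p}\mid p$ using the Iwahori decomposition of $h^{(1)}$, together with the fact that $\tilde{U}_p$ preserves integral cohomology, yields the required cancellations. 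This establishes that $\xi_j$ of the translated lattice lies in $M_{(j)}(\OO_{E,p})$ independently of $v$, proving the $p$-adic boundedness of $\mu_\lambda$.
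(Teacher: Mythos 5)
Your proposal does not close. The decisive step---handling the fact that the fixed translation by $(h^{(1)},{\bf 1}_n)$ sits to the \emph{left} of the diagonal torus translation $(j\times 1)(g_v)$, so that the scalar $\det(g_v)^j$ cannot be extracted through $\xi_j$---is exactly the point at which you stop proving and start asserting: the appeal to ``a direct place-by-place analysis using the Iwahori decomposition of $h^{(1)}$ \dots yields the required cancellations'' is not an argument, and the conjugate $j(g_v)^{-1}h^{(1)}j(g_v)$ really does have $\varpi$-denominators growing with $v$, so nothing you have written bounds $\xi_j$ of the translated lattice in \eqref{eq:tildemucodomain} uniformly in $v$. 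Likewise, the claimed reduction to the single inequality $\bigl|\boldsymbol{\mu}(t_{(p)})\boldsymbol{\nu}(t_{(p)}')\varpi^{jn(n+1)/2}\bigr|_p\le 1$ being ``forced by the criticality of $j$'' is neither proved nor the actual crux: one must control the image of the \emph{entire} rescaled translated lattice $\kappa^{-v}\cdot(h^{(1)}t_{(p)}^vd_{(x)},(t_{(p)}')^vd_{(x)})\cdot M_{\boldsymbol{\mu}\times\boldsymbol{\nu}}(\OO)$, i.e.\ all weight components simultaneously, not just a determinant character, and the asserted link between $j=j_{\min}$ and the $\fN(\f)$-factor is decoration rather than proof.

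The argument the paper uses avoids your obstacle entirely by bounding $\tilde{\mu}_\lambda$ itself, before applying $\xi$. Decompose $M_{\boldsymbol{\mu}\times\boldsymbol{\nu}}(E)=\bigoplus_\eta E_\eta$ into weight spaces; the torus element $a(v)=((t_{(p)})^vd_{(x)},(t_{(p)}')^vd_{(x)})$ acts on $E_\eta$ by the scalar $\eta(a(v))$, whose $p$-adic size is controlled by the highest-weight value $(\boldsymbol{\mu}\times\boldsymbol{\nu})(a(v))$. Combining this with \eqref{eq:tildemucodomain} and ordinarity \eqref{eq:lambdaordinarity} (which converts $\kappa^{-v}$ into $\tilde{\kappa}^{-v}\boldsymbol{\mu}(t_{(p)})^v\boldsymbol{\nu}(t_{(p)}')^v$ with $|\tilde{\kappa}|_p=1$, as you do), one finds that $\kappa^{-v}$ times the torus-translated lattice lies inside the single lattice $(h^{(1)},{\bf 1}_n)\cdot M_{\boldsymbol{\mu}\times\boldsymbol{\nu}}(\OO)$. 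The point is that the $h^{(1)}$-translation never has to be commuted past anything: it is a fixed, $v$-independent translation by an element of $\GL_{n+1}(\BZ)$, so the target lattice is independent of $v$, and boundedness of $\mu_\lambda=\xi\circ\tilde{\mu}_\lambda$ follows at once because $\xi$ is a fixed map defined over $\OO$. If you want to salvage your route, you would have to replace the hand-waved ``cancellation'' step by precisely this weight-space containment; as written, the proposal has a genuine gap at its central step.
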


\begin{proof}
We decompose $M_{\boldsymbol{\mu}\times\boldsymbol{\nu}}$ into weight spaces, i.e.
$$
M_{\boldsymbol{\mu}\times\boldsymbol{\nu}}(E)\;=\;
\bigoplus_{\eta\in X(\res_{k/\BQ}T_{n+1}\times T_n)} E_\eta,
$$
where $E_\eta$ denotes the weight space for the weight $\eta$. As $\boldsymbol{\mu}\times\boldsymbol{\nu}$ is the heighest weight for our choice of standard Borel $B=B_{n+1}\times B_n$, we know that on each $E_\eta$ with $E_\eta\neq 0$ the $p$-integral element
$$
a(v)\;:=\;((t_{(p)})^vd_{(x)},(t_{(p)}')^vd_{(x)}))
$$
acts as the scalar $\eta(a(v))\in E^\times$ with $p$-adic absolute value
$$
|\eta(a(v))|_p\;\leq\;
|{\boldsymbol{\mu}\times\boldsymbol{\nu}}(a(v))|_p.
$$
In particular we conclude with \eqref{eq:tildemucodomain} that, as $\tilde{\kappa}$ is a $p$-adic unit, 
$$
\tilde{\mu}_\lambda(x+(\varpi^{v}))\;\in\;
H^0(\Gamma_x^{(v)}; (h^{(1)},{\bf1}_n)\cdot M_{\boldsymbol{\mu}\times\boldsymbol{\nu}}(\OO)).
$$
The right hand side is a lattice independent of $v$.
\end{proof}

We remark that, in the case of positive finite slope, the same proof yields an explicit bound on the order of the resulting distribution.

\section{$p$-adic $L$-functions for Rankin-Selberg convolutions}\label{sec:padicl}

\subsection{The interpolation formula}
Let $\pi$ and $\sigma$ denote irreducible cuspidal regular algebraic automorphic representations as before. We assume they possess non-zero $K_{\mathfrak{p}}(m)$- resp.\ $K_{\mathfrak{p}}'(m)$-invariant vectors at all $\mathfrak{p}\mid p$. For each $\mathfrak{p}\mid p$ we consider the Hecke polynomial
$$
H_\mathfrak{p}(X):=\sum_{\nu=0}^{n+1} (-1)^\nu \absNorm(\mathfrak{p})^{\frac{(\nu-1)\nu}{2}}T_{\mathfrak{p},\nu} X^{n+1-\nu}
$$
in the Iwahori Hecke algebra. We choose $n$ Hecke roots
$$
\lambda_{\mathfrak{p},1},\dots,\lambda_{\mathfrak{p},n}\in E,
$$
for $\pi_\mathfrak{p}$, i.e.\ the collection of operators $H_\mathfrak{p}(\lambda_{\mathfrak{p},1}),\dots,H_{\mathfrak{p}}(\lambda_{\mathfrak{p},n})$ annihilates a non-zero vector $w_\mathfrak{p}^0$ in the Whittaker model $\mathscr{W}(\pi_\mathfrak{p},\psi_\mathfrak{p})$. Similarly we choose Hecke roots
$$
\lambda_{\mathfrak{p},1}',\dots,\lambda_{\mathfrak{p},n}'\in E,
$$
annihilating a vector
$$
0\neq v_\mathfrak{p}^0\in\mathscr{W}(\sigma_{\mathfrak{p}},\psi_\mathfrak{p}^{-1}).
$$
Note that we still choose $n$ roots on the smaller group. We set
$$
\underline{\lambda}_{\mathfrak{p}}:=
(\lambda_{\mathfrak{p},1},\dots,\lambda_{\mathfrak{p},n},
\lambda_{\mathfrak{p},1}',\dots,\lambda_{\mathfrak{p},n}')
\in E^{2n},
$$
and
$$
\kappa_{\underline{\lambda}_{\mathfrak{p}}}:=
\absNorm(\mathfrak{p})^{-\frac{(n+1)n(n-1)}{3}}\cdot
\left(
\prod_{\nu=1}^{n}
\lambda_{\mathfrak{p},\nu}^{n+1-\nu}
\right)
\cdot
\left(
\prod_{\nu=1}^{n}
\lambda_{\mathfrak{p},\nu}'^{n+1-\nu}
\right).
$$
Associated to this data we have the projection operator
$$
\Pi_{\underline{\lambda}_{\mathfrak{p}}}^0\;:=\;
\left(\prod_{i=1}^{n}
\prod_{\begin{subarray}cj=1\\j\neq i\end{subarray}}^{n+1}
(\lambda_i\absNorm(\mathfrak{p})^{1-j}T_{\mathfrak{p},j-1}-T_{\mathfrak{p},j})\right)
\otimes
\left(\prod_{i=1}^{n-1}
\prod_{\begin{subarray}cj=1\\j\neq i\end{subarray}}^{n}
(\lambda_i'\absNorm(\mathfrak{p})^{1-j}T_{\mathfrak{p},j-1}'-T_{\mathfrak{p},j}')\right)
$$

We call the tuple $(\pi,\sigma,(\underline{\lambda}_{\mathfrak{p}})_{\mathfrak{p}\mid p})$ {\em of finite slope at} $\mathfrak{p}$ if the following three conditions hold:
\begin{itemize}
\item[(i)]
$T_{\mathfrak{p},n}'$ acts on $\sigma_\mathfrak{p}$ via the scalar
\begin{equation}
\eta_{n}=\absNorm(\mathfrak{p})^{-\frac{n(n-1)}{2}}
\cdot
\prod_{\nu=1}^{n}\lambda_\nu'.
\label{eq:zentralscalar}
\end{equation}
\item[(ii)]
The vectors $w_\mathfrak{p}^0$ and $v_\mathfrak{p}^0$ may be chosen in such a way that
$$
\Pi_{\underline{\lambda}}^0(
w_\mathfrak{p}^0\otimes v_\mathfrak{p}^0)({\bf1}_{n+1},{\bf1}_{n})=
\prod_{\nu=1}^{n}\left(1-\absNorm(\mathfrak{p})^{-\nu}\right).
$$
\item[(iii)] The {\em slope}
$$
v_\mathfrak{p}\left(
\frac{\kappa_{\underline{\lambda}_{\mathfrak{p}}}}
{\boldsymbol{\mu}(a_{(\mathfrak{p})})\times\boldsymbol{\nu}(a_{(\mathfrak{p})}')}
\right)\in\BQ\cup\{\infty\},
$$
is finite.
\end{itemize}
If in addition the slope is $0$, we call the datum {\em ordinary} at $\mathfrak{p}$.

Assuming that our Whittaker vectors satisfy condition (ii), we set for each $\mathfrak{p}\mid p$,
$$
t_\mathfrak{p}:=\Pi_{\underline{\lambda}_\mathfrak{p}}^0(w_\mathfrak{p}^0\otimes v_\mathfrak{p}^0).
$$
The following Theorem is the main result towards the interpolation formula.
\begin{thm}\label{thm:localbirch}\cite[Theorem 2.3]{Jan14}
For all characters $\chi_{\mathfrak{p}}:k_{\mathfrak{p}}^\times\to\BC^\times$ with non-trivial conductor $ \mathfrak{f}_{\chi}=\OO_\mathfrak{p}\cdot f_{\chi}$, and all $0\neq f\in\OO_{\mathfrak{p}}$ with $\varpi^m\mid f$ and $f_{\chi}\mid f$ and all $s\in\BC$ we have
$$
\int\limits_{U_{n}(k_{\mathfrak{p}})\backslash{}\GL_{n}(k_{\mathfrak{p}})}
t_{\mathfrak{p}}\left(
j(g)\cdot t_{(ff_\chi^{-1})}
\cdot h^{(f)}
,
g\cdot ff_\chi^{-1}t_{(ff_\chi^{-1})}
\right)
\chi(\det(g))
|\det(g)|_{\mathfrak{p}}^{s-\frac{1}{2}}dg=
$$
$$
\prod_{\nu=1}^{n}
\left({1-\absNorm(\mathfrak{p})^{-\nu}}\right)^{-1}
\cdot
\absNorm(\mathfrak{f})^{-\frac{(n+1)n(n-1)}{6}}\cdot
\absNorm(\mathfrak{f}_\chi)^{-\frac{n(n+1)}{2}}\cdot
(\chi(f_\chi)G(\chi))^{\frac{n(n+1)}{2}}\cdot
$$
$$
t_{\mathfrak{p}}(t_{(ff_\chi^{-1})},
ff_\chi^{-1}\cdot t_{(ff_\chi^{-1})}).
$$
\end{thm}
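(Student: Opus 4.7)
Since the statement is imported verbatim from \cite[Theorem 2.3]{Jan14}, my plan is to retrace the strategy of that local computation, emphasising the steps where the present notation differs. The starting point is to unwind the definition of $t_\mathfrak{p}=\Pi_{\underline{\lambda}_\mathfrak{p}}^0(w_\mathfrak{p}^0\otimes v_\mathfrak{p}^0)$: by construction, $w_\mathfrak{p}^0$ and $v_\mathfrak{p}^0$ are Iwahori-fixed Whittaker vectors, and $\Pi_{\underline{\lambda}_\mathfrak{p}}^0$ is a polynomial in the $T_{\mathfrak{p},\nu}$ and $T_{\mathfrak{p},\nu}'$ that isolates the joint eigenspace with eigenvalues determined by the chosen Hecke roots $\lambda_{\mathfrak{p},i},\lambda_{\mathfrak{p},i}'$. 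In particular, on the diagonal torus the values of $t_\mathfrak{p}$ are controlled by the Casselman--Shalika--type recursion for Iwahori-fixed vectors in terms of the $\lambda_{\mathfrak{p},i}$.

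The core of the computation is the Iwasawa decomposition $g=uak$ with $u\in U_n(k_\mathfrak{p})$, $a$ in the diagonal torus of $\GL_n(k_\mathfrak{p})$ and $k$ in the Iwahori $K_\mathfrak{p}'(m)$. Using the Whittaker equivariance and the fact that $t_\mathfrak{p}$ is Iwahori-invariant, the integral collapses to a sum over a lattice of torus elements. The matrix $h^{(f)}=t_{(f)}^{-1}h^{(1)}t_{(f)}$ plays the role of an Atkin--Lehner twist: conjugation by $t_{(f)}$ moves the support of the integrand into the cell where Iwahori-fixed Whittaker functions are explicitly computable, and the outer translation by $t_{(ff_\chi^{-1})}$ together with $ff_\chi^{-1}t_{(ff_\chi^{-1})}$ on the second factor shifts everything so that the character $\chi$ is detected only through its values on $(\OO_\mathfrak{p}/\mathfrak{f}_\chi)^\times$-translates of the diagonal entries.

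Next I would split the resulting sum according to the valuation pattern of the torus element on the one hand, and the residue class of the Iwahori part modulo $\mathfrak{f}_\chi$ on the other. The first bookkeeping yields, after applying the Hecke-eigenvalue relations satisfied by $t_\mathfrak{p}$, a geometric series whose summation produces the Euler factor $\prod_{\nu=1}^{n}(1-\mathfrak{N}(\mathfrak{p})^{-\nu})^{-1}$, with the central-character identity \eqref{eq:zentralscalar} forcing the product over the second factor to collapse. The second bookkeeping isolates the character twist: the integral over residues modulo $\mathfrak{f}_\chi$ of each diagonal slot yields a Gauss sum $\chi(f_\chi)G(\chi)$, and there are precisely $n(n+1)/2$ such slots surviving after the Iwahori reduction, giving the exponent on the Gauss sum. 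The factors $\mathfrak{N}(\mathfrak{f})^{-(n+1)n(n-1)/6}$ and $\mathfrak{N}(\mathfrak{f}_\chi)^{-n(n+1)/2}$ emerge as the Jacobians/normalisations of the various change-of-variable substitutions $g\mapsto g\cdot ff_\chi^{-1}t_{(ff_\chi^{-1})}$ combined with the normalisation $|\det g|_\mathfrak{p}^{s-1/2}$.

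The step I expect to be the main obstacle is the careful tracking of all the normalisation constants: the interaction between the auxiliary $\mathfrak{f}$ (which may strictly contain $\mathfrak{f}_\chi$) and the conductor itself, the Haar measure normalisations on $U_n(k_\mathfrak{p})\backslash\GL_n(k_\mathfrak{p})$, and the correct powers of $\mathfrak{N}(\mathfrak{p})$ coming out of the Hecke polynomial $H_\mathfrak{p}(X)$. The remaining verification that the right-hand side does not depend on $s$ (hidden in the fact that the local integral is entire at $s=\tfrac{1}{2}+j$ for these test vectors) is a consequence of the fact that $t_\mathfrak{p}$ is supported, after the twist by $h^{(f)}$, on a set where $|\det g|_\mathfrak{p}$ is bounded, so the $s$-dependence drops out when the whole integrand is paired against the final evaluation $t_{\mathfrak{p}}(t_{(ff_\chi^{-1})},ff_\chi^{-1}\cdot t_{(ff_\chi^{-1})})$. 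The detailed verification of these cancellations is carried out in \cite[\S 2]{Jan14}, to which I would refer for the remaining book-keeping.
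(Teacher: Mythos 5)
The paper itself gives no argument for this statement: it is imported verbatim, with proof, from \cite[Theorem 2.3]{Jan14}, so your decision to defer the detailed bookkeeping to \cite[\S 2]{Jan14} is consistent with what the paper does. The overall skeleton you describe (Whittaker/Iwahori reduction via Iwasawa decomposition, the twist by $h^{(f)}=t_{(f)}^{-1}h^{(1)}t_{(f)}$ feeding additive characters into the integrand, Gauss sums from the interaction with $\chi(\det g)$, and normalisation constants from the various translations) is the right one.

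However, the mechanism you propose for the two key features of the formula is not the one that can work, and as described it would contradict the statement you are proving. You assert that summing a geometric series over valuation patterns, ``after applying the Hecke-eigenvalue relations satisfied by $t_\mathfrak{p}$'', produces the factor $\prod_{\nu=1}^{n}(1-\absNorm(\mathfrak{p})^{-\nu})^{-1}$, and that the $s$-independence of the right-hand side comes from $|\det g|_\mathfrak{p}$ being merely \emph{bounded} on the support. If a genuine geometric series over torus cosets survived, its sum would be an $L$-factor-like expression in $\absNorm(\mathfrak{p})^{-s}$ and the Hecke roots $\lambda_{\mathfrak{p},i},\lambda'_{\mathfrak{p},i}$ --- but the right-hand side of the theorem contains neither $s$ nor the Hecke roots (they enter only through the single evaluation $t_{\mathfrak{p}}(t_{(ff_\chi^{-1})},ff_\chi^{-1}t_{(ff_\chi^{-1})})$), and boundedness of $|\det g|_\mathfrak{p}$ alone would still leave $s$-dependence if several shells contributed. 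The actual engine of the local Birch lemma is support collapse: because $\chi_\mathfrak{p}$ has non-trivial conductor dividing $f$, the additive characters $\psi$ introduced by the column of $h^{(1)}$ (conjugated by $t_{(f)}$) pair against $\chi(\det g)$, and orthogonality of characters annihilates every torus coset except one; this single surviving coset simultaneously produces the Gauss sums $(\chi(f_\chi)G(\chi))^{n(n+1)/2}$ (with exponent $1+2+\cdots+n$ coming from the weighted contributions of the rows, not from ``$n(n+1)/2$ diagonal slots''), pins $|\det g|_\mathfrak{p}$ to a single value so that the $s$-dependence disappears, and forces the conductor powers $\absNorm(\mathfrak{f})^{-(n+1)n(n-1)/6}\absNorm(\mathfrak{f}_\chi)^{-n(n+1)/2}$. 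The factor $\prod_{\nu=1}^{n}(1-\absNorm(\mathfrak{p})^{-\nu})^{-1}$ is then a Haar-measure/volume constant tied to the normalisation (ii) of $\Pi^0_{\underline{\lambda}_\mathfrak{p}}(w^0_\mathfrak{p}\otimes v^0_\mathfrak{p})$, not a summed Euler product. Without this collapse step your outline cannot reproduce the stated right-hand side, so the sketch as written has a genuine gap even granting the reference for the remaining computations.
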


At all finite places $\mathfrak{q}\nmid p$ we choose a good tensor
$$
t_{\mathfrak{q}}\in
\mathscr{W}(\pi_\mathfrak{q},\psi_\mathfrak{q})\otimes_\BC
\mathscr{W}(\sigma_\mathfrak{q},\psi_\mathfrak{q}^{-1}),
$$
i.e.\ a vector such that the local zeta integral computes the local $L$-function, i.e.\
$$
\int_{U_n(k_\mathfrak{q})\backslash\GL_n(k_\mathfrak{q})}
t_{\mathfrak{q}}((j\times 1)(g))|\det(g)|_\mathfrak{q}^{s-\frac{1}{2}}dg\;=\;L(s,\pi_\mathfrak{q}\times\sigma_\mathfrak{q}).
$$
At infinity, we choose
$$
t_\infty^\pm\;\in\;
\mathscr{W}(\pi_\infty,\psi_\infty)\widehat{\otimes}
\mathscr{W}(\sigma_\infty,\psi_\infty^{-1}),
$$
such that for each character $\varepsilon:\pi_0(C)\to\BC^\times$, the global cohomology class
$$
\lambda^\varepsilon\;\in\;
H^{b_{n+1}^k+b_n^k}_{\rm c}(
\mathscr X_{n+1}^{\rm ad}(K)\times\mathscr X_{n}^{\rm ad}(K'); 
\underline{M}_{\boldsymbol{\mu}\times\boldsymbol{\nu}}(\BC))
$$
associated to the global tensor
$$
t^\varepsilon:=
t_\infty^{\varepsilon}\otimes\left(\otimes_{v\nmid\infty} t_v\right)\;\in\;
\mathscr{W}(\pi,\psi)\widehat{\otimes}
\mathscr{W}(\sigma,\psi^{-1}),
$$
lies $p$-adically maximally in the $\varepsilon$-eigen space of the the image of
$$
H^{b_{n+1}^k+b_n^k}_{\rm c}(
\mathscr X_{n+1}^{\rm ad}(K)\times\mathscr X_{n}^{\rm ad}(K'); 
\underline{M}_{\boldsymbol{\mu}\times\boldsymbol{\nu}}(\OO_{E,p})).
$$
By construction we know that $t_\infty$ eventually lies in the subspace of $K\times K'$-finite vectors. For details about the construction of such cohomology classes we refer to \cite[section 3]{Jan14}. We set
$$
t:=\sum_{\varepsilon}t^\varepsilon,
$$
which has as associated cohomology class
$$
\lambda\;:=\;\sum_{\varepsilon}\lambda^\varepsilon.
$$
It is an eigen vector of each $U_{\mathfrak{p}}$ with eigen value $\kappa_{\underline{\lambda}_\mathfrak{p}}$ (cf.\ \cite[Proposition 1.3]{Jan14}). Then the associated distribution $\mu_\lambda$ computes the special values of the twisted Rankin-Selberg $L$-function, i.e.\ we have

\begin{thm}\label{thm:interpolation}
Assume that $(\pi,\sigma,\underline{\lambda})$ is of finite slope. Then there is a entire complex analytic function $\Omega_t(s)$ such that for any character $\chi:k^\times\backslash\BA_k^\times\to\overline{\BQ}^\times$ of finite order with conductor $\mathfrak{f}_\chi\mid p^\infty$, we have the interpolation formula
$$
\int\limits_{C(p^\infty)}
\chi d\mu_{\lambda}\;=\;
$$
$$
\left(
\Omega_t(\frac{1}{2}+j)\cdot
c(\chi,\frac{1}{2}+j)\cdot
L^{(p)}(\frac{1}{2}+j,(\pi\times\sigma)\otimes\chi)
\right)_{j\,\mathrm{critical\,for}\,\boldsymbol{\mu}\times\boldsymbol{\nu}},
$$
with $c(\chi,-)$ as in Theorem \ref{main:padicl}.
\end{thm}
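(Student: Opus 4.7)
The plan is to unfold the integral $\int_{C(p^\infty)}\chi\,d\mu_\lambda$ all the way down to a global Rankin--Selberg zeta integral for $(\pi,\sigma)\otimes\chi$, and then to factor that zeta integral as an Euler product whose local factors reassemble into the right-hand side of the claimed interpolation formula.

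Fix a critical $j$ and a finite-order $\chi$ with conductor $\mathfrak f_\chi\mid(\varpi^v)$ for $v$ sufficiently large. Since $\chi$ factors through $C(K(\varpi^v))$, Corollary \ref{cor:distribution} together with the definition of $\tilde\mu_\lambda$ rewrites
\[
\int_{C(p^\infty)}\!\!\chi\,d\mu_\lambda
\;=\;\sum_{x\in C(K(\varpi^v))}\chi(x)\,\xi_j\bigl(\tilde\mu_\lambda(x+(\varpi^v))\bigr)
\;=\;\kappa_{\underline\lambda}^{-v}\sum_x\chi(x)\,\xi_j\circ\mathscr P_{h^{(1)}t_{(p)}^v,(t_{(p)}')^v,x}^{K,K'}(\lambda),
\]
where $\kappa_{\underline\lambda}=\prod_{\mathfrak p\mid p}\kappa_{\underline\lambda_{\mathfrak p}}$. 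Using Proposition \ref{prop:modularsymbol2} and the translation invariance \eqref{eq:translationinvariance} of the fundamental classes, the sum over components $x$ can be assembled into a single topological integral over $\mathscr X_n(K(\varpi^v))$ of the pullback of $\lambda$, against a $\chi\circ\det$ twist.

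Next I invoke the de Rham/automorphic identification of the cohomology class $\lambda$ with the Whittaker vector $t$, as developed in Section 3.5 of \cite{Jan14}: the topological integral equals the global Rankin--Selberg zeta integral for $t$ shifted by $j(t_{(p)}^v)$ and $(t_{(p)}')^v$ in the two factors. The key algebraic identity $h^{(f)}=t_{(f)}^{-1}h^{(1)}t_{(f)}$ absorbs the shift at each ramified $\mathfrak p$ into an insertion of the magic matrix $h^{(f_{\chi_{\mathfrak p}})}$, while at unramified $\mathfrak p\mid p$ the shift survives as the $t_{(\mathfrak p)}$ appearing in the definition of $I_{\mathfrak p}$. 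By uniqueness and multiplicativity of Whittaker models the global integral factors as a product over all places. At finite $\mathfrak q\nmid p\infty$ the good tensor $t_{\mathfrak q}$ produces $L(s_{\mathrm{crit}},\pi_{\mathfrak q}\times\sigma_{\mathfrak q}\otimes\chi_{\mathfrak q})$, and the product over such $\mathfrak q$ yields $L^{(p)}(s_{\mathrm{crit}},(\pi\times\sigma)\otimes\chi)$. At $\mathfrak p\mid p$ with $\chi_{\mathfrak p}$ unramified the local integral is by definition $I_{\mathfrak p}(\pi,\sigma,t_{\mathfrak p},\chi,s_{\mathrm{crit}})$; at $\mathfrak p\mid p$ with $\chi_{\mathfrak p}$ ramified, Theorem \ref{thm:localbirch} evaluates the local integral explicitly, producing the Gauss sum factor $G(\chi_{\mathfrak p})^{n(n+1)/2}$ together with precisely the power of $\kappa_{\mathfrak p}$ that cancels the corresponding factor in $\kappa_{\underline\lambda}^{-v}$ to leave $\kappa_{\mathfrak p}^{v_{\mathfrak p}(\mathfrak f_{\chi_{\mathfrak p}})}$ and the announced power of $\mathfrak N(\mathfrak f_{\chi_{\mathfrak p}})$. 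The remaining archimedean factor at $s=\tfrac12+j$ is taken as the definition of $\Omega_t(\tfrac12+j)$, and its holomorphicity in $s$ follows from standard convergence properties of archimedean Rankin--Selberg integrals together with the smoothness of $t_\infty$.

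The main obstacle is the bookkeeping in the middle step: one must simultaneously (i) reconcile the sum over components $x\in C(K(\varpi^v))$ with the $\chi$-twist of a single global automorphic integral, using the covering map \eqref{eq:Cfcover} and the identification \eqref{eq:symmetriccomponent}; (ii) convert the shift by $t_{(p)}^v$ into the insertion of the magic matrix $h^{(f_\chi)}$ at the ramified places via the conjugation identity above; and (iii) track the fundamental-class normalisations across connected components using the compatibility relations \eqref{eq:translationinvariance} and \eqref{eq:restrictioninvariance}. A subtlety specific to the number-field setting, as opposed to the totally real case of \cite{Jan14}, is that $\xi_j$ need not be injective on $H^0(\Gamma_n;M_{\boldsymbol{\mu}\times\boldsymbol{\nu}}(E))$ when $k$ has complex places, so one must separately verify that the projection $\xi_j$ commutes with the zeta integration; this follows from the $G_n$-equivariance of $\xi_j$ and the fact that $\Gamma_n\supseteq G_n^{\mathrm{der}}$ (Lemma \ref{lem:gammannormal}).
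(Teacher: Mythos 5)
Your proposal is correct and takes essentially the same route as the paper: the paper's own proof is a direct reference to the proof of Theorem 4.5 of \cite{Jan14}, whose strategy is exactly what you spell out — unfold $\int\chi\,d\mu_\lambda$ over the components of $C(K(\varpi^v))$, identify the modular symbol with the global Rankin--Selberg integral, factor over places, and use Theorem \ref{thm:localbirch} at the ramified $\mathfrak{p}\mid p$ to produce $c(\chi_{\mathfrak{p}},-)$, with the archimedean integral defining $\Omega_t$. Your added remark on $\xi_j$ in the presence of complex places matches the paper's treatment (Zariski density of $\Gamma_c$ in $\Gamma_n$ and $G_n$-equivariance of $\xi_j$), so no genuinely different argument is involved.
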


We will discuss of the non-vanishing of the periods $\Omega_t(\frac{1}{2}+j)$ in section \ref{sec:cohreps}.

As already mentioned in section \ref{sec:maintheorem}, and as will be clear from its proof, Theorem \ref{thm:interpolation} generalizes to arbitrary finite order characters of $C(K(p^\infty))$ via \eqref{eq:Cfcover}, up to the computation of the Euler factors at the finite places $v\nmid p$ where $\det(K)\neq\OO_v^\times$. We remark that in the ordinary case the $p$-adic measure is uniquely determined by the evaluation at sufficiently ramified characters (cf.\ \cite{Jan14}).

\begin{proof}
The proof proceeds as the proof of Theorem 4.5 in \cite{Jan14}. The main ingredient being Theorem \ref{thm:localbirch} for the computation of $c(\chi_{\mathfrak{p}},-)$ in the ramified case. The computation of the unramified Euler factor for $n=1$ is standard.
\end{proof}

\subsection{The functional equation}

We have the twisted main involution
$$
\iota:g\mapsto w_{n}g^{-{\rm t}}w_{n}
$$
of $\GL_{n}$, where the supscript $-{\rm t}$ denotes matrix inversion composed with transpose, yielding an outer automorphism of $\GL_{n}$ order $2$. Let $\mathcal M$ be rational $\GL_{n}$-representation. We identify the contragredient $\mathcal M^\vee$ with the pullback of $\mathcal M$ along $\iota$. In particular it induces a twisted map
$$
\cdot^\vee:\;\;\;\mathcal M\to\mathcal M^\vee.
$$
This notion stabilizes our Hecke algebras, thus descends to Hecke modules $\mathcal M$, and gives the relation
\begin{equation}
\left({T_{\mathfrak{p},\nu}'}m\right)^\vee\;=\;
T_{\mathfrak{p},n}'\left({T_{\mathfrak{p},n-\nu}'}m\right)^\vee,\;\;\;m\in\mathcal M.
\label{eq:contragredienthecke}
\end{equation}

Now assume we are in the finite slope setting of the previous section. Then we have $n$ invertible Hecke roots $\lambda_1',\dots,\lambda_n'\in E^\times$. A consequence of relation \eqref{eq:contragredienthecke} is the fact that the map
$$
\lambda_{\mathfrak{p},i}\;\mapsto\;\lambda_{\mathfrak{p},n+1-i}^\vee:=\absNorm(\mathfrak{p})^n\lambda_{\mathfrak{p},i}^{-1}
$$
sets up a bijective correspondence between Hecke roots associated to
$$
v_{\mathfrak{p}}^0\;\in\;\mathscr{W}(\sigma_{\mathfrak{p}},\psi_\mathfrak{p}^{-1})
$$
and the Hecke roots associated to its dual vector
$$
(v_{\mathfrak{p}}^0)^\vee\;\in\;\mathscr{W}(\sigma_{\mathfrak{p}}^\vee,\psi_\mathfrak{p}),
$$
cf.\ Proposition 5.1. This relation is compatible with \eqref{eq:zentralscalar} in the sense that $\eta_n^\vee=\eta_n^{-1}$.

The same statements are true for $\pi_{\mathfrak{p}}$, if we also take into account the last omitted root $\lambda_{n+1}$. If our initial datum is of finite slope, then $\lambda_{n+1}\neq 0$ as the analog of relation \eqref{eq:zentralscalar} is valid as well. Therefore we may define
$$
\underline{\lambda}_{\mathfrak{p}}^\vee\;:=\;
(\lambda_{\mathfrak{p},1}^\vee,\dots,\lambda_{\mathfrak{p},n}^\vee,
{\lambda_{\mathfrak{p},1}'}^\vee,\dots,{\lambda_{\mathfrak{p},n}'}^\vee)
\in E^{2n},
$$
and the contragredient datum $(\pi^\vee,\sigma^\vee,(\underline{\lambda}_{\mathfrak{p}}^\vee)_{\mathfrak{p}\mid p})$ is of finite slope again, and we have the dual cohomology class
$$
\lambda^\vee
\;\in\;
H^{b_{n+1}^k+b_n^k}_{\rm c}(
\mathscr X_{n+1}^{\rm ad}(K)\times \mathscr X_{n}^{\rm ad}(K');
\underline{M}_{\boldsymbol{\mu}^\vee\times\boldsymbol{\nu}^\vee}(\BC)),
$$
which is again $p$-integral whenever $\lambda$ is, and is an eigen vector for $U_\mathfrak{p}$ with eigen value $\kappa_{\underline{\lambda}^\vee}\in E^\times$.

Now the map
$$
\cdot^\vee:\;\;\;\Gm(\BA_k^{(\infty)})\to\Gm(\BA_k^{(\infty)}),
$$
$$
x\;\;\mapsto\;\; x^\vee:=(-1)^{n}x^{-1},
$$
where the $(-1)^{n}$ occurs only in the $\mathfrak{p}$-components for $\mathfrak{p}\mid p$, induces an involution
$$
\cdot^\vee:C(K(p^\infty))\to C(K(p^\infty)),
$$
and also an involution $\cdot^\vee$ on $C(p^\infty)$, which commutes with the covering map \eqref{eq:Cfcover}.

\begin{thm}\label{thm:functionalequation}
We have the functional equation
$$
(\mu_\lambda(x))^\vee=
\mu_{\lambda^\vee}(x^{\vee}),
$$
i.e.\ we have explicitly
\begin{equation}
(\xi_j(\tilde{\mu}_\lambda(x)))^\vee=
\xi_{-j}(\tilde{\mu}_{\lambda^\vee}(x^{\vee})).
\label{eq:explicitfunctional}
\end{equation}
\end{thm}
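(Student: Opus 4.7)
The plan is to apply $\iota \times \iota$ to each ingredient of the definition
\[
\tilde{\mu}_\lambda(x+(\varpi^v)) \;=\; \kappa_{\underline{\lambda}}^{-v} \cdot \mathscr{P}_{h^{(1)} t_{(p)}^v,(t_{(p)}')^v,x}^{K,K'}(\lambda)
\]
and to verify that the outcome coincides with $\tilde{\mu}_{\lambda^\vee}(x^\vee + (\varpi^v))$. Three compatibilities need to be checked separately: the action of $\iota^*$ on coefficients, its action on the matrix data together with the index $x$, and its action on the Hecke eigenvalue.

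For the coefficients, $\iota^*$ identifies $M_{\boldsymbol{\mu} \times \boldsymbol{\nu}}$ with $M_{\boldsymbol{\mu}^\vee \times \boldsymbol{\nu}^\vee}$ by construction, and because $\iota$ inverts the determinant character, the $G_n$-equivariant projection $\xi_j$ corresponds under $\iota^*$ to $\xi_{-j}$; this accounts for the switch $j \mapsto -j$ in \eqref{eq:explicitfunctional}. For the matrices, direct diagonal computation gives $\iota(t_{(p)}) = \varpi^{-n} t_{(p)}$ in $\GL_{n+1}$ and $\iota(t_{(p)}') = \varpi^{-(n+1)} t_{(p)}'$ in $\GL_n$, and one further verifies that $\iota(h^{(1)})$ differs from $h^{(1)}$ by an element of the Iwahori $K_p$ times a central scalar. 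Invoking Propositions \ref{prop:modularsymbol1} and \ref{prop:modularsymbol2}, we can absorb all these central corrections into a shift of the index $c$; the cumulative shift is computed to be exactly $x \mapsto (-1)^n x^{-1} = x^\vee$. For the Hecke eigenvalue, relation \eqref{eq:contragredienthecke} together with the definition $\lambda_{\mathfrak{p},i}^\vee := \absNorm(\mathfrak{p})^n \lambda_{\mathfrak{p},i}^{-1}$ implies that $\iota$ transforms a $U_\mathfrak{p}$-eigenvector with eigenvalue $\kappa_{\underline{\lambda}_\mathfrak{p}}$ into a $U_\mathfrak{p}$-eigenvector for $\lambda^\vee$ with eigenvalue $\kappa_{\underline{\lambda}^\vee_\mathfrak{p}}$, so the normalizing factor $\kappa_{\underline{\lambda}}^{-v}$ becomes $\kappa_{\underline{\lambda}^\vee}^{-v}$, matching the corresponding factor in $\tilde{\mu}_{\lambda^\vee}$. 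Assembling these three compatibilities gives $\iota^*(\tilde{\mu}_\lambda(x)) = \tilde{\mu}_{\lambda^\vee}(x^\vee)$, and applying $\xi_j$ on the left, which matches $\xi_{-j}$ on the right under the coefficient duality, yields \eqref{eq:explicitfunctional}.

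The hardest step is the explicit computation of $\iota(h^{(1)})$, which is not diagonal: writing $h^{(1)} = \begin{pmatrix} w_n & \mathbf{1} \\ 0 & 1 \end{pmatrix}$ one obtains $\iota(h^{(1)}) = \begin{pmatrix} 1 & -\mathbf{1}^t \\ 0 & w_n \end{pmatrix}$, which is adapted to the opposite embedding $j' : g \mapsto \mathrm{diag}(1,g)$ rather than to $j$. Resolving this discrepancy requires conjugation by an explicit element of $\GL_{n+1}(\BZ)$ intertwining $j$ and $j'$, whose determinant $(-1)^n$ combines with the central contributions from $\iota(t_{(p)})$ and $\iota(t_{(p)}')$ to produce precisely the sign in $x^\vee = (-1)^n x^{-1}$. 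Once this sign-tracking calculation is carried through, the coefficient duality and the Hecke eigenvalue compatibility are essentially formal.
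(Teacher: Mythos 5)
Your overall strategy -- push the whole construction through the twisted involution $\iota$ and track its effect on the matrices, the class index, the coefficients and the Hecke eigenvalue -- is the natural one, and it is in the same spirit as the argument the paper invokes (the proof of Theorem 5.4 of \cite{Jan14}, cited mutatis mutandis). Your diagonal computations $\iota(t_{(p)})=\varpi^{-n}t_{(p)}$, $\iota(t_{(p)}')=\varpi^{-(n+1)}t_{(p)}'$ are correct, as is the observation that $\iota_{n+1}\circ j$ lands in the opposite embedding and that the intertwining permutation has determinant $(-1)^n$. Nevertheless there are two genuine gaps. First, the step where you ``apply $\iota\times\iota$'' to $\mathscr P^{K,K'}_{h,h',c}(\lambda)$ is not covered by Propositions \ref{prop:modularsymbol1} and \ref{prop:modularsymbol2}: those only treat translations by elements of $G_{n+1}(\BA_\BQ^{(\infty)})\times G_n(\BA_\BQ^{(\infty)})$, whereas $\iota$ is an outer automorphism which acts on the locally symmetric spaces themselves, permutes the levels $K\mapsto \iota(K)$, changes the coefficient sheaf via the identification $M_{\boldsymbol{\mu}\times\boldsymbol{\nu}}^\vee\cong\iota^*M_{\boldsymbol{\mu}\times\boldsymbol{\nu}}$, and acts on the cycle over which one integrates. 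One needs a separate compatibility statement for the modular symbol under $\iota$, including its effect on the chosen fundamental classes $F_{K,c}$ (an orientation sign must be controlled, analogously to \eqref{eq:translationinvariance}); without such a lemma the ``absorb everything into a shift of $c$'' step has nothing to stand on, and this lemma is precisely the non-formal core of the proof in the reference. Relatedly, your first paragraph's claim that $\iota(h^{(1)})$ differs from $h^{(1)}$ by an Iwahori element times a central scalar is not correct as stated, as your own final paragraph concedes; the two paragraphs should be reconciled before the sign $(-1)^n$ at the places above $p$ can be claimed as ``computed''.

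Second, even granting the geometric compatibility, your argument only shows a functional equation relating $\mu_\lambda$ to $\mu_{\iota^*\lambda}$. The theorem, however, involves $\mu_{\lambda^\vee}$, where $\lambda^\vee$ is the specific class the paper attaches to the contragredient datum $(\pi^\vee,\sigma^\vee,(\underline{\lambda}^\vee_{\mathfrak p})_{\mathfrak p\mid p})$ via the dual Whittaker vectors, good tensors at $\mathfrak q\nmid p$ and $p$-optimally normalized archimedean classes. Matching $U_{\mathfrak p}$-eigenvalues via \eqref{eq:contragredienthecke}, as you do, does not identify the class: you must show that the $\iota$-transform of $\lambda$ actually equals $\lambda^\vee$ (not merely up to an unspecified nonzero scalar), or else track the resulting constant, since the asserted identity $(\mu_\lambda(x))^\vee=\mu_{\lambda^\vee}(x^\vee)$ is an equality on the nose. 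Supplying this identification (compatibility of Whittaker duality, the local choices at $p$ and at infinity, and the integral normalization) together with the $\iota$-compatibility of the modular symbol would close the argument.
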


\begin{proof}
The proof proceeds mutatis mutandis as the proof of Theorem 5.4 in \cite{Jan14}.
\end{proof}

\section{On the non-vanishing hypothesis}\label{sec:cohreps}

In this section we show how to reduce the non-vanishing of the archimedean periods $\Omega_t(\frac{1}{2}+j)$ in Theorem \ref{thm:interpolation} to the local situations of $\GL_{n+1}(k_v)\times\GL_n(k_v)$ for $v\in S_\infty$. Therefore its proof reduces to a real and a complex case, which is treated in \cite{Sun}.

We intrduce some notation, and let
$$
G:=G_{n+1}(\BR)\times G_n(\BR)
$$
and similarly
$$
GK:=GK_{n+1}\times GK_n,
$$
and
$$
K:=K_{n+1}\times K_n.
$$
We let furthermore
$$
H:= (j\times 1)(G_n(\BR))\subseteq G,
$$
$$
C:=H\cap GK,
$$
via the diagonal embedding \eqref{eq:diagonalembedding}. Then
$$
C\cong K_{n}.
$$
As usual we use the same notation for the Lie algebras.

We set
$$
M_{\boldsymbol{\mu}\times\boldsymbol{\nu}}
:=
M_{\boldsymbol{\mu}}(\BC)\otimes_\BC M_{\boldsymbol{\nu}}(\BC).
$$
The completed projective tensor product
$$
\widehat{\pi}:=\pi_{\infty}^{\rm CW}\widehat{\otimes}\sigma_{\infty}^{\rm CW}
$$
of the associated Casselman-Wallach representations is itself a Casselman-Wallach representation of $G$, which is unitarizable and tempered (modulo center), and satisfies
$$
H^{\bullet}(\g,GK;
\widehat{\pi}\otimes
M_{\boldsymbol{\mu}\times\boldsymbol{\nu}})
\neq 0.
$$
We will see that this already determines $\widehat{\pi}$ uniquely up to isomorphism, and we have
\begin{equation}
H^{b_{n+1}^k+b_{n}^k}(\g,GK^0;
\widehat{\pi}\otimes
M_{\boldsymbol{\mu}\times\boldsymbol{\nu}})
\cong\BC[\pi_0(C)]
\label{eq:gkp0iso}
\end{equation}
as $\pi_0(C)$-modules. We will justify these statements below.

For each $j$ critical for ${\boldsymbol{\mu}\times\boldsymbol{\nu}}$ we have
$$
0\neq \xi_j\in\Hom_{H}(M_{\boldsymbol{\mu}\times\boldsymbol{\nu}},M_{(j)}(\BC))
$$
Since this is the same to say that $\frac{1}{2}+j$ is critical for $L(s,\pi\times\sigma)$, the archimedean Rankin-Selberg integrals computing $L(s,(\pi_\infty\times\sigma_\infty)\otimes\chi_\infty)$ are holomorphic at $\frac{1}{2}+j$, and produce non-zero continuous functionals
\begin{equation}\label{homiinfty}
  \phi_{\widehat{\pi}}^{\chi_\infty}\in \Hom_H(\widehat{\pi}, \chi_\infty\otimes\abs{\det}_\infty^{-j}).
\end{equation}
We remark that the finite order characters $\varepsilon$ of $H$ all factor over $\pi_0(H)=\pi_0(C)$ and thus may be considered as characters of the latter. Our goal is to prove

\begin{prp}\label{prop:periods}
For each character $\varepsilon$ of $\pi_0(H)$ the $H$-equivariant continuous linear functional
$$
\phi_{\widehat{\pi}}^\varepsilon\otimes\xi_j:\;\;\;
\widehat{\pi}\otimes
M_{\boldsymbol{\mu}\times\boldsymbol{\nu}}(\BC)
\to
\varepsilon\otimes\sgn^j
$$
induces on cohomology a non-zero $\pi_0(C)$-equivariant linear map
\begin{equation}\label{homhinfty}
   \oH^{b_{n+1}^{k}+b_{n}^{k}}(\g, GK^0; \widehat{\pi}\otimes M_{\boldsymbol{\mu}\times\boldsymbol{\nu}})\rightarrow
   \oH^{b_{n+1}^{k}+b_{n}^{k}}(\h, C^0; \varepsilon\otimes{\sgn}^j).
\end{equation}
\end{prp}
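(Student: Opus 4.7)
The plan is to reduce the global non-vanishing to a local non-vanishing statement at each archimedean place via a K\"unneth factorization, and then invoke Sun's theorem \cite{Sun} at each place.

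First, I would exploit the product structure of the archimedean groups. Writing $G = \prod_{v \in S_\infty} G_v$ with $G_v := \GL_{n+1}(k_v) \times \GL_n(k_v)$, and similarly factoring $GK^0$, $H$ and $C^0$ place-by-place, the Casselman-Wallach representation $\widehat{\pi}$ decomposes as a completed projective tensor product $\widehat{\bigotimes}_v \widehat{\pi}_v$, and $M_{\boldsymbol{\mu}\times\boldsymbol{\nu}} = \bigotimes_v M_{\mu_v\times\nu_v}$ by the very definition of weights on $\res_{k/\BQ}T_n$. The global functional $\phi^{\varepsilon}_{\widehat\pi}$ factors as $\bigotimes_v \phi^{\varepsilon_v}_{\widehat\pi_v}$ by the Euler product expansion of the archimedean Rankin-Selberg integral used in its definition \eqref{homiinfty}, and $\xi_j = \bigotimes_v \xi_{j,v}$ factors by the place-by-place structure of the diagonal $H$-equivariant projection on highest weight modules.

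Next, I would invoke the K\"unneth formula for relative Lie algebra cohomology, which, combined with \eqref{eq:ggkcohomology}, gives an isomorphism
$$
\oH^{b_{n+1}^k+b_n^k}(\g, GK^0; \widehat{\pi} \otimes M_{\boldsymbol{\mu}\times\boldsymbol{\nu}}) \;\cong\; \bigotimes_{v \in S_\infty} \oH^{b_v}(\g_v, GK_v^0; \widehat{\pi}_v \otimes M_{\mu_v\times\nu_v}),
$$
where $b_v := b_{n+1}^{k_v} + b_n^{k_v}$. The key point justifying this clean single-degree K\"unneth identification is that $b_v$ is simultaneously the bottom degree of the non-vanishing cohomology at $v$ (by the discussion following the definition of $b_n^k$), and that for regular algebraic, tempered-mod-center $\widehat\pi_v$ the local cohomology is concentrated in a single degree, yielding the stated \eqref{eq:gkp0iso}. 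The analogous factorization applies verbatim to the target $(\h, C^0)$-cohomology of the one-dimensional module $\varepsilon \otimes \sgn^j$, which breaks up as $\bigotimes_v (\varepsilon_v \otimes \sgn_v^j)$. Under these identifications the induced map on cohomology is the tensor product of local induced maps
$$
\oH^{b_v}(\g_v, GK_v^0; \widehat\pi_v \otimes M_{\mu_v \times \nu_v}) \;\to\; \oH^{b_v}(\h_v, C_v^0; \varepsilon_v \otimes \sgn_v^j),
$$
and the global map is non-zero if and only if each local map is non-zero. The $\pi_0(C)$-equivariance is automatic from $\pi_0(C) = \prod_v \pi_0(C_v)$.

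Finally, the non-vanishing of each local map is exactly the content of Sun's non-vanishing theorem \cite{Sun}: his archimedean result settles both the real case $k_v \cong \BR$ and the complex case $k_v \cong \BC$, which by the above reduction are the only two local situations one encounters. The principal obstacle is not the K\"unneth reduction itself, which is formal, but the verification that the local archimedean functional $\phi^{\varepsilon_v}_{\widehat\pi_v}$ that emerges from analytically continuing the local Rankin-Selberg zeta integral to $s_{\rm crit}$ coincides, up to a non-zero scalar, with the model functional whose cohomological non-vanishing Sun establishes. This matching relies on the archimedean multiplicity one of $\GL_{n+1}|\GL_n$-branching (which ensures both functionals lie in the same one-dimensional Hom-space and hence differ by a scalar) together with the holomorphy of the archimedean integrals at critical points guaranteed by the regularity of $\widehat\pi$, and one must be careful to track that the scalar is non-zero by explicitly exhibiting test vectors on which both functionals are simultaneously non-zero.
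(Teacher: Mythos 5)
Your proposal follows essentially the same route as the paper: factor $G$, $H$, $C$, $\widehat{\pi}$ and the coefficient module place by place, use Clozel's vanishing below the bottom degree $b_{n+1}^{k_v}+b_n^{k_v}$ to get a clean K\"unneth identification in the single degree $b_{n+1}^k+b_n^k$, observe that the Rankin--Selberg functional and $\xi_j$ factor accordingly so the induced map on cohomology is a tensor product of local maps, and then invoke Sun's archimedean non-vanishing theorem in the real and complex cases. Your writing the K\"unneth isomorphism as a tensor product over $v\in S_\infty$ is in fact the correct formulation (the paper's display \eqref{eq:kunneth} shows a direct sum, evidently a typo), and your closing concern about matching the analytically continued local integral with Sun's functional is already subsumed in Sun's result, which is stated directly for the Rankin--Selberg period on cohomology.
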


An easy calculation shows that
\[
\dim (\h/\c)=b_{n+1}^{k}+b_{n}^{k}.
\]
Therefore, Poincar\'{e} duality implies that the space of the right hand side of \eqref{homhinfty} is one-dimensional. It carries a representation of $\pi_0(C)$ which is isomorphic to $\varepsilon\otimes\sgn^j$. By \eqref{eq:gkp0iso} we know the structure of the left hand side as well. Thus Proposition \ref{prop:periods} provides us with a complete description of the Rankin-Selberg integrals on cohomology.

Proposition \ref{prop:periods} reduces to the local situation as follows. First we observe that
\begin{equation}
G\;=\;\prod_{v\in S_\infty}G_v
\label{eq:gdecomposition}
\end{equation}
with
$$
G_v=\GL_{n+1}(k_v)\times\GL_n(k_v),
$$
and similarly
$$
GK=\prod_{v\in S_\infty}GK_v.
$$
We have compatible decompositions
$$
H=\prod_{v\in S_\infty}H_v,\quad
H_v=\GL_n(k_v),
$$
and
$$
C=\prod_{v\in S_\infty} C_v,\quad
C_v\;=\;K_v,
$$
as in the notation section. We obtain induced decompositions of the associated complex Lie algebras, such as the Lie analog
$$
\g=\bigoplus_{v\in S_\infty} \g_v
$$
of \eqref{eq:gdecomposition}. Moreover
\begin{equation}
\pi_0(C)=\prod_{v\in S_\infty}\pi_0(C_v).
\label{eq:pi0cdecomposition}
\end{equation}
In particular any character $\varepsilon$ of $\pi_0(C)$ decomposes into a product of local characters $\varepsilon_v$, $v\mid\infty$. We have for each real place $v\in S_\infty$ a corresponding local sign character
\[
\sgn_v:=\det\,\abs{\det}_v^{-1},
\]
and defining $\sgn_v$
to be the trivial characters for complex places $v$, in particular
we get the obvious relation
\[
\sgn=\otimes_{v\mid\infty}\sgn_v.
\]

Furthemore, we have natural isomorphism
$$
\widehat{\pi}=\widehat{\otimes}_{v\in S_\infty}\widehat{\pi}_v,
\quad
\widehat{\pi}_v=\pi_v\widehat{\otimes}\sigma_v.
$$
Now by \cite[Lemma 3.14]{Clo}, we know that
$$
H^{b}(\g_v,GK_v^0; \widehat{\pi}_v\otimes M_{\mu_v\times\nu_v})=0
$$
for $b<b_{n+1}^{k_v}+b_n^{k_v}$. Therefore, the K\"unneth formula for relative Lie algebra cohomology provides us with an isomorphism
\begin{equation}
H^{b_{n+1}^k+b_{n}^k}(\g,GK^0;
\widehat{\pi}\otimes
M_{\boldsymbol{\mu}\times\boldsymbol{\nu}})
\cong
\bigoplus_{v\in S_\infty}
H^{b_{n+1}^{k_v}+b_{n}^{k_v}}(\g_v,GK_v^0;
\widehat{\pi}_v\otimes
M_{\mu_v\times\nu_v}).
\label{eq:kunneth}
\end{equation}

The local version of \eqref{eq:gkp0iso} reads
\begin{equation}
H^{b_{n+1}^{k_v}+b_{n}^{k_v}}(\g_v,GK_v^0;
\widehat{\pi}_v\otimes
M_{\mu_v\times\nu_v})
\cong\BC[\pi_0(C_v)]
\label{eq:gkp0isolocal}.
\end{equation}
In the sense of \eqref{eq:pi0cdecomposition}, the K\"unneth map \eqref{eq:kunneth} is an isomorphism of $\pi_0(C)$-modules, and is compatible with the isomorphisms \eqref{eq:gkp0iso} and \eqref{eq:gkp0isolocal}. Therefore \eqref{eq:gkp0isolocal} implies \eqref{eq:gkp0iso}.

Now let us justify the isomorphism \eqref{eq:gkp0isolocal}. For any place $v\in S_\infty$ we denote by $\Omega(\nu_v)$ the set of isomorphism classes of irreducible Casselman-Wallach representations $\rho_v$ of $\GL_{n}(k_v)$ such that $\rho_v|_{{\operatorname{SL}_n^{\pm}}(\k_v)}$ is unitarizable, tempered, and the relative Lie algebra cohomology
$$
\oH^\bullet(\g_v,GK_v^0;\rho_v\otimes M_{\nu_v})\neq 0
$$
is non-zero.

The set $\Omega(\nu_v)$ was determined for for complex $v$ by Enright in \cite{En} and for real $v$ by Speh in \cite{Sp} (see also \cite{VZ} for a general approach). In particular (cf.\ \cite[Section 3]{Clo}), for any place $v$
\begin{equation}\label{eq:omega}
  \#\Omega(\nu_v)=\left\{
                \begin{array}{ll}
                  0, & \hbox{if $\nu_v$ is not pure,} \\
                  1, & \hbox{if $\nu_v$ is pure, $v$ is real and $n$ is even or $v$ is complex and $n$ arbitrary} \\
                  2, & \hbox{if $\nu_v$ is pure, $v$ is real and $n$ is even.}
                \end{array}
              \right.
\end{equation}
Here \lq$\nu_v$ is pure\rq{} means (cf.\ \cite[Section 3]{Clo}),
\begin{equation}
  \nu_{\iota,1}+\nu_{\overline{\iota},n}=\nu_{\iota,2}+\nu_{\overline{\iota},n-1}=\cdots=\nu_{\iota,n}+\nu_{\overline{\iota},1},
\end{equation}
for $v=\{\iota,\overline{\iota}\}$ (a singleton whenever $v$ is real). This is the local equivalent to the prearithmeticity condition \eqref{eq:mduality} resp.\ \eqref{eq:mupurity} in section \ref{sec:arithmeticmodules}. In the third case of \eqref{eq:omega}, the two representations in $\Omega(\nu_v)$ are twists of each other by the sign character $\sgn_v$. In the second case of \eqref{eq:omega}, the only representation in $\Omega(\nu)$ is isomorphic to its twist by the sign character $\sgn_v$.

In the complex case the component group $\pi_0(C_v)$ is trivial and so is its action on the bottom degree $b^{k_v}$ of the relative Lie algebra cohomology of $\rho_v$, which is one-dimensional in this case. In the real case we have
$$
\BC[\pi_0(C_v)]={\bf 1}_v\oplus \sgn_v.
$$
An easy calculation shows that (cf.\ \cite[Equation (3.2)]{Mah} for example), as a representation of $\pi_0(C_v)$,
\begin{equation}\label{ohpi}
H^{b_n^{k_v}}(\g_v,GK_v^0;\rho_v\otimes M_{\nu_v})\cong \left\{
\begin{array}{ll}
{\bf1}_v\oplus \sgn_v, & \hbox{if $n$ is even,} \\
\sgn(\nu_v)\otimes\sgn_v^{\nu_{v,1}+\nu_{v,2}+\cdots +\nu_{v,n}}, & \hbox{if $n$ is odd.}
\end{array}
\right.
\end{equation}
Here $\sgn(\nu_v)$ is the character of $\pi_0(C_v)$ which is given by the action of the group
$$
\pm{\bf1}_{n}^{k_v}\;:=\;\pm{\bf1}_{n}\;\in\;\GL_n(k_v)\subseteq G_n(\BR)
$$
on the representation $M_{\nu_v}(\BC)$.
Therefore the slightly stricter condition \eqref{eq:pigkcoh} pins down $\widehat{\pi}$ uniquely. As already mentioned above, this also proves \eqref{eq:gkp0iso}.

Now observe that the archimedean Rankin-Selberg integrals
$$
\phi_{\widehat{\pi}}^\varepsilon(t_\infty)
=\int_{U_n(\BR)\backslash G_n(\BR)} t_\infty((j\times 1)(g))\varepsilon(\det(g))|\det(g)|_\infty^{s-\frac{1}{2}}dg
$$
naturally decompose into products of local integrals
\begin{equation}
\phi_{\widehat{\pi}_v}^\varepsilon(t_v)=
\int_{U_n(k_v)\backslash G_n(k_v)} t_v((j\times 1)(g))\varepsilon_v(\det(g))|\det(g)|_v^{s-\frac{1}{2}}dg,\;\;\;v\in S_\infty,
\label{eq:localrankin}
\end{equation}
whenever
$$
t_\infty\;\in\;
\mathscr W(\pi_\infty,\psi_\infty)\widehat{\otimes}
\mathscr W(\sigma_\infty,\psi_\infty^{-1})
$$
is a pure tensor with local components $t_v$.

By the K\"unneth isomorphism \eqref{eq:kunneth} our choice of cohomological test vector $t_\infty$ satisfies this condition. Therefore the non-vanishing of the complex periods $\Omega_t(\frac{1}{2}+j)$ from Theorem \ref{thm:interpolation} reduces to the question whether the local integrals \eqref{eq:localrankin} are non-zero for $s=\frac{1}{2}+j$. This proves Proposition \ref{prop:periods}.

\end{document}